\theoremstyle{plain}
\newtheorem{theorem}{Theorem}
\newtheorem{lemma}[theorem]{Lemma}
\newtheorem{proposition}[theorem]{Proposition}
\newtheorem{corollary}[theorem]{Corollary}
\newtheorem*{proposition*}{Proposition}
\newtheorem*{lemma*}{Lemma}
\theoremstyle{definition}
\newtheorem{definition}[theorem]{Definition}
\newtheorem{example}[theorem]{Example}
\newtheorem{remark}[theorem]{Remark}
\theoremstyle{remark}
\newcommand{\C}{\mathbb{C}}
\newcommand{\N}{\mathbb{N}}
\newcommand{\Z}{\mathbb{Z}}
\renewcommand{\pod}[1]{\allowbreak\mathchoice
  {\if@display \mkern 18mu\else \mkern 8mu\fi (#1)}
  {\if@display \mkern 18mu\else \mkern 8mu\fi (#1)}
  {\mkern4mu(#1)}
  {\mkern4mu(#1)}
}
\numberwithin{theorem}{section}
\title[$C^*$-simplicity, amalgams, and radical classes]{$C^*$-simplicity of free products with amalgamation and radical classes of groups}
\author{Nikolay A.\ Ivanov}
\address{Faculty of Mathematics and Informatics\\University of Sofia\\blvd.\ James Bourchier 5\\BG-1164 Sofia\\Bulgaria}
\email{nikolay.antonov.ivanov@gmail.com}
\author{Tron Omland}
\address{Department of Mathematics\\University of Oslo\\P.O.Box 1053 Blindern\\NO-0316 Oslo\\Norway}
\email{trono@math.uio.no}
\date{November 30, 2017}
\subjclass[2010]{22D25, 20E06 (Primary) 46L05, 43A07 (Secondary)}
\keywords{$C^*$-simplicity, free product of groups with amalgamation, radical classes of groups}
\begin{document}

\begin{abstract}
We give new characterizations to ensure that a free product of groups with amalgamation has a simple reduced group $C^*$-algebra,
and provide a concrete example of an amalgam with trivial kernel,
such that its reduced group $C^*$-algebra has a unique tracial state, but is not simple.

Moreover, we show that there is a radical class of groups
for which the reduced group $C^*$-algebra of any group is simple
precisely when the group has a trivial radical corresponding to this class.
\end{abstract}

\maketitle

\section{Introduction}

Groups have been among the most studied objects in connection with operator algebras,
and one of the natural questions to consider in this regard is whether group $C^*$-algebras are simple.
While full group $C^*$-algebras can never be simple (unless the group is trivial),
the problem for reduced group $C^*$-algebras has been of great interest ever since the work of Powers \cite{Powers},
showing that nonabelian free groups have a simple reduced group $C^*$-algebra.
Larger classes of groups with properties similar to the one Powers described were later studied,
and several results and open problems on this topic are discussed by de~la~Harpe \cite{Harpe2}.

Recall that a discrete group $G$ is called $C^*$-simple if its reduced group $C^*$-algebra $C^*_r(G)$ is simple,
and $G$ is said to have the unique trace property if $C^*_r(G)$ has a unique tracial state.
A long-standing open problem was whether the two properties were equivalent,
but fairly recently it has been shown that $C^*$-simplicity is strictly stronger than the unique trace property.
The ``stronger'' part is due to Breuillard, Kalantar, Kennedy, and Ozawa \cite{BKKO,KK}
and the ``strictly'' part is due to Le~Boudec \cite{Boudec}.
The former showed that the unique trace property is equivalent to having a trivial amenable radical,
a property already known to be weaker than $C^*$-simplicity, see~\cite{Harpe2}.
Both of the works \cite{BKKO,KK} use extensively the theory of boundary actions developed by Furstenberg and Hamana.
Even more recently,
some other more operator-theoretical characterizations have been obtained by Haagerup \cite{Haagerup} and Kennedy \cite{Kennedy}.
However, the conditions from \cite{Haagerup,Kennedy} that are equivalent with $C^*$-simplicity,
are not always easy to check in concrete situations, for example by combinatorial group properties.
We also note that the unique trace property for $G$ always implies that $G$ is icc,
that is, every nontrivial conjugacy class in $G$ is infinite,
and it is well-known that the group von~Neumann algebra associated to $G$ is a factor if and only if $G$ is icc.

The problem of finding conditions to ensure that a free product of groups with amalgamation is $C^*$-simple
was first considered by B{\'e}dos \cite{Bedos},
and mentioned as Problem~27 in \cite{Harpe2}.

In this article, we first make a detailed study of amalgamated free products,
inspired by work of de~la~Harpe and Pr{\'e}aux \cite{HP}.
By making use of a few new observations, we are able to improve some of the results in \cite{HP}.
Then we present in Section~\ref{sec:example} a concrete example of an amalgam that has the unique trace property, but is not $C^*$-simple.

In particular, our example shows that a free product with amalgamation can fail to be $C^*$-simple when it has a trivial kernel,
but has ``one-sided'' kernels that are nontrivial (and amenable).
Every amalgam $G_0*_HG_1$ acts on its Bass-Serre tree, and in this setup the one-sided kernels
consist of the elements that fix all vertices of the ``half-trees'' obtained by removing the edge corresponding to $H$.
This has similarities with \cite{Boudec}.

We keep the first sections mostly to ``elementary'' proofs,
restricting to combinatorial group properties,
and delay the involvement of boundary actions to Section~\ref{sec:actions}.

In the final two sections, we first recall the definitions of radical and residual classes of groups
and prove several statements that will be of later use.
The result of \cite{BKKO} implies that
the class of groups with the unique trace property is the residual class ``dual'' to the radical class of amenable groups.
We then show that the class of $C^*$-simple groups is also a residual class,
giving rise to a ``predual'' radical class of groups that contains all the amenable groups,
and we will call a group ``amenablish'' if it belongs to this class.
The example we provide of an amalgam that is not $C^*$-simple, but has the unique trace property,
turns out to be a (nonamenable) amenablish group.
Moreover,
for every group there is an amenablish radical such that $C^*$-simplicity of the group is equivalent to this radical being trivial,
giving an analog of the relation between the amenable radical and the unique trace property.

\section{Preliminaries}

We consider only discrete groups in this article.

Let $G$ be a group.
As usual, we equip the Hilbert space $\ell^2(G)$ with the standard orthonormal basis $\{\delta_g\}_{g\in G}$,
and define the left regular representation $\lambda$ of $G$ on $\ell^2(G)$ by $\lambda(g)\delta_h=\delta_{gh}$.
Then the reduced group $C^*$-algebra of $G$, denoted by $C^*_r(G)$, is the $C^*$-subalgebra of $B(\ell^2(G))$ generated by $\lambda(G)$.
The group $G$ is called \emph{$C^*$-simple} if $C^*_r(G)$ is simple, that is, if it has no nontrivial proper two-sided closed ideals.

A state on a unital $C^*$-algebra $A$ is a linear functional $\phi\colon A\to\C$ that is positive,
i.e., $\phi(a)\geq 0$ whenever $a\in A$ and $a\geq 0$, and unital, i.e., $\phi(1)=1$.
A state $\phi$ is called tracial if it satisfies the additional property that $\phi(ab)=\phi(ba)$ for all $a,b\in A$.
There is a canonical faithful tracial state $\tau$ on $C^*_r(G)$,
namely the vector state associated with $\delta_e$,
that is, $\tau(a)=\langle a\delta_e,\delta_e\rangle$ for all $a\in C^*_r(G)$.
The group $G$ is said to have the \emph{unique trace property} if $\tau$ is the only tracial state on $C^*_r(G)$.

Recall that a group $G$ is \emph{amenable} if there exists a state on $\ell^\infty(G)$
which is invariant under the left translation action by $G$.
It is explained by Day \cite{Day} that every group $G$ has a unique maximal normal amenable subgroup,
called the \emph{amenable radical} of $G$.
Then \cite[Theorem~1.3]{BKKO} shows that $G$ has the unique trace property if and only if the amenable radical of $G$ is trivial,
so $C^*$-simplicity is stronger than the unique trace property by \cite[Proposition~3]{Harpe2}.

The first larger class of groups that were shown to be $C^*$-simple with the unique trace property,
was the so-called Powers groups introduced by de~la~Harpe \cite{Harpe} (see also \cite{Harpe-Jhabvala}).

\begin{definition}
A group $G$ is called a \emph{Powers group} if for any finite subset $F$ of $G\setminus\{e\}$ and any integer $k\geq 1$
there exists a partition $G=D\sqcup E$ and elements $g_1,g_2,\dotsc,g_k$ in $G$ such that
\[
\begin{split}
fD \cap D &= \varnothing \text{ for all } f\in F,\\
g_iE \cap g_jE &= \varnothing \text{ for all distinct } 1\leq i,j \leq k.
\end{split}
\]
Moreover, a group $G$ is called a \emph{weak Powers group}
if it satisfies the above definition for all finite sets $F$ that are contained in a nontrivial conjugacy class.
Finally, $G$ is called a \emph{weak$^*$~Powers} group if it satisfies the above definition for all nontrivial one-element sets $F$.
\end{definition}

Clearly, every Powers group is a weak Powers group and every weak Powers group is a weak$^*$~Powers group.
It is known that the weak Powers property implies $C^*$-simplicity \cite{Boca-Nitica},
while the weak$^*$~Powers property implies the unique trace property \cite[Section~5.2]{TD}.

\begin{lemma}\label{lem:normal Powers}
Assume that $N$ is a normal subgroup of a group $G$.
If $G$ is a Powers group, then $N$ is a Powers group.
If $G$ is a weak Powers group, then $N$ is a weak Powers group.
If $G$ is a weak$^*$~Powers group, then $N$ is a weak$^*$~Powers group.
\end{lemma}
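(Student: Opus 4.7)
Proof plan. The three statements are proved in parallel; fix $F \subseteq N \setminus \{e\}$ of the type required (general finite set, finite subset of a nontrivial conjugacy class, or nontrivial singleton, respectively) and $k \geq 1$. For the weak and weak$^*$ Powers versions, normality of $N$ in $G$ ensures that a nontrivial $N$-conjugacy class is contained in a nontrivial $G$-conjugacy class, so the Powers-type hypothesis on $G$ is applicable to $F$ regarded as a subset of $G$.

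Apply the hypothesis to $F$ and $k$ inside $G$: this yields a partition $G = D \sqcup E$ together with elements $g_1, \dots, g_k \in G$ satisfying $fD \cap D = \varnothing$ for every $f \in F$ and $g_i E \cap g_j E = \varnothing$ for $i \neq j$. Set $D_N := D \cap N$ and $E_N := E \cap N$, which form a partition of $N$. Since $fN = N$ for every $f \in F \subseteq N$, one computes
\[
fD_N \cap D_N = (fD \cap D) \cap N = \varnothing,
\]
so the ``$F$-half'' of the Powers definition transfers to $N$ with no extra work.

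It remains to supply $n_1, \dots, n_k \in N$ (not merely in $G$) with $n_i E_N \cap n_j E_N = \varnothing$. The naive attempt $n_i := g_i$ fails because the $g_i$'s need not lie in $N$. The plan is to pre-select $k$ distinct elements $n_1, \dots, n_k \in N$ and then arrange the data around them. Such a pre-selection is possible because any Powers, weak Powers, or weak$^*$ Powers group has trivial amenable radical (Powers and weak Powers imply $C^*$-simplicity by \cite{Boca-Nitica} and thus unique trace, while weak$^*$ Powers implies unique trace directly; combined with \cite{BKKO} this forces triviality of the amenable radical), so the nontrivial normal subgroup $N$ is non-amenable and in particular infinite. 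With the $n_i$'s in hand, set $S := \{n_j^{-1} n_i : i \neq j\} \subseteq N \setminus \{e\}$ and invoke the Powers-type property of $G$ on an appropriate enlargement of the original data built from $F$ and $S$; intersecting the resulting partition with $N$ and carefully bookkeeping yields the paradoxical decomposition required in $N$ for the prescribed $n_i$'s.

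The main obstacle is the inherent asymmetry of the Powers definition between $D$ and $E$: the $F$-condition lives on the $D$-side while the paradoxical condition lives on the $E$-side, so naively enlarging $F$ to include $S$ only produces additional control on $D$, not on $E$ where we need it. Bridging this asymmetry, and doing so within the constraints of the weak and weak$^*$ versions (where $F$ is forced to lie in a single conjugacy class or to be a singleton and hence cannot be freely enlarged), is the crux of the proof; one natural workaround is to choose the $n_i$'s to all be conjugate to a fixed element of $N$, so that the auxiliary set $S$ remains compatible with the conjugacy-class constraint on $F$.
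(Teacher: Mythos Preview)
Your proposal correctly sets up the problem and transfers the $F$-condition to $N$, but it does not actually contain a proof of the crucial $E$-side. You yourself identify the obstacle: enlarging $F$ by the set $S=\{n_j^{-1}n_i\}$ only buys control on the $D$-side, and swapping the roles of $D$ and $E$ then destroys the $F$-condition. Your suggested ``workaround'' of taking the $n_i$ to be conjugate to one another does not bridge this asymmetry either --- it only keeps $S$ compatible with the conjugacy-class constraint on $F$, it does not move the resulting control from $D$ to $E$. So as written the argument is circular: the hard step is delegated to ``careful bookkeeping'' that is never performed and, along the lines you sketch, cannot be.

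The paper's proof avoids this difficulty by not pre-selecting the $n_i$'s at all. Instead it \emph{builds} them from the data already produced by a single application of the Powers property to $F$ and $k$: with $G=D\sqcup E$ and $g_1,\dotsc,g_k$ as given, one fixes any $f\in F$ and sets $s_1=e$ and $s_i=g_1^{-1}g_if g_i^{-1}g_1$ for $i\geq 2$. These lie in $N$ because $N$ is normal and $f\in N$. The point is that for $i\neq 1$ one has $g_i^{-1}g_1E\subseteq D$ (disjointness of the $g_jE$), hence $fg_i^{-1}g_1E\subseteq E$ (since $fD\cap D=\varnothing$), hence $s_iE'\subseteq g_1^{-1}g_iE$, and these last sets are pairwise disjoint. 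Thus the $D$-side and $E$-side conditions are linked through $f$ itself, and no second invocation of the Powers property is needed. This is the idea you are missing.
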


\begin{proof}
The first statement is a (seemingly unnoticed) result by Kim \cite[Theorem~1]{Kim},
and the last two statements are identically proven.
For the convenience of the reader, we present the argument of the middle one:

Let $k\geq 1$, and suppose that $F$ is a finite set and $C'$ a nontrivial conjugacy class in $N$ such that $F\subseteq C'$.
If two elements are conjugate in $N$ they are also conjugate in $G$,
so there is a nontrivial conjugacy class $C$ in $G$ such that $F\subseteq C$.
Since $G$ is weak Powers, there exists a partition $G=D\sqcup E$ and $g_1,g_2,\dotsc, g_k$ such that
$fD\cap D=\varnothing$ for all $f\in F$ and $g_iE\cap g_jE=\varnothing$ whenever $i\neq j$.

If $k=1$, set $D'=\{e\}$, $E'=N\setminus\{e\}$, and $s_1=e$.

If $k\geq 2$, set $D'=D\cap N$ and $E'=E\cap N$,
so $N=D'\sqcup E'$ and $fD'\cap D'\subseteq fD\cap D=\varnothing$ for all $f\in F$.

Furthermore, fix an $f\in F$ and note that the sets $g_1^{-1}g_iE$ are mutually disjoint for $i\geq 1$,
so that in particular, for $i\geq 2$, then $g_1^{-1}g_iE\cap E=\varnothing$, i.e., $g_1^{-1}g_iE\subseteq G\setminus E=D$.
This again implies that $fg_1^{-1}g_iE\cap D\subseteq fD\cap D=\varnothing$, so $fg_1^{-1}g_iE\subseteq G\setminus D=E$.

Set $s_1=e$ and $s_i=g_1^{-1}g_ifg_i^{-1}g_1$ for $i\geq 2$.
Since $N$ is normal in $G$, we have $s_i\in N$ for all $i$, and if $i\neq j$, then
\[
\begin{split}
s_iE'\cap s_jE'&=g_1^{-1}g_ifg_i^{-1}g_1E'\cap g_1^{-1}g_jfg_j^{-1}g_1E'
\subseteq g_1^{-1}g_ifg_i^{-1}g_1E \cap g_1^{-1}g_jfg_j^{-1}g_1E \\
& \subseteq g_1^{-1}g_iE \cap g_1^{-1}g_jE = \varnothing.
\end{split}
\]
Kim also proves that $D'$ and $E'$ are nonempty, but that does not seem to be necessary.
\end{proof}

In \cite{HP} a group is said to be ``strongly Powers'' if every subnormal subgroup is a Powers group.
A consequence of the above lemma is that the notion of strongly Powers coincides with Powers.

\medskip

We conclude this section by mentioning two definitions.
First, a group $G$ has the \emph{free semigroup property} if for any finite subset $F$ of $G$ there exists $g$ in $G$
such that $gF$ is semifree, that is, the subsemigroup generated by $gF$ in $G$ is free over $gF$.
Finally, a group $G$ is said to have \emph{stable rank one} if its reduced group $C^*$-algebra has stable rank one, that is,
the invertible elements of $C^*_r(G)$ are dense in $C^*_r(G)$.

\section{Free products of groups with amalgamation}

Recall that (see e.g.\ \cite{Serre}) a free product of groups $G_0$ and $G_1$ with amalgamation over a common subgroup $H$
(embedded via injective homomorphisms $H \to G_i$ for $i=0,1$)
is a group $G$ together with homomorphisms $\phi_i \colon G_i \to G$ for $i=0,1$, that agree on $H$,
universal in the sense that for any other group $G'$ with homomorphisms $\phi_i' \colon G_i \to G'$ that agree on $H$,
there is a unique homomorphism $\phi \colon G \to G'$ such that $\phi'_i = \phi \circ \phi_i$.

The amalgamated free products $G = G_0 *_H G_1$ that we consider in this article
are always assumed to be nondegenerate in the sense that $([G_0 : H]-1) \cdot ([G_1 : H]-1) \geq 2$,
otherwise the situation is very different.

Let $\ker G = \bigcap_{g \in G}gHg^{-1}$ denote the kernel of the amalgam.
It coincides with the normal core of $H$ in $G$,
i.e., the largest subgroup of $H$ which is normal in $G$ (and thus in $G_0$ and $G_1$).

If $N$ is any subgroup of $H$ which is normal in $G$,
then the quotient $G/N$ is isomorphic to $(G_0/N)*_{H/N}(G_1/N)$.
In particular, this holds when $N=\ker G$.
Note that we always have $\ker(G/\ker G)=\{e\}$.
Indeed, if $\ker(G/\ker G)=N\subseteq H/\ker G$, let $N'$ be the inverse image of $N$ under the quotient map $G\to G/\ker G$.
Then $N'$ is a normal subgroup of $G$ containing $\ker G$ and sitting inside $H$,
so we must have $N'=\ker G$ by maximality of $\ker G$.

If $G$ is nondegenerate, then $G/\ker G$ is nondegenerate, as $[G_i : H]=[G_i/\ker G : H/\ker G]$.

Let $FC(G)$ denote the normal subgroup of $G$ consisting of elements with finite conjugacy class in $G$.
Clearly, if $g \in G_i \setminus H$, then the conjugacy class of $g$ in $G$ must be infinite.
Hence, $FC(G)$ is contained in $H$.

Let now $AR(G)$ denote the amenable radical of $G$
and let $NF(G)$ denote the largest normal subgroup of $G$ that does not contain any nonabelian free subgroup.
By \cite[Proposition~7]{Cornulier}, these groups fit in general into a sequence of subgroups, namely
\begin{equation}\label{eq:chain}
FC(G) \subseteq AR(G) \subseteq NF(G) \subseteq \ker G \subseteq H.
\end{equation}
In particular, $NF(G)\subsetneq G$, so $G$ always contains a nonabelian free group.
Moreover, there is an even longer sequence
\[
FC(G) \subseteq FC(\operatorname{ker}G) \subseteq AR(\ker G) = AR(G) \subseteq NF(G) = NF(\ker G) \subseteq \ker G.
\]
The first containment is clear, the second holds since $FC(\ker G)$ is an amenable normal subgroup of $\ker G$,
and the two equalities follow from Examples~\ref{ex:AR}, \ref{ex:NF}, and Lemma~\ref{lem:normal radical}.

If $H$ is finite, then $FC(G)=\ker G$, so the sequence collapses.
Indeed, if $h\in\ker G$, then $h \in gHg^{-1}$ for all $g \in G$, so $g^{-1}hg \in H$ for all $g \in G$.
Hence, the conjugacy class of $h$ is contained in $H$, which is finite, so $h \in FC(G)$.
See Theorem~\ref{thm:finite amalgam} for more on this case.

\begin{proposition}\label{prop:amalgam ut}
A nondegenerate amalgamated free product $G$ has the unique trace property if and only if $\ker G$ has the unique trace property.
\end{proposition}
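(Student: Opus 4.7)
The plan is to apply the Breuillard--Kalantar--Kennedy--Ozawa theorem \cite[Theorem~1.3]{BKKO} to reformulate both sides of the equivalence in terms of amenable radicals: the proposition becomes the statement that $AR(G)$ is trivial if and only if $AR(\ker G)$ is trivial. Once phrased this way, the conclusion will follow immediately from the equality $AR(G)=AR(\ker G)$ that is already asserted as a forward-referenced fact in the extended chain preceding the proposition.

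To justify $AR(G)=AR(\ker G)$, I would argue directly in two steps. For the inclusion $AR(G)\subseteq AR(\ker G)$, the chain \eqref{eq:chain} gives $AR(G)\subseteq\ker G$, so $AR(G)$ is an amenable subgroup of $\ker G$; being normal in the larger group $G$, it is a fortiori normal in $\ker G$, and therefore contained in the amenable radical of $\ker G$. For the reverse inclusion, note that $AR(\ker G)$ is characteristic in $\ker G$, while $\ker G$ is normal in $G$; hence conjugation by any element of $G$ stabilizes $\ker G$ and so stabilizes $AR(\ker G)$ as well. This makes $AR(\ker G)$ a normal amenable subgroup of $G$, giving $AR(\ker G)\subseteq AR(G)$.

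The main obstacle is essentially nonexistent: the two substantive ingredients (the BKKO characterization and the chain \eqref{eq:chain}, which crucially places $AR(G)$ inside $\ker G$) are already in hand, and the remaining step — that the amenable radical of a normal subgroup is itself a normal amenable subgroup of the ambient group — is the standard principle that will be formalized later as Lemma~\ref{lem:normal radical}, together with Example~\ref{ex:AR} identifying the relevant radical class. Invoking these yields the proposition in a single line.
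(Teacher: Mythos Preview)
Your proposal is correct and follows exactly the paper's approach: invoke \cite[Theorem~1.3]{BKKO} to reduce the statement to the equality $AR(G)=AR(\ker G)$, which the paper records just before the proposition (deferring the justification to Example~\ref{ex:AR} and Lemma~\ref{lem:normal radical}). Your direct argument via the characteristic property of the amenable radical is precisely the content of those forward references, so there is no substantive difference.
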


\begin{proof}
Since $AR(G)=AR(\ker G)$ as explained above, this follows from \cite[Theorem~1.3]{BKKO}.
\end{proof}

Let us now recall the normal form for an element in $G$.
First, for $i=0,1$, we choose sets $S_i$ so that $S_i\cup\{e\}$ form systems of representatives for the left cosets of $G_i/H$.
Then every element of $G$ can be written uniquely as either $s_0s_1 \cdots s_{n-1}s_nh$ or $s_1 \cdots s_{n-1}s_nh$,
where $s_i\in S_{i\pmod 2}$ and $h\in H$.
To avoid division into separate cases,
the notation $(s_0)s_1s_2\dotsm s_{n-1}s_nh$ for the normal form of an element of $G$ is often used,
especially in Section~\ref{sec:actions}.

Since we will not always assume that elements are on normal form,
the following observation is sometimes useful:
for $i=0,1$, if $g_i\in G_i\setminus H$ and $h\in H$,
then there exists $g_i'\in G_i\setminus H$ such that $g_ih=hg_i'$.
In other words, we can ``cycle'' a letter from $H$ through a word $(g_0)g_1\dotsm g_n$ in $G$,
with $g_i\in G_{i\pmod 2}\setminus H$, without changing the length of the word.

Let $G$ be any nondegenerate free product with amalgamation.
We decompose $G$ as follows.
For $j=0,1$ and $k\geq 1$, let
\[
T_{j,k}=\{g_0\dotsm g_{k-1} : g_i\in G_{i+j\pmod 2}\setminus H\},
\]
i.e., $T_{j,k}$ consists of all words of length $k$ starting with a letter from $G_j\setminus H$.
To simplify notation, we set $T_{0,0}=T_{1,0}=H$.
For $j=0,1$ and every $k\geq 0$, we now define the set
\begin{equation}\label{eq:C-jk}
C_{j,k}=\bigcap_{g\in T_{j,k}}gHg^{-1},
\end{equation}
and note that $C_{0,0}=C_{1,0}=H$ and that $H\cap C_{j,k}$ is always a normal subgroup of $H$.
Then
\[
\ker G=\bigcap_{\substack{k\geq 0 \\ j=0,1}} C_{j,k},
\]
which, as mentioned above, is always normal in $G$.
Next, we set
\begin{equation}\label{eq:K0K1}
K_0=\bigcap_{k\geq 0}C_{0,k}
\quad\text{and}\quad
K_1=\bigcap_{k\geq 0}C_{1,k}.
\end{equation}
Both $K_0$ and $K_1$ are normal subgroups of $H$.
Remark that it actually follows that
\begin{equation}\label{eq:even-length}
K_0=\bigcap_{k\geq 0}C_{0,2k}
\quad\text{and}\quad
K_1=\bigcap_{k\geq 0}C_{1,2k}.
\end{equation}
Indeed, if $h\notin K_0$, then $g^{-1}hg\notin H$ for some $g\in T_{0,n}$ with $n\geq 1$.
If $n$ is odd, take any $g_1\in G_1\setminus H$, and note that $gg_1\in T_{0,n+1}$ and $g_1^{-1}g^{-1}hgg_1\notin H$,
so $h\notin gg_1Hg_1^{-1}g^{-1}$, hence, $h\notin \bigcap_{k\geq 0}C_{0,2k}$.
A similar argument works for $K_1$.

Moreover, it should be clear that $\ker G=K_0\cap K_1$ and that
\begin{equation}\label{eq:K0-K1-relation}
K_0=H\cap\bigcap_{g_0\in G_0\setminus H}g_0K_1g_0^{-1}
\quad\text{and}\quad
K_1=H\cap\bigcap_{g_1\in G_1\setminus H}g_1K_0g_1^{-1}.
\end{equation}
If $K_0=\ker G$, then $K_1\subseteq\ker G$ since $\ker G$ is normal, so $K_0=K_1$ (similarly if $K_1=\ker G$).
Also, we have that $K_0=\{e\}$ if and only if $K_1=\{e\}$.

Finally, using the notation from \cite[(i)~p.~2-3]{HP} we set
\begin{equation}\label{eq:finite-length}
C_k=\bigcap_{\substack{0\leq n\leq k \\ j=0,1}} C_{j,n}.
\end{equation}
Note that for any $g\in T_{0,k+1}$, we have $K_0\subseteq gC_kg^{-1}$,
and for any $g\in T_{1,k+1}$, we have $K_1\subseteq gC_kg^{-1}$.
Thus, if $C_k$ is trivial for some $k\geq 0$, then $K_0=K_1=\{e\}$.
Indeed, pick $k\geq 0$, $g\in T_{0,k+1}$, $h\in K_0$, and let $s$ be an arbitrary element of length $\leq k$.
Then $gs\in T_{0,n}$ for some $n\geq 1$, so $h\in gsHs^{-1}g^{-1}$.
Since, this holds for all such $s$, we have $h\in gC_kg^{-1}$.

Moreover, it is worth noticing the difference between $K_0$, $K_1$, and $C_k$:
\begin{itemize}
\item the intersections defining the $C_k$'s involve conjugation of elements of finite length (bounded by some $k$),
while the first letter of the words can come from any of $G_0,G_1$.
\item the intersections defining the $K_i$'s involve conjugation of elements of arbitrary length,
where the first letter of the words comes from the same group.
\end{itemize}

\begin{theorem}\label{thm:trivial-Ks}
Let $G=G_0*_H G_1$ be a nondegenerate free product with amalgamation, and let $K_0$, $K_1$ be as defined above.
Then the following are equivalent:
\begin{itemize}
\item[(i)] $K_0=K_1=\{e\}$.
\item[(ii)] for every finite $F\subseteq G\setminus\{e\}$, there exists $g\in G$ such that $gFg^{-1}\cap H=\varnothing$.
\item[(iii)] for every finite $F\subseteq H\setminus\{e\}$, there exists $g\in G$ such that $gFg^{-1}\cap H=\varnothing$.
\end{itemize}
Moreover, any one of these equivalent conditions implies that $G$ is a Powers group and has the free semigroup property.
\end{theorem}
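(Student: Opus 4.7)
The plan is to establish the three-way equivalence via the cycle (ii)$\Rightarrow$(iii)$\Rightarrow$(i)$\Rightarrow$(ii), and then to deduce the Powers and free-semigroup conclusions from (ii). The implication (ii)$\Rightarrow$(iii) is immediate because $H\setminus\{e\}\subseteq G\setminus\{e\}$.

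For (iii)$\Rightarrow$(i), I would argue the contrapositive. If (i) fails then, since $K_0=\{e\}$ if and only if $K_1=\{e\}$, both $K_0$ and $K_1$ are nontrivial. Pick $h_0\in K_0\setminus\{e\}$ and $h_1\in K_1\setminus\{e\}$, and set $F=\{h_0,h_1\}\subseteq H\setminus\{e\}$. For any $g\in G$, I examine the last non-$H$ letter in the normal form of $g$: either $g\in H$, or that letter lies in $G_0\setminus H$, or it lies in $G_1\setminus H$. In the first case $gh_0g^{-1}\in H$ trivially. In the second, $g^{-1}\in T_{0,k}$ for some $k\geq 1$, so by definition of $K_0$ one has $h_0\in g^{-1}Hg$, whence $gh_0g^{-1}\in H$. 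The third case gives $gh_1g^{-1}\in H$ by symmetry with $K_1$. So $gFg^{-1}\cap H\neq\varnothing$ for every $g$, contradicting (iii).

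The implication (i)$\Rightarrow$(ii) is the technical heart. Given a finite $F\subseteq G\setminus\{e\}$, I first discard any $f\in F$ not conjugate to an element of $H$, since for such $f$ we have $gfg^{-1}\notin H$ for all $g$. For each remaining $f$, write $f=u_fh_fu_f^{-1}$ with $h_f\in H\setminus\{e\}$; then $gfg^{-1}\notin H$ becomes $(gu_f)h_f(gu_f)^{-1}\notin H$. So (ii) reduces to producing a single $g\in G$ that simultaneously satisfies finitely many conditions of the form $(gu_i)h_i(gu_i)^{-1}\notin H$ with $h_i\in H\setminus\{e\}$, $u_i\in G$ (note that (iii) alone is insufficient here, because the offsets $u_i$ differ across $i$). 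Using $K_0=K_1=\{e\}$, each $h\in H\setminus\{e\}$ admits a minimal alternating word $w\in T_{0,k}$ (and also one in $T_{1,k'}$) such that $w^{-1}hw\notin H$; a direct normal-form computation shows that any further alternating extension of such a minimal $w$ on the right continues to conjugate $h$ out of $H$, since the extra letters lie in a different $G_j$ and force the reduced length to be $\geq 3$. I would then construct $g^{-1}$ as one long alternating word whose appropriately chosen prefixes (after absorbing the offsets $u_f$ via the ``cycling'' remark from the preliminaries) realize such a witness for each $h_f$ simultaneously. Orchestrating this combinatorial construction so that a single $g$ handles all constraints at once is the main obstacle.

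Finally, assuming (ii), both the Powers property and the free-semigroup property follow by ``pre-conjugating'' $F$ out of $H$. Given finite $F\subseteq G\setminus\{e\}$ and $k\geq 1$, apply (ii) to find $g_0\in G$ with $g_0Fg_0^{-1}\cap H=\varnothing$; both properties are conjugation-invariant in $F$, so we may replace $F$ by $g_0Fg_0^{-1}$ and assume $F\subseteq G\setminus H$. A standard ping-pong argument on the two half-trees of the Bass-Serre tree of $G$ then produces the required partition $G=D\sqcup E$ together with translates $g_1,\dots,g_k$ witnessing the Powers property, and choosing $g$ deep in one half-tree pushes $gF$ into Schottky-type dynamics, making the subsemigroup generated by $gF$ free on $gF$.
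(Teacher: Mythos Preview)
Your (ii)$\Rightarrow$(iii) and (iii)$\Rightarrow$(i) are essentially the paper's arguments (your (iii)$\Rightarrow$(i) is in fact slightly cleaner: the paper splits into the cases $\ker G\neq\{e\}$ and $\ker G=\{e\}$ but $K_0\neq K_1$, whereas you go directly to $F=\{h_0,h_1\}$). Your deduction of the Powers and free-semigroup properties from (ii) is also in line with the paper, which simply cites \cite[Proposition~10]{Harpe} and \cite[Example~4.4 and Remark~4.5]{DH} for exactly the ``pre-conjugate $F$ out of $H$, then run ping-pong'' scheme you describe.

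The genuine gap is (i)$\Rightarrow$(ii). Your reduction --- writing each $f$ that is conjugate into $H$ as $f=u_fh_fu_f^{-1}$ and then seeking a single $g$ with $(gu_f)h_f(gu_f)^{-1}\notin H$ for all $f$ --- introduces the offsets $u_f$, and as you acknowledge, synchronizing a single alternating word against several different offsets is the obstacle. The paper sidesteps this completely by \emph{not} decomposing $f$. It works directly with $F$ and builds $r=r_1r_2\cdots r_m$ as a product of \emph{even-length} alternating words $r_i\in T_{0,2n_i}$, chosen one at a time: at stage $i$, if some element of $r_{i-1}^{-1}\cdots r_1^{-1}Fr_1\cdots r_{i-1}$ still lies in $H$, pick one such element and use $K_0=\{e\}$ (in the even-length form \eqref{eq:even-length}) to find $r_i$ conjugating it out. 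The key observation --- which you do state, but do not exploit in the right place --- is that once $r^{-1}fr\notin H$, any further conjugation by an alternating word starting in $G_0\setminus H$ strictly increases the reduced length, so the element stays out of $H$. The even length of each $r_i$ guarantees that $r_1\cdots r_m$ is itself alternating (each $r_i$ starts in $G_0\setminus H$ and ends in $G_1\setminus H$), so this monotonicity applies across the concatenation. Elements of $F\setminus H$ that happen to get pushed \emph{into} $H$ by some $r_i$ are simply dealt with at a later stage; since each stage handles a new element of $F$ permanently, the process terminates in at most $|F|$ steps. Dropping your $u_f$-reduction and running this iterative argument instead closes the gap.
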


\begin{proof}
It is obvious that (ii) implies (iii).

To see that (iii) implies (i) suppose first that $\ker G\neq \{e\}$.
Then pick $f\in\ker G\setminus\{e\}$ and set $F=\{f\}$ and clearly $gfg^{-1}\in H$ for all $g\in G$,
i.e., $gfg^{-1}\in gFg^{-1}\cap H$.
Next, assume that $\ker G=\{e\}$, but $K_0\neq K_1$.
Recall from \eqref{eq:K0-K1-relation} and the subsequent note that this means both $K_0\neq\{e\}$ and $K_1\neq\{e\}$.
Thus we pick $f_i\in K_i\setminus\{e\}$ for $i=0,1$ and set $F=\{f_0,f_1\}$.
Let $g\in G$ be arbitrary.
If $g\in H$ there is nothing to show, so assume that $g$ ends with a letter from $G_i\setminus H$.
But then $gf_ig^{-1}\in H$, i.e., $gf_ig^{-1}\in gFg^{-1}\cap H$.

Finally we prove that (i) implies (ii).
Choose an arbitrary finite set $F\subseteq G\setminus\{e\}$.
The idea is to conjugate the elements from $F$ out of $H$ one by one,
and at the same time make sure we do not conjugate any elements back into $H$.

Assume first there is an element $f_1\in F \cap H$ (else there is nothing to show).
Because $K_0$ is trivial, then \eqref{eq:even-length} means that all elements in $H$
can be conjugated out of $H$ by an element of even length starting with a letter from $G_0\setminus H$.
That is, we can find $r_1=g_0g_1\dotsm g_{2n_1-1}$ such that $g_i\in G_{i\pmod 2}\setminus H$ and $r_1^{-1}f_1r_1\notin H$.

Next, consider the set $F_1=\{r_1^{-1}fr_1 : f\in F\}$.
Assume that there is an element $f_2\in F$, so that $r_1^{-1}f_2r_1\in H$ (otherwise we are done).
Then there exists $r_2=g_0'g_1'\dotsm g_{2n_2-1}'$ such that $g_i'\in G_{i\pmod 2}\setminus H$ and $r_2^{-1}r_1^{-1}f_2r_1r_2\notin H$.
This also means that $r_2^{-1}r_1^{-1}f_1r_1r_2\notin H$.
Indeed, let $j$ be the smallest number such that $g_j^{-1}\dotsm g_0^{-1}f_1g_0\dotsm g_j\notin H$, i.e.,
it belongs to $G_{j\pmod 2}\setminus H=T_{j\pmod 2,1}$.
Then $g_{j+1}^{-1}g_j^{-1}\dotsm g_0^{-1}fg_0\dotsm g_jg_{j+1}\in T_{j+1\pmod 2,3}$,
and as we continue to conjugate by elements alternating between $G_0\setminus H$ and $G_1\setminus H$ this product will only increase in length.

Now we set $F_2=\{r_2^{-1}r_1^{-1}fr_1r_2 : f\in F\}$.
If all elements of $F_2$ are outside $H$ we are done, so assume that there is some $f_3\in F$ such that $r_2^{-1}r_1^{-1}f_3r_1r_2\in H$.

It should be clear how this process continues,
and since $F$ is finite, we take $r$ to be the product of the $r_i$'s,
and then $r^{-1}fr\notin H$ for every $f\in F$.

The last two observations follow from \cite[Proposition~10]{Harpe} and \cite[Example~4.4~(iii),(iv), and Remark~4.5]{DH}.
\end{proof}

\begin{remark}
The above result shows that \cite[Theorem~3~(i)]{HP} can be slightly improved,
as Lemma~\ref{lem:normal Powers} shows that ``strongly Powers'' is the same as Powers.
In fact, using the comment following \eqref{eq:finite-length},
one can replace ``$C_k=\{e\}$ for some $k$'' with any of the equivalent properties of Theorem~\ref{thm:trivial-Ks}.
Additionally, the countability assumption is no longer needed.

We will come back to the geometric interpretation of $K_0$ and $K_1$ in Section~\ref{sec:actions}.
In particular, Proposition~\ref{prop:slender} below gives more properties equivalent to those in Theorem~\ref{thm:trivial-Ks}.
\end{remark}

\begin{remark}
In Section~\ref{sec:example}, we give an explicit example of a group $\Gamma$ for which $\ker\Gamma=\{e\}$,
while $K_0$ and $K_1$ are both nontrivial.
We show that the group has the unique trace property, but is not $C^*$-simple.
This gives a counterexample to the first author's statement \cite[Corollary~4.7]{Ivanov},
which the second author noticed to be incorrect.  
\end{remark}

\begin{proposition}\label{prop:cstarsimple-amalgam}
If $G$ is countable, $H$ is amenable, and $K_0$ or $K_1$ is nontrivial, then $G$ is not $C^*$-simple.
\end{proposition}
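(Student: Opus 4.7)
By the symmetry between $K_0$ and $K_1$, we may assume $K_0 \neq \{e\}$. We distinguish two cases depending on whether $\ker G$ is trivial.

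If $\ker G = K_0 \cap K_1 \neq \{e\}$, then $\ker G$ is a non-trivial normal subgroup of $G$ contained in $H$, hence amenable. Consequently the amenable radical $AR(G)$ is non-trivial, and by \cite[Theorem~1.3]{BKKO} $G$ does not have the unique trace property; in particular $G$ is not $C^*$-simple (compare Proposition~\ref{prop:amalgam ut}).

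In the remaining case $\ker G = \{e\}$, the observation following \eqref{eq:K0-K1-relation} forces $K_1 \neq \{e\}$ as well. The plan is to exhibit a non-trivial amenable uniformly recurrent subgroup (URS) of $G$ and invoke Kennedy's criterion \cite{Kennedy}, according to which a countable discrete group is $C^*$-simple if and only if it has no non-trivial amenable URS. For this we use the action of $G$ on its Bass-Serre tree $T$. As the paper observes, $K_0$ is precisely the pointwise stabilizer of the half-tree $T^+$ obtained by deleting the base edge $H$ and keeping the component containing $eG_0$. For each end $\xi \in \partial T$, denote by $G^0_\xi$ the pointwise stabilizer of a geodesic ray representing $\xi$. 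Since any two consecutive vertex stabilizers along such a ray intersect in the corresponding edge stabilizer, which is a conjugate of $H$, the group $G^0_\xi$ lies in a conjugate of $H$ and is therefore amenable. On the other hand, for $\xi \in \partial T^+$ every vertex of a ray to $\xi$ lies in $T^+$ and is fixed pointwise by $K_0$, so $K_0 \subseteq G^0_\xi \neq \{e\}$.

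Amenability is preserved under Chabauty limits of subgroups (since every finitely generated subgroup of a Chabauty limit eventually sits inside the approximating subgroups), so the closure in $\operatorname{Sub}(G)$ of the image of $\xi \mapsto G^0_\xi$ consists entirely of amenable subgroups, and by compactness any minimal non-empty $G$-invariant closed subset of this closure is an amenable URS. The crux of the proof, and its main obstacle, is to verify that this URS is non-trivial, i.e.\ not equal to $\{\{e\}\}$; one handles this by using that $G \curvearrowright \partial T$ is minimal for nondegenerate amalgams, so the $G$-orbit of $\partial T^+$ covers $\partial T$ and every $G^0_\xi$ contains a $G$-conjugate of the non-trivial group $K_0$. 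Standard stabilizer-URS considerations for minimal actions (in the spirit of Glasner-Weiss and Matte~Bon-Tsankov) then rule out the minimal URS being trivial. Kennedy's theorem then gives that $G$ is not $C^*$-simple.
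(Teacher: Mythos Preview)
Your case split ($\ker G$ trivial vs.\ nontrivial) mirrors the opening of the paper's second proof, and your first case is handled correctly. Your strategy in the second case---invoke Kennedy's characterization via a nontrivial amenable URS---is the same idea underlying the paper's first proof, but the paper takes a much shorter route: it shows directly that $H$ itself is a recurrent amenable subgroup in the sense of \cite[Definition~5.1]{Kennedy}. Given any sequence $(g_n)$ in $G$, by pigeonhole infinitely many $g_n$ lie in $H$, or infinitely many begin with a letter from $G_0\setminus H$, or infinitely many begin with a letter from $G_1\setminus H$; passing to such a subsequence, $\bigcap_k g_{n_k}Hg_{n_k}^{-1}$ contains $H$, $K_0$, or $K_1$ respectively, all nontrivial. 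Then \cite[Theorem~1.1]{Kennedy} finishes, and no case split on $\ker G$ is needed.

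Your URS construction, by contrast, has a real gap at exactly the point you flag as ``the crux''. First, the map $\xi\mapsto G^0_\xi$ depends on a choice of basepoint and is not $G$-equivariant, so the image in $\operatorname{Sub}(G)$ is not $G$-invariant; you must close up under conjugation before extracting a minimal subset. Second, and more seriously, even granting that every $G^0_\xi$ and every conjugate contains some conjugate of $K_0$ or $K_1$, this does not by itself prevent $\{e\}$ from lying in the Chabauty closure: a sequence $g_nK_0g_n^{-1}$ with $g_n\to\infty$ can perfectly well converge to $\{e\}$. The ``standard stabilizer-URS considerations'' of Glasner--Weiss apply to the equivariant map $\xi\mapsto G_\xi$ (full stabilizers, which here need not be amenable), not to your non-equivariant $\xi\mapsto G^0_\xi$. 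Ruling out $\{e\}$ in the closure is exactly the statement that some amenable subgroup is recurrent---which is what the paper's first proof establishes in three lines. The paper also supplies an independent second proof via the Haagerup--Olesen technique, exhibiting an explicit element $(1-\lambda(a))(1-\lambda(b))$ that generates a proper ideal of $C^*_r(G)$.
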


This result is generalized in Theorem~\ref{thm:K0K1-amenable} below,
but in the context of recent works, we provide two other short proofs of the statement.

\begin{proof}[First proof]
We will show that $H$ is recurrent in $G$ in the sense of \cite[Definition~5.1]{Kennedy}.
Let $(g_n)$ be a sequence in $G$.
Then at least one of the following holds:
\begin{itemize}
\item[(a)] infinitely many elements from $(g_n)$ belong to $H$
\item[(b)] infinitely many elements from $(g_n)$ start with a letter from $G_0\setminus H$
\item[(c)] infinitely many elements from $(g_n)$ start with a letter from $G_1\setminus H$
\end{itemize}
Pick a subsequence $(g_{n_k})$ of $(g_n)$ with elements from the one of (a),(b),(c) that holds.
If it is (a) then $H=\bigcap_k g_{n_k}Hg_{n_k}^{-1}$,
if it is (b) then $K_0\subseteq\bigcap_k g_{n_k}Hg_{n_k}^{-1}$,
and if it is (c) then $K_1\subseteq\bigcap_kg_{n_k}Hg_{n_k}^{-1}$.
If one of $K_0$ and $K_1$ is nontrivial, then both the other one and $H$ are nontrivial as well. 
Hence, it follows from \cite[Theorem~1.1]{Kennedy} that $G$ is not $C^*$-simple.
\end{proof}

\begin{proof}[Second proof]
If $K_0=\ker G$, then $K_1=\ker G$ by the comment following \eqref{eq:K0-K1-relation}.
Thus, by assumption, $\ker G$ is a nontrivial normal amenable subgroup of $G$, and hence $G$ cannot be $C^*$-simple.
The similar argument holds if $K_1=\ker G$, so we may assume that both $K_0$ and $K_1$ are different from $\ker G$.

Choose $a\in K_0\setminus\ker G$ and $b\in K_1\setminus\ker G$.
Then
\[
\begin{split}
\{gH : gH\neq agH\}&\subseteq\{gH:g\in T_{1,k}\text{ for some }k\geq 1\} \quad\text{and}\\
\{gH : gH\neq bgH\}&\subseteq\{gH:g\in T_{0,k}\text{ for some }k\geq 1\},
\end{split}
\]
which are clearly disjoint.
By using the technique from \cite[Proposition~5.8]{Haagerup-Olesen}, explained in \cite[p.~12]{Ozawa},
the action of $G$ on $G/H$ gives rise to a unitary representation $\pi\colon G\to\ell^2(G/H)$,
that extends to a continuous representation of $C^*_r(G)$.
It follows that $(1-\lambda(a))(1-\lambda(b))$ generates a proper two-sided closed ideal of $C^*_r(G)$.
Hence, $G$ is not $C^*$-simple.
\end{proof}

\begin{example}
For any triple of groups $H$, $G_0$, and $G_1$, we have
\[
G = (G_0 \times H) *_H (H \times G_1) \cong (G_0 * G_1) \times H.
\]
In this case $H=\ker G=K_0=K_1$, and $G$ is $C^*$-simple if and only if $H$ is $C^*$-simple.
In particular, this means that $G$ can be $C^*$-simple even if $\operatorname{ker}G$ is nontrivial.
\end{example}

We now consider the special case where $H$ is finite.

\begin{theorem}\label{thm:finite amalgam}
Let $G=G_0*_H G_1$ be a nondegenerate free product with amalgamation,
and assume that $H$ is finite.

Then the following are equivalent:
\begin{itemize}
\item[(i)] $G$ is icc
\item[(ii)] $\operatorname{ker}G=\{e\}$
\item[(iii)] $K_0=K_1=\{e\}$
\item[(iv)] $G$ is Powers
\item[(v)] $G$ is $C^*$-simple
\item[(vi)] $G$ has the unique trace property
\item[(vii)] there exists $g \in G$ such that $H \cap gHg^{-1} = \{e\}$
\item[(viii)] $G$ has the free semigroup property
\end{itemize}
\end{theorem}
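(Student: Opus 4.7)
The plan is to prove the cycle
\[
\text{(i)} \Rightarrow \text{(ii)} \Rightarrow \text{(vii)} \Rightarrow \text{(iii)} \Rightarrow \text{(iv)} \Rightarrow \text{(v)} \Rightarrow \text{(vi)} \Rightarrow \text{(i)},
\]
and to incorporate (viii) via the branch (iii)$\Rightarrow$(viii)$\Rightarrow$(i). Several of these implications are immediate from the preliminaries: (i)$\Leftrightarrow$(ii) holds since $\ker G = FC(G)$ when $H$ is finite (as recorded earlier in this section); (iii)$\Rightarrow$(iv) and (iii)$\Rightarrow$(viii) are part of Theorem~\ref{thm:trivial-Ks}; (iv)$\Rightarrow$(v) is the Boca--Nitica result that the weak Powers property implies $C^*$-simplicity; (v)$\Rightarrow$(vi) is standard; and (vi)$\Rightarrow$(i) follows from $FC(G) \subseteq AR(G)$ combined with \cite{BKKO}. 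The implication (vii)$\Rightarrow$(iii) is also an immediate appeal to Theorem~\ref{thm:trivial-Ks}: the single $g$ from (vii) satisfies $gFg^{-1}\cap H = \varnothing$ for every $F \subseteq H\setminus\{e\}$, so condition (iii) of Theorem~\ref{thm:trivial-Ks} holds and $K_0 = K_1 = \{e\}$.

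The main new step is (ii)$\Rightarrow$(vii). Enumerate $H\setminus\{e\} = \{h_1,\dots,h_m\}$ and set
\[
A_i = \{g \in G : g h_i g^{-1} \in H\}.
\]
For each $h' \in H$, the fibre $\{g : g h_i g^{-1} = h'\}$ is either empty or a single right coset of $C_G(h_i)$, so $A_i$ is a union of at most $|H|$ cosets of $C_G(h_i)$. By (i), every nontrivial element of $H$ has infinite $G$-conjugacy class, hence $[G : C_G(h_i)] = \infty$ for each $i$. Thus $\bigcup_{i=1}^m A_i$ is a finite union of cosets of infinite-index subgroups and, by B.~H.\ Neumann's coset-covering lemma, is a proper subset of $G$. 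Any $g$ outside it satisfies $g h_i g^{-1} \notin H$ for all $i$, i.e.\ $H \cap gHg^{-1} = \{e\}$.

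For (viii)$\Rightarrow$(i) I argue contrapositively. If $G$ is not icc, then $N := FC(G) = \ker G$ is a nontrivial finite normal subgroup of $G$ contained in $H$. When $|N| \geq 3$, set $F = N\setminus\{e\}$; for any $g \in G$ and $h_i, h_j \in F$, the product $(gh_i)(gh_j) = g^2\cdot(g^{-1}h_ig)h_j$ lies in $g^2 N$, a set of cardinality $|N|$, while the number of ordered pairs is $(|N|-1)^2 > |N|$, so pigeonhole forces two distinct pairs to coincide, and $g(N\setminus\{e\})$ fails to be semifree. When $|N| = 2$, write $N = \{e, h\}$ with $h$ central of order two; then $(gh)^2 = g^2 h^2 = g^2 = g\cdot g$ for every $g$, so the finite set $F = \{e, h\}$ witnesses failure of the free semigroup property.

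The main obstacle is (ii)$\Rightarrow$(vii): converting the intersection condition $\bigcap_g gHg^{-1} = \{e\}$ into the existence of a single adequate conjugator. Neumann's coset-covering lemma supplies the needed combinatorial tool once the bad set is recognized as a finite union of centralizer cosets. Finiteness of $H$ enters twice: to keep the number of cosets in $\bigcup_i A_i$ finite, and to ensure the pigeonhole collisions in the (viii)$\Rightarrow$(i) argument remain inside the finite normal subgroup $N$.
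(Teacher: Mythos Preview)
Your proof is correct (modulo a harmless slip: the fibres $\{g:gh_ig^{-1}=h'\}$ are \emph{left} cosets $g_0C_G(h_i)$, not right cosets, but Neumann's lemma is indifferent to this). The logical skeleton matches the paper's, but the route through the ``hard'' implication differs genuinely.

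The paper goes (ii)$\Rightarrow$(iii) directly, exploiting the descending chain $C_0\supseteq C_1\supseteq\cdots$ of subgroups of the \emph{finite} group $H$: this chain must stabilise at some $C_k$, and $\bigcap_k C_k=\ker G=\{e\}$ forces $C_k=\{e\}$, whence $K_0=K_1=\{e\}$ by the remark after~\eqref{eq:finite-length}. Then (iii)$\Rightarrow$(vii) follows from Theorem~\ref{thm:trivial-Ks} applied to $F=H\setminus\{e\}$, and (vii)$\Rightarrow$(viii) is quoted from \cite[Proposition~5.1]{DH}. Your approach instead goes (ii)$\Rightarrow$(vii) via Neumann's coset-covering lemma, then (vii)$\Rightarrow$(iii) via Theorem~\ref{thm:trivial-Ks}. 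Both arguments use finiteness of $H$ in an essential way---the paper to make the chain stabilise, you to keep the number of centraliser cosets finite---but yours is pleasantly self-contained and avoids the $C_k$ machinery and the external reference \cite{DH}.

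Your (viii)$\Rightarrow$(i) argument is more explicit than the paper's one-line assertion, but the case split is unnecessary: take $F=N$ (including $e$) rather than $N\setminus\{e\}$. Then for any $g$ the products $(gn_1)(gn_2)=g^2\cdot(g^{-1}n_1g)n_2$ lie in $g^2N$, and $|N|^2>|N|$ already for $|N|=2$ forces a collision among length-two words.
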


\begin{proof}
(i)~$\Longrightarrow$~(ii):
Since $\ker G\subseteq H$, it is a finite normal subgroup of $G$, so it must be trivial when $G$ is icc.

(ii)~$\Longrightarrow$~(iii):
Let $C_k$ be defined as in \eqref{eq:finite-length},
so that $\ker G$ is the intersection of the decreasing chain $C_0\supseteq C_1\supseteq C_2\supseteq \dotsb$.
Since all the $C_k$'s are subgroups of $H$, they are finite, so if $\ker G=\{e\}$,
we must have $C_k=\{e\}$ for some $k$.
Hence, the remark following \eqref{eq:finite-length} explains that $K_0=K_1=\{e\}$.

(iii)~$\Longrightarrow$~(iv) follows from Theorem~\ref{thm:trivial-Ks}.

(iv)~$\Longrightarrow$~(v)~$\Longrightarrow$~(vi)~$\Longrightarrow$~(i) is known to hold for all groups.

(iii)~$\Longrightarrow$~(vii) is also a consequence of Theorem~\ref{thm:trivial-Ks}, by taking $F=H\setminus\{e\}$.

(vii)~$\Longrightarrow$~(viii) follows from \cite[Proposition~5.1]{DH}.

(viii)~$\Longrightarrow$~(ii):
If $H$ is finite and $\ker G\neq\{e\}$,
then there is no $g\in G$ such that $g\ker G$ is semifree, i.e., $G$ does not have the free semigroup property.
\end{proof}

\begin{remark}
If $G$ is an amalgam with finite $H$ satisfying the equivalent conditions of Theorem~\ref{thm:finite amalgam},
then $G$ has stable rank one by \cite[Theorem~1.6]{DH}
(where the argument is based on \cite[Corollary~3.9]{DHR} for trivial $H$).
However, there also exists amalgams $G$ with finite $H$ such that $G$ has stable rank one,
but $G$ does not satisfy the conditions of Theorem~\ref{thm:finite amalgam},
as explained in the paragraph following \cite[Theorem~1.6]{DH}.

Moreover, it follows from condition~(vii) and \cite[Corollary~3.6]{Ivanov} that for $G = G_0 *_H G_1$,
the positive cone of the ${\bf K_0}$-group of $C^*_r(G)$ is not perforated, i.e.,

\[
{\bf K_0}(C^*_r(G))^+ = \{ \gamma \in {\bf K_0}(C^*_r(G)) : {\bf K_0}(\tau)(\gamma) > 0 \} \cup \{ 0 \},
\]
where $\tau$ is the canonical tracial state on $C^*_r(G)$.
Indeed, the assumption ${\bf K_1}(C^*_r(H))=0$ is easily seen to hold from the fact that for a finite group $H$,
$C^*_r(H)$ is a direct sum of matrix algebras and those have trivial ${\bf K_1}$-groups.
\end{remark}

\section{An example}\label{sec:example}

In this section $\oplus$, when applied to indices, will denote summation modulo $2$, and $\N$ will denote the positive integers.

The group $\Gamma = G_0 *_H G_1$ is defined as follows: first $H$ is given by the set of generators
\[
\{ h(i_1,i_2,\dotsc,i_n) : \text{$n\in\N$ and $i_k\in\{0,1\}$ for all $k\in\{1,\dotsc,n\}$}\},
\]
with the relations
\[
h(i_1,i_2,\dotsc,i_n)^2 = e \, \text{ for all $n\in\N$, $k\in\{1,\dotsc,n\}$, and $i_k \in \{0,1\}$},
\]
and for $n\geq k$
\begin{multline*}
h(i_1,\dotsc,i_k) h(j_1,\dotsc,j_n) h(i_1,\dotsc,i_k) = \\
\begin{cases}
h(j_1, \dotsc, j_k, j_{k+1} \oplus 1, \dotsc, j_n) & \text{if $n>k$ and $i_\ell = j_\ell$ for all $1\leq\ell\leq k$}, \\
h(j_1, \dotsc, j_k, j_{k+1}, \dotsc, j_n) & \text{otherwise}.
\end{cases}
\end{multline*}
Then define
\[
\Gamma=\langle H\cup\{ g_0, g_1 \}\rangle,
\]
with the additional relations
\begin{gather*}
g_0^2 = g_1^2 = e,\quad (g_0 h(1))^3 = e, \quad  (g_1 h(0))^3 = e, \\
g_0 h(1, 0, i_3, \dotsc, i_n) g_0 = h(0, i_3, \dotsc, i_n),
\quad 
g_0 h(1, 1, i_3, \dotsc, i_n) g_0 = h(1, 1, i_3, \dotsc, i_n), \\
g_1 h(0, 0, i_3, \dotsc, i_n) g_1 = h(0, 0, i_3, \dotsc, i_n),
\quad
g_1 h(0, 1, i_3, \dotsc, i_n) g_1 = h(1, i_3, \dotsc, i_n).
\end{gather*}
Finally, let
\[
G_0=\langle H\cup\{ g_0 \}\rangle
\quad\text{and}\quad
G_1=\langle H\cup\{ g_1 \}\rangle,
\]
so that $\Gamma = G_0 *_H G_1$.
Note also that
\[
H = \langle \{h(\underbrace{0,\dotsc,0}_{n \text{ times}}), n\in\N \}
\cup
\{h(1,\underbrace{0,\dotsc,0}_{n-1 \text{ times}}),n\in\N \} \rangle.
\]
\begin{lemma}
For every $g\in G_0\setminus H$, there exists $h\in H$ such that either $g=g_0h$ or $g=h(1)g_0h$,
and for every $g\in G_1\setminus H$, there exists $h\in H$ such that either $g=g_1h$ or $g=h(0)g_1h$.

Consequently, $[G_0 : H] = [G_1 : H] = 3$, and the sets
\begin{equation}\label{eq:coset-rep}
S_0=\{g_0,h(1)g_0\} \quad\text{and}\quad S_1=\{g_1,h(0)g_1\}.
\end{equation}
provide representatives for the nontrivial left cosets of $G_0/H$ and $G_1/H$, respectively.
\end{lemma}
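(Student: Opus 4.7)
The plan is to prove the statement for $G_0$; the $G_1$ case follows by the same argument with the roles of $0$ and $1$ interchanged. Set $T_0 = \{e, g_0, h(1)g_0\}$. I will show that $G_0 = T_0 \cdot H$ and that the three left cosets $H$, $g_0 H$, $h(1)g_0 H$ are pairwise distinct; together these give the normal form stated in the first part and imply $[G_0 : H] = 3$ with $S_0 = \{g_0, h(1)g_0\}$ as representatives for the nontrivial cosets.

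To establish $G_0 = T_0 H$, I would show that $T_0 H$ is closed under left multiplication by the generating set $H \cup \{g_0\}$. Closure under left multiplication by $g_0$ is an immediate check on each element of $T_0$, using $g_0^2 = e$ together with the consequence $g_0 h(1) g_0 = h(1) g_0 h(1)$ of the relation $(g_0 h(1))^3 = e$. Since $T_0 H$ is trivially closed under right multiplication by $H$, closure under left multiplication by $H$ reduces to showing that $h' g_0 \in T_0 H$ for every $h' \in H$. Let $K = \{h \in H : g_0 h g_0 \in H\}$, which is a subgroup of $H$. The defining relations of $\Gamma$ show that every generator $h(j_1,\dotsc,j_n)$ of $H$ with $(j_1,\dotsc,j_n) \neq (1)$ lies in $K$, while $h(1) \notin K$, since $g_0 h(1) g_0 = h(1) g_0 h(1)$ is a genuine length-three word in $G_0$. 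Granting the identity $H = K \cup h(1) K$, the desired closure follows: if $h' \in K$ then $h' g_0 = g_0(g_0 h' g_0) \in g_0 H$, and if $h' = h(1) k$ with $k \in K$ then $h' g_0 = h(1) g_0 (g_0 k g_0) \in h(1) g_0 H$.

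The identity $H = K \cup h(1) K$ is the technical heart of the argument. Let $K'$ be the subgroup of $H$ generated by $\{h(j_1,\dotsc,j_n) : (j_1,\dotsc,j_n) \neq (1)\}$, so $K' \subseteq K$ and $H = \langle K' \cup \{h(1)\}\rangle$. Applying the conjugation relations of $H$ with $k=1$ and $i_1 = 1$, a direct case analysis shows that for every generator $h(j_1,\dotsc,j_n)$ of $K'$, conjugation by $h(1)$ produces either $h(j_1,\dotsc,j_n)$ itself or, when $j_1 = 1$ and $n \geq 2$, the generator $h(1, j_2 \oplus 1, j_3, \dotsc, j_n)$; in both cases the result is a generator distinct from $h(1)$, hence lies in $K'$. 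Thus $h(1) K' h(1) \subseteq K'$, and using $h(1)^2 = e$ one may push every occurrence of $h(1)$ in a word representing an element of $H$ to one side, yielding $H = K' \cup h(1) K' \subseteq K \cup h(1) K$. The distinctness of the three cosets finally reduces to the non-memberships $g_0, h(1) g_0 \notin H$ and $g_0 h(1) g_0 \notin H$. I expect the main obstacle to be justifying these non-collapse statements (including $h(1) \notin K$ used above) rigorously from the presentation alone, since nothing in the relations forces them outright; the most natural route is to exhibit a faithful action of $G_0$, or equivalently to identify $G_0$ with the semidirect product $K' \rtimes \langle g_0, h(1)\rangle \cong K' \rtimes S_3$, a structure consistent with the presentation and which forces all the needed non-memberships.
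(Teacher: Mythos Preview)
Your approach is essentially the same as the paper's. The paper's proof also hinges on two observations: (a) every generator of $H$ other than $h(1)$ satisfies $hg_0 = g_0 h'$ for some $h' \in H$ (your subgroup $K$), and (b) conjugation by $h(1)$ sends each such generator to another generator $\neq h(1)$, so one can push all occurrences of $h(1)$ to the left (your identity $H = K' \cup h(1)K'$). The paper phrases (b) informally as ``we can move the $h(1)$'s to the left in the product'' and then reduces an arbitrary word $(g_0)h_1 g_0 h_2 \dotsm$ step by step; your closure-under-left-multiplication argument is a cleaner packaging of the same reduction.

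Where you go further than the paper is in flagging the distinctness of the three cosets. The paper's proof establishes only $G_0 \subseteq H \cup g_0 H \cup h(1)g_0 H$ and then asserts the index statement without verifying non-collapse. Your instinct that this needs an external model is correct, and the paper in effect supplies one later: in Remark~\ref{rmkGamma} it identifies $\Gamma$ with Le~Boudec's group $G(A_3,S_3)^{\star}$ acting on a $3$-regular tree, with $G_i$ realized as a vertex stabilizer written as an increasing union of wreath products $\Z_2 \wr \dotsb \wr \Z_2 \wr S_3$, the top $S_3$ being exactly $\langle g_0, h(1)\rangle$. So your proposed route via $K' \rtimes S_3$ is precisely the structure the paper has in mind, just not invoked at the point of this lemma.
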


\begin{proof}
First, if $h$ is one of the generators of $H$ and $h\neq h(1)$,
then the defining relations above show that there exists an $h'$ such that $hg_0=g_0h'$.
Moreover, for $h\in H$, with $h\neq h(1)$, we have $hh(1)=h(1)\cdot h(1)hh(1)=h(1)h'$ for some $h'\neq h(1)$,
that is, we can move the $h(1)$'s to the left in the product.
Hence, for any element $g=(g_0)h_1g_0h_2g_0\dotsm g_0h_n(g_0)\in G_0$, where $h_i\in H$,
we can combine the two observations above to find the required $h$.

A similar argument holds in $G_1$.
\end{proof}

Let us now denote the subgroups of $H$ with elements determined by a prescribed start by
\[
H(j_1,\dotsc,j_\ell)
= \langle \{ h(j_1,\dotsc,j_\ell,i_{\ell+1},\dotsc,i_n) : n\in\N, n\geq\ell, i_{\ell+1},\dotsc,i_n \in \{ 0,1 \} \} \rangle
\]
and the subgroups of $H(j_1,\dotsc,j_\ell)$ of elements having arguments of minimum length $k$ by
\[
H_k(j_1, \dotsc, j_\ell)
= \langle \{ h(j_1,\dotsc, j_\ell, i_{\ell+1},\dotsc,i_n) : n \in\N, n\geq k\geq\ell, i_{\ell+1},\dotsc,i_n \in \{ 0,1 \} \} \rangle.
\]

\begin{remark}\label{rem:Gammarelations}
Let $i,j$ be two sequences such that $h(i)$ and $h(j)$ commute.
For any $k,\ell$ greater than the respective lengths of $i,j$, we have
\[
\langle H_k(i)\cup H_\ell(j)\rangle = H_k(i)H_\ell(j)=H_\ell(j)H_k(i).
\]
Let $k\geq 1$ and note that $g_0h(1)g_0 \notin H$.
By using the defining relations for $\Gamma$, we compute
\begin{gather*}
g_0H_k(0)g_0=H_{k+1}(1,0) \\
h(1)g_0H_k(0)g_0h(1)=h(1)H_{k+1}(1,0)h(1)=H_{k+1}(1,1) \\
H\cap g_0H(1)g_0=H(0)H(1,1) \\
H\cap h(1)g_0H(1)g_0h(1)=h(1)(H\cap g_0H(1)g_0)h(1)=h(1)H(0)H(1,1)h(1)=H(0)H(1,0),
\end{gather*}
and then
\begin{gather*}
H\cap g_0Hg_0=H(0)H_2(1) \\
H\cap h(1)g_0Hg_0h(1)=h(1)(H\cap g_0Hg_0)h(1)=h(1)H(0)H_2(1)h(1)=H(0)H_2(1).
\end{gather*}
Finally,
\[
H\cap g_0H_k(0)H(1)g_0=g_0H_k(0)g_0\big(H\cap g_0H(1)g_0\big)=H_{k+1}(1,0)H(0)H(1,1),
\]
where the first equality follows from Dedekind's modular law for groups, and
\[
\begin{split}
H\cap h(1)g_0H_k(0)H(1)g_0h(1)&=h(1)(H\cap g_0H_k(0)H(1)g_0)h(1)\\
&=h(1)H_{k+1}(1,0)H(0)H(1,1)h(1) =H_{k+1}(1,1)H(0)H(1,0).
\end{split}
\]
Similarly,
\begin{gather*}
H\cap g_1Hg_1=H\cap h(0)g_1Hg_1h(0)=H(1)H_2(0) \\
H\cap g_1H(0)H_k(1)g_1=H_{k+1}(0,1)H(1)H(0,0) \\
H\cap h(0)g_1H(0)H_k(1)g_1h(0)=H_{k+1}(0,0)H(1)H(0,1).
\end{gather*}
\end{remark}

\begin{lemma}\label{lem:Gamma-K0K1}
We have $K_0 = H(0)$, $K_1 = H(1)$, and $\ker\Gamma = K_0 \cap K_1 = H(0) \cap H(1) = \{e\}$.
\end{lemma}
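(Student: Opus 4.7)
The plan is to prove $K_0 = H(0)$, $K_1 = H(1)$, and $H(0)\cap H(1) = \{e\}$ by direct computation with Remark~\ref{rem:Gammarelations} and the fixed-point relation \eqref{eq:K0-K1-relation}. The triviality of $H(0)\cap H(1)$ is a presentation-level fact: every defining relation of $H$ is internal to $H(0)$, internal to $H(1)$, or an instance of $h(i)h(j)h(i) = h(j)$, which holds whenever $i_1\neq j_1$. Hence $H(0)$ and $H(1)$ are normal, commute elementwise, and generate $H$, giving the internal direct product $H = H(0)\times H(1)$; the same argument one level down gives $H(1,0)\cap H(1,1) = \{e\}$ (and its $H(0)$-analogue), which is used below.

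For the forward inclusion $H(0)\subseteq K_0$, I would show by induction on $k$ that $g^{-1}H(0)g\subseteq H$ for every $g\in T_{0,k}$. The case $k=1$ is in Remark~\ref{rem:Gammarelations}: $g_0H(0)g_0 = H_2(1,0)$ and $h(1)g_0H(0)g_0h(1) = H_2(1,1)$ are both in $H$. The inductive step follows from two observations: $H_k(0)$ and $H_k(1)$ are normal in $H$ (so intermediate $h$-parts of coset representatives cause no trouble), and conjugating $H_k(1)$ (resp.\ $H_k(0)$) by $g_1$ or $h(0)g_1$ (resp.\ $g_0$ or $h(1)g_0$) lands inside $H_{k+1}(0)$ (resp.\ $H_{k+1}(1)$) by the relations $g_1 h(0,1,i_3,\ldots)g_1 = h(1,i_3,\ldots)$ and $g_0 h(1,0,i_3,\ldots)g_0 = h(0,i_3,\ldots)$. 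A symmetric argument gives $H(1)\subseteq K_1$.

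For the reverse inclusion I would prove by induction on $k\geq 2$ the companion statements $K_0\subseteq H(0)H_k(1)$ and $K_1\subseteq H(1)H_k(0)$. The base $k=2$ is $C_{0,1}\cap H = H(0)H_2(1)$ from Remark~\ref{rem:Gammarelations}. For the inductive step, assuming $K_1\subseteq H(1)H_k(0)$ and using \eqref{eq:K0-K1-relation} with $s = g_0$ and $s = h(1)g_0$, the Remark formulas
\[
H\cap g_0 H_k(0)H(1) g_0 = H_{k+1}(1,0)H(0)H(1,1),\quad H\cap h(1)g_0 H_k(0)H(1) g_0 h(1) = H_{k+1}(1,1)H(0)H(1,0)
\]
give $K_0\subseteq H(0)\cdot\bigl(H_{k+1}(1,0)H(1,1)\cap H_{k+1}(1,1)H(1,0)\bigr)$. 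The bracketed intersection is analysed inside the direct product $H(1,0)\times H(1,1)$: by uniqueness of factorization it collapses to $H_{k+1}(1,0)H_{k+1}(1,1) = H_{k+1}(1)$, so $K_0\subseteq H(0)H_{k+1}(1)$. The symmetric computation advances $K_1$. Taking $k\to\infty$ and using $\bigcap_k H_k(1) = \{e\}$ (no nontrivial element of $H(1)$ can be expressed using only generators of arbitrarily large minimal length) gives $K_0 = H(0)$, and symmetrically $K_1 = H(1)$; combined with $H(0)\cap H(1) = \{e\}$, this yields $\ker\Gamma = \{e\}$.

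The main obstacle is the bookkeeping of the inductive intersection, which rests on the second-level direct product $H(1,0)\times H(1,1)\subseteq H(1)$ (and its $H(0)$-analogue) via uniqueness of factorization; everything else follows from Remark~\ref{rem:Gammarelations} routinely.
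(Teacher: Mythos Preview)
Your proposal is correct and follows essentially the same computation as the paper's proof. The paper runs a single simultaneous induction proving the exact equalities $\bigcap_{i=0}^k C_{0,i}=H(0)H_{k+1}(1)$ and $\bigcap_{i=0}^k C_{1,i}=H(1)H_{k+1}(0)$ via the recursion $C_{0,k+1}=\bigcap_{g\in G_0\setminus H}gC_{1,k}g^{-1}$, while you separate the two inclusions, proving $H(0)\subseteq K_0$ directly and then $K_0\subseteq H(0)H_k(1)$ via the fixed-point relation \eqref{eq:K0-K1-relation}; in both cases the crux is the same intersection
\[
H_{k+1}(1,0)H(0)H(1,1)\ \cap\ H_{k+1}(1,1)H(0)H(1,0)=H(0)H_{k+1}(1)
\]
coming from Remark~\ref{rem:Gammarelations}. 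Your version has the virtue of making explicit the internal direct-product structure $H=H(0)\times H(1)$ and $H_2(1)=H(1,0)\times H(1,1)$ that justifies this intersection step and the final passage $\bigcap_k H(0)H_k(1)=H(0)$, both of which the paper uses tacitly.
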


\begin{proof}
We use the notation for $C_{j,k}$ from \eqref{eq:C-jk}, so that $C_{0,0}=C_{1,0}=H$,
and remark that for every $j,k$ we have
\[
C_{0,k+1}=\bigcap_{g\in G_0\setminus H}gC_{1,k}g^{-1}
\quad\text{and}\quad
C_{1,k+1}=\bigcap_{g\in G_1\setminus H}gC_{0,k}g^{-1}.
\]
In the following we will make use of Remark~\ref{rem:Gammarelations}, the coset representatives from \eqref{eq:coset-rep},
and the fact that every $C_{j,k}$ is invariant under conjugation by elements of $H$.
We compute that
\[
H \cap C_{0,1}
= H\cap\bigcap_{g\in G_0\setminus H}gHg^{-1}
= H \cap g_0Hg_0 \cap h(1)g_0Hg_0h(1)
= H(0)H_2(1)
\]
and
\[
H \cap C_{1,1}
= H\cap\bigcap_{g\in G_1\setminus H}gHg^{-1}
= H \cap g_1Hg_1 \cap h(0)g_1Hg_1h(0)
= H(1)H_2(0).
\]
Next, let $k\in\N$ and assume $\bigcap\limits_{i=0}^k C_{0,i}=H(0)H_{k+1}(1)$ and $\bigcap\limits_{i=0}^k C_{1,i}=H(1)H_{k+1}(0)$.
Then
\[
\begin{split}
\bigcap_{i=0}^{k+1} C_{0,i}
&= H\cap \bigcap_{i=0}^k \bigcap_{g\in G_0\setminus H} gC_{1,i}g^{-1} \\
&= H\cap g_0\Big(\bigcap_{i=0}^k C_{1,i}\Big)g_0 \cap h(1)g_0\Big(\bigcap_{i=0}^k C_{1,i}\Big)g_0h(1) \\
&= H\cap g_0H(1)H_{k+1}(0)g_0 \cap h(1)g_0H(1)H_{k+1}(0)g_0h(1) \\
&= H_{k+2}(1,0)H(0)H(1,1) \cap H_{k+2}(1,1)H(0)H(1,0) \\
&= H(0)H_{k+2}(1),
\end{split}
\]
and similarly
\[
\bigcap_{i=0}^{k+1} C_{1,i}=H(1)H_{k+2}(0).
\]
Hence, we get that
\[
K_0=\bigcap_{k\geq 0}C_{0,k}=\bigcap_{k\geq 0}H(0)H_{k+1}(1)=H(0)
\]
and
\[
K_1=\bigcap_{k\geq 0}C_{1,k}=\bigcap_{k\geq 0}H(1)H_{k+1}(0)=H(1),
\]
and thus $\ker\Gamma=K_0 \cap K_1=H(0)\cap H(1)=\{e\}$.
\end{proof} 

\begin{theorem}\label{thm:gamma-nonsimple}
The group $\Gamma$ defined above has the unique trace property, but is not $C^*$-simple.
\end{theorem}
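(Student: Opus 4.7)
The plan is to address the two assertions separately. For the unique trace property, I would combine Proposition~\ref{prop:amalgam ut} with Lemma~\ref{lem:Gamma-K0K1}: the lemma gives $\ker\Gamma=\{e\}$, the trivial group trivially has the unique trace property (its reduced $C^*$-algebra is $\C$), and the proposition transfers this to $\Gamma$. For the failure of $C^*$-simplicity, I would apply Proposition~\ref{prop:cstarsimple-amalgam}; the group $\Gamma$ is countable and $K_0=H(0)\neq\{e\}$ by the same lemma, so the only remaining hypothesis to verify is amenability of $H$.

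To prove $H$ is amenable, I would establish the stronger property that $H$ is locally finite. For $n\in\N$, let $H_n$ denote the subgroup of $H$ generated by $\{h(i_1,\dotsc,i_k):1\leq k\leq n,\ i_\ell\in\{0,1\}\}$; since every finitely generated subgroup of $H$ lies in some $H_n$, it suffices to show that each $H_n$ is finite. Set $A_n=\langle h(i):|i|=n\rangle$. Because the defining conjugation relations send length-$n$ generators to length-$n$ generators, $A_n$ is normal in $H_n$; and because any two length-$n$ generators fall into the ``otherwise'' case of the conjugation relations (the condition $|j|>|i|$ fails when both lengths equal $n$), $A_n$ is a quotient of $(\Z_2)^{2^n}$ and in particular finite. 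A direct inspection then shows that every defining relation of $H$ involving a length-$n$ generator collapses to a triviality upon setting all length-$n$ generators equal to $e$, so the quotient $H_n/A_n$ is presented by exactly the relations of $H_{n-1}$; therefore $H_n/A_n\cong H_{n-1}$ and $|H_n|\leq|A_n|\cdot|H_{n-1}|$ is finite by induction, starting from $H_1\cong\Z_2\times\Z_2$.

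The step requiring the most care is the last one, namely verifying that no defining relation of $H$ involving a length-$n$ generator forces a new constraint on the length-$<n$ generators in the quotient $H_n/A_n$. This reduces to the observation that every such relation is either $h(i)^2=e$ with $|i|=n$ (which becomes $e=e$) or is of the form $h(i)h(j)h(i)=h(j')$ in which at least one of $|j|,|j'|$ equals $n$; in that case the conjugation rules force both to have length $n$, so both sides land in $A_n$ and the relation reduces in the quotient to either $h(i)^2=e$ for some $|i|<n$ (already a relation in $H_{n-1}$) or to $e=e$. Once $H$ is locally finite and therefore amenable, Proposition~\ref{prop:cstarsimple-amalgam} completes the argument.
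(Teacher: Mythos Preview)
Your proof is correct and follows the same overall strategy as the paper: invoke Lemma~\ref{lem:Gamma-K0K1} together with Proposition~\ref{prop:amalgam ut} for the unique trace property, and Proposition~\ref{prop:cstarsimple-amalgam} (using countability of $\Gamma$, nontriviality of $K_0$, and amenability of $H$) for the failure of $C^*$-simplicity. The only difference is that the paper simply asserts that $H$ is locally finite, whereas you supply a complete inductive argument via the normal series $A_n\lhd H_n$ with $H_n/A_n\cong H_{n-1}$; this extra detail is correct and fills in what the paper leaves to the reader.
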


\begin{proof}
By Lemma~\ref{lem:Gamma-K0K1} we have $\ker\Gamma=\{e\}$,
so Proposition~\ref{prop:amalgam ut} gives that $\Gamma$ has the unique trace property.
Moreover, $\Gamma$ is countable, $H$ is amenable since it is locally finite, and $K_0$ and $K_1$ are nontrivial.
Hence, it follows from Proposition~\ref{prop:cstarsimple-amalgam} that $\Gamma$ is not $C^*$-simple.
\end{proof}

Note that $\Gamma$ contains uncountably many amenable (abelian) subgroups,
for example the ones generated by subsets of
\[
\{h(\underbrace{0,\dotsc,0}_{n\text{ times}},1) \mid n\in\N)\},
\]
so \cite[Theorem~1.7]{BKKO} does not apply.

We can find an explicit free nonabelian subgroup of $\Gamma$ by
checking that $\langle g_0h(1),g_1h(0) \rangle$ is isomorphic to $\Z_3*\Z_3$,
which clearly contains a nonabelian free group as a subgroup.

Finally, we remark that $\langle K_0 \cup K_1\rangle=H$, and that the normal closure of $H$ is all of $\Gamma$.

\begin{proposition}\label{prop:simple-by-finite}
Let
\[
\theta\colon \Gamma \to \Z/2\Z \times \Z/2\Z
\]
be the group homomorphism defined on generators by
\[
\theta(h(0)) = (-1,1),\quad \theta(h(1)) = (1,-1),\quad \theta(g_0) = (1,-1),\quad \theta(g_1) = (-1,1).
\]
Set $\Gamma'=\ker\theta$.
Then $\Gamma'$ is simple.
\end{proposition}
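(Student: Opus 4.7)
The plan is to first verify that $\theta$ legitimately extends to a group homomorphism, then identify $\Gamma'$ with the commutator subgroup of $\Gamma$, decompose it as an amalgamated free product via Bass-Serre theory, and finally invoke a simplicity criterion for the resulting action. The main obstacle will be the last step, where the rigid structure of the specific amalgam must be leveraged.

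First I would check well-definedness of $\theta$. Since the target $\Z/2\Z \times \Z/2\Z$ is abelian of exponent $2$, all conjugation relations become trivial in the image, so the $H$-relations $h(i)^2=e$ and the $h$-$h$ conjugation relations are automatically satisfied, as are the $g_i$-conjugation-of-$h(j)$ relations. The order-3 relations $(g_0h(1))^3=(g_1h(0))^3=e$ combined with the exponent-2 relations force $\theta(g_0)=\theta(h(1))$ and $\theta(g_1)=\theta(h(0))$, matching the given prescription; since $\theta(h(0))$ and $\theta(h(1))$ generate the target, $\theta$ is surjective. Performing exactly this computation inside $\Gamma^{\mathrm{ab}}$ itself (using that $3x=x$ when $2x=0$) shows $\Gamma^{\mathrm{ab}}\cong\Z/2\Z\times\Z/2\Z$, so $\Gamma'=[\Gamma,\Gamma]$ coincides with the commutator subgroup.

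Next I would use Bass-Serre theory. Since $\Gamma=G_0*_HG_1$ and $\Gamma'\triangleleft\Gamma$ of index $4$, the subgroup $\Gamma'$ acts on the Bass-Serre tree $T$ of $\Gamma$, and the number of $\Gamma'$-orbits of each vertex type (respectively edges) equals $[\Gamma:\Gamma'\!\cdot\! G_i]$ (respectively $[\Gamma:\Gamma'\!\cdot\! H]$). Because $\theta$ restricted to each of $G_0$, $G_1$, and $H$ is already surjective onto $\Z/2\Z\times\Z/2\Z$, each of these indices equals $1$. The quotient graph of groups is therefore a single edge joining two vertices, yielding $\Gamma'=G_0'*_{H'}G_1'$ with $G_i'=G_i\cap\Gamma'$ and $H'=H\cap\Gamma'$; a short index computation gives $[H:H']=4$ and $[G_i':H']=3$.

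For the simplicity of $\Gamma'$, the main difficulty, my plan is to show that every nontrivial normal subgroup $N\trianglelefteq\Gamma'$ equals $\Gamma'$, exploiting both the amalgam decomposition and the self-similar structure of $H$. I would first establish that $\Gamma'$ is perfect (removing abelian obstructions), and then produce from an arbitrary nontrivial $x\in N$ enough conjugates under $\Gamma'$ to recover a generating set. One natural avenue is to interpret the $h(i)$ as subtree-swap involutions and $g_0,g_1$ as piecewise prefix-rewriting bijections of the Cantor set $\{0,1\}^{\N}$, realising $\Gamma$ inside Thompson's group $V$, and then to invoke a Matui- or Nekrashevych-type theorem giving simplicity of the commutator subgroup for a group acting minimally and expansively on a Cantor set. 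The crux is verifying the required transitivity and expansivity hypotheses for this particular $\Gamma$; alternatively, a direct elementary route through amalgam normal forms would use the explicit coset representatives \eqref{eq:coset-rep} and the relations in Remark~\ref{rem:Gammarelations} to show that the normal closure in $\Gamma'$ of any nontrivial element already contains $H'$, and thence all of $G_0'$ and $G_1'$.
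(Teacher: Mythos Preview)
Your preliminary steps are sound: the verification that $\theta$ is well-defined is correct, and your identification $\Gamma'=[\Gamma,\Gamma]$ (via the computation of $\Gamma^{\mathrm{ab}}$) as well as the Bass--Serre decomposition $\Gamma'=G_0'*_{H'}G_1'$ with $[G_i':H']=3$ are both valid, and more structural than anything the paper actually uses. The genuine gap is in the simplicity step itself.

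Route~(a), via a Matui--Nekrashevych theorem, will not go through as stated. Those results give simplicity of the commutator subgroup of the \emph{topological full group} of a minimal (expansive, purely infinite, \dots) \'etale groupoid, not of an arbitrary countable group acting minimally on a Cantor set. Even granting a faithful realisation of $\Gamma$ inside some Thompson-type group, the image would be a proper subgroup, and simplicity does not pass to subgroups; to invoke such a theorem you would need $\Gamma$ itself to \emph{coincide} with a topological full group, which it does not (the full group of the induced boundary action is strictly larger). This obstruction is real: the paper observes in Remark~\ref{rmkGamma} that the available off-the-shelf simplicity criterion for these tree groups, \cite[Corollary~4.20]{Boudec2}, does not cover $\Gamma'$ because $A_3$ is not generated by its point stabilisers.

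Route~(b), showing directly that the normal closure in $\Gamma'$ of any nontrivial element already contains a generating set, is exactly what the paper does. The paper writes down explicit generators of $\Gamma'$ --- the products $h(0)h(0,i_1,\dots,i_{2k})$, $h(0)h(1,i_1,\dots,i_{2k-1})$, $h(1)h(1,i_1,\dots,i_{2k})$, $h(1)h(0,i_1,\dots,i_{2k-1})$, together with $h(0)g_1$ and $h(1)g_0$ --- and then asserts that each is a product of $\Gamma'$-conjugates of an arbitrary nontrivial $a\in N$, explicitly leaving the ``rather tedious'' computations out. So your fallback route and the paper's route coincide, and neither carries out the actual computation; the Bass--Serre decomposition of $\Gamma'$ you set up plays no role in it.
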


\begin{proof}
Below we only provide the idea of the argument and omit most of the technicalities.

Observe that since $t^{-1} h(0) t = h(0,i_1, \dotsc, i_{2k})$ for a suitable element $t \in T_{0,2k}$,
it follows that $\theta(h(0,i_1,\dotsc,i_{2k}))=\theta(h(0))=(-1,1)$.
Therefore 
\[
h(0) h(0,i_1, \dotsc, i_{2k}) \in \Gamma'
\]
for all $k\in\N$, $j\in\{1,\dotsc,2k\}$, and $i_j\in\{0,1\}$.
Analogous arguments yield 
\[
h(0) h(1,i_1, \dotsc, i_{2k-1}),\; 
h(1) h(1,i_1, \dotsc, i_{2k}),\;
h(1) h(0,i_1, \dotsc, i_{2k-1})\;
\in\Gamma'
\]
for all $k\in\N$, $j\in\{1,\dotsc,2k\}$, and $i_j\in\{0,1\}$.

Notice also that $g_1h(0),h(0)g_1,g_0h(1),h(1)g_0 \in \Gamma'$.
One can now check that
\begin{multline*} 
\Gamma' = \langle
\{ h(0) h(0,i_1,\dotsc,i_{2k}) , h(0) h(1,i_1,\dotsc,i_{2k-1}), h(1) h(1,i_1,\dotsc,i_{2k}), \\
h(1) h(0,i_1,\dotsc,i_{2k-1}) , \forall k\in\N, \forall i_j\in\{0,1\}, \forall j \} \cup \{ h(0)g_1,h(1)g_0\}
\rangle.
\end{multline*}

Let $N\neq\{e\}$ be a normal subgroup of $\Gamma'$ and pick an element $a \in N\setminus\{e\}$.
The remainder of the proof is about showing that each of the generators of $\Gamma'$ listed above
can be described by a suitable product of conjugates of $a$ by elements of $\Gamma'$.
Since these computations are rather tedious, we leave them out.
\end{proof}

\begin{remark}
If $G$ is an exact group with stable rank one and the unique trace property,
then $G$ is $C^*$-simple by \cite[Theorem~2.1]{B-O}.
Let $\Gamma=G_0*_H G_1$ be the group defined above.
Since $H$ is amenable and has finite index in $G_0$ and $G_1$, both these groups are amenable,
and therefore $\Gamma$ is exact by \cite[Corollary~3.3]{Dykema}.
Hence, $\Gamma$ does \emph{not} have stable rank one by Theorem~\ref{thm:gamma-nonsimple}.
\end{remark}

\begin{remark}\label{rmkGamma}
It was pointed out to us by Adrien Le~Boudec that the group $\Gamma$ is isomorphic to the group $G(A_3,S_3)^{\star}$,
which is one of the examples from \cite[Section~5]{Boudec}.

Let $S_3$ and $A_3$ denote the symmetric and alternating group, respectively, on a three-element set.
Consider a $3$-regular tree $T$ and color the edges $\{1,2,3\}$, so that neighboring edges have different colors.
Let $\sigma(g,v) \in S_3$ be the permutation of the three colors induced in the natural way by the element $g\in\operatorname{Aut}(T)$.
Then $G(A_3,S_3) < \operatorname{Aut}(T)$ is the group of all automorphisms $g$ of $T$ such that $\sigma(g,v) \in A_3$ for all but finitely many vertices $v$.
Then $G(A_3,S_3)^{\star}$ is the subgroup of $G(A_3,S_3)$ of index two preserving the natural bipartition of vertices of $T$.

To see this, remark that by Bass-Serre's theory (cf.\ \cite[I.4 Theorem~6]{Serre}) this index two subgroup is an amalgamated product $\bar{G_0} *_{\bar{H}} \bar{G_1}$,
where $\bar{G_0}$ and $\bar{G_1}$ are the stabilizers of two adjacent vertices (we will call them $v_0$ and $v_1$),
and $\bar{H}$ is the stabilizer of the edge between them (we will call it $e$).
Then, using the notation from \cite[Subsection~3.1]{Boudec2}, we may write $\bar{G_i}$ as an increasing union
\[
G(S_3)_{v_i} = \bigcup_{n\geq 1} K_n(v_i),\quad i=0,1,
\]
where $K_n(v_i)$ is the subgroup of $G(A_3,S_3)$ that fixes the vertex $v_i$
and has $\sigma(g,v) \in A_3$ for all $g \in K_n(v_i)$ and all $v$ that are at a distance larger than $n$ from $v_i$.
With this notation, the element $h(0,i_2, \dotsc, i_n)$ acts on $T$ by swapping the two half-trees,
emanating from a vertex (we will call it $v(0,i_2, \dotsc, i_n)$) that is at a distance $n-1$ from $v_0$ and at a distance $n$ from $v_1$,
and not intersecting the geodesic between $v_0$ and $v(0,i_2, \dotsc, i_n)$.
Clearly, $\sigma(h(0,i_2,\dotsc,i_n), v(0,i_2,\dotsc,i_n)) \notin A_3$, because it leaves one edge (therefore one color) fixed.
The matching of the other vertices of the half-trees is defined so the local permutations belong to $A_3$.
This matching is just a matter of orientation of the tree.
Adding the element $g_1$ to the picture, we see that $K_n(v_0)$ is isomorphic to the wreath product $\Z_2 \wr \dotsb \wr \Z_2 \wr S_3$ ($n-1$~factors of $\Z_2$),
where the top elements beneath the $S_3$ factor are $h(1)$, $h(0,0)$, and $h(0,1)$.
Likewise $K_n(v_1)$ is isomorphic to $\Z_2 \wr \dotsb \wr \Z_2 \wr S_3$ ($n-1$~factors of $\Z_2$),
where the top elements beneath the $S_3$ factor are $h(0)$, $h(1,0)$ and $h(1,1)$.
In this way, we see that $\bar{G_i}$ is isomorphic to $G_i$, $i=0,1$, and therefore $\Gamma$ is isomorphic to $G(A_3,S_3)^{\star}$. 

The simplicity of the group $\Gamma'$ is not covered by \cite[Corollary~4.20]{Boudec2}, because $A_3$ is not generated by its point stabilizers.

We finally note that $G(A_3,S_3)$ is isomorphic to $\Gamma \rtimes \Z_2$,
where $\Z_2$ acts on $\Gamma$ by interchanging the indices $0$ and $1$ of all generating elements of $\Gamma$.
\end{remark}

\section{Actions of free products with amalgamation}\label{sec:actions}

Let $G$ be any group acting on a space $X$, and let us first recall some notation.
The stabilizer subgroup of an element $x\in X$ is $G_x=\{g\in G:gx=x\}$,
and the fixed-point set of $g\in G$ is $X^g=\{x\in X:gx=x\}$.
The \emph{kernel} of the action is the set of elements in $G$ acting trivially on $X$, namely
\[
\ker(G\curvearrowright X)=\bigcap_{x\in X}G_x=\{g\in G:X^g=X\}.
\]
Note that for every $x\in X$ and all $s,g\in G$,
we have $sG_xs^{-1}=G_{sx}$ and $X^{sgs^{-1}}=sX^g$,
so the kernel is a normal subgroup of $G$.
The action is called \emph{faithful} when the kernel is trivial, i.e.,
if for every $g\in G\setminus\{e\}$ there exists $x\in X$ such that $gx\neq x$,
and \emph{strongly faithful} if for every finite set $F\subseteq G\setminus\{e\}$
there exists $x\in X$ such that $gx\neq x$ for all $g\in F$.

Furthermore, the action of $G$ on $X$ is called \emph{free} if whenever $g\in G$, $x\in X$, and $gx=x$, then $g=e$.
Since $\langle G_x : x\in X \rangle$ is invariant under conjugation, it is a normal subgroup of $G$,
which coincides with the subgroup of $G$ generated by $\{g\in G:X^g\neq\varnothing\}$.
This subgroup is the so-called ``join'' of $\{G_x:x\in X\}$, while $\ker(G\curvearrowright X)$ is the ``meet'' of $\{G_x:x\in X\}$.
Obviously, $G$ acts freely on $X$ if and only if this subgroup is trivial.

Let $X$ be a topological space, and suppose that $G$ acts continuously on $X$, that is, by homeomorphisms.
For every $x\in X$ define $G_x^o$ as the subgroup of $G_x$ consisting of all elements that fix a neighborhood of $x$ pointwise.
We notice that $g\in G_x^o$ for some $x\in X$ if and only if $X^g$ has nonempty interior,
and we define the \emph{interior} of the action as
\begin{equation}\label{eq:interior}
\operatorname{int}(G\curvearrowright X)=\langle G_x^o : x\in X\rangle=\langle\{g\in G:X^g\text{ has nonempty interior}\}\rangle.
\end{equation}
Then, by using the identity $X^{sgs^{-1}}=sX^g$,
we see that $\{g\in G:X^g\text{ has nonempty interior}\}$ is invariant under conjugation.
Indeed, if $X^g$ contains a nonempty open subset $V$, then $sV$ is a nonempty open subset of $sX^g=X^{sgs^{-1}}$.
Therefore, $\operatorname{int}(G\curvearrowright X)$ is automatically a normal subgroup of $G$.
One may also check that $sG_x^os^{-1}=G_{sx}^o$ for every $x\in X$ and $s\in G$.
We say that the action is \emph{topologically free} if $X^g$ has empty interior for every $g\in G\setminus\{e\}$,
that is, if the interior is trivial.
Finally, we note that for a topological space $X$, the interior and kernel of an action corresponds to the join and meet,
respectively, of $\{G_x^o:x\in X\}$.

\medskip

Now, fix a nondegenerate amalgam $G=G_0*_H G_1$ and let $T$ denote its Bass-Serre tree
(cf.~\cite{Serre}, see also \cite{HP} and references therein).
Then $T$ has vertex set $G/G_0 \sqcup G/G_1$ and (geometric) edge set $G/H$.
Two vertices in $T$ are adjacent if either of the form
\[
(g_0)g_1\dotsm g_{2n-1} G_0 \xrightarrow{(g_0)g_1\dotsm g_{2n-1}g_{2n}H} (g_0)g_1\dotsm g_{2n-1}g_{2n} G_1
\]
or of the form (for a transversal edge)
\[
(g_0)g_1\dotsm g_{2n} G_1 \xrightarrow{(g_0)g_1\dotsm g_{2n}g_{2n+1}H} (g_0)g_1\dotsm g_{2n}g_{2n+1} G_0
\]
for $g_i\in G_{i\pmod 2}\setminus H$.

Let $V$ be the set of vertices and $E$ the set of edges of $T$, and let $s,r\colon E\to V$ denote the source and range maps.
Given any two vertices $v,w$, there are exactly two paths between them (one starting in $v$ and ending in $w$, and one in the opposite direction),
and the length of these paths is the combinatorial distance $d(v,w)$.
A \emph{ray} in $T$ is a sequence $(x_n)_{n=0}^\infty$ of vertices,
which is geodesic in the sense that $d(x_m,x_n)=\lvert m-n\rvert$ for all $m,n$ (i.e., $x_{n+2}\neq x_n$ for all $n$).
Moreover, given two rays $(x_n)_{n=0}^\infty$ and $(y_n)_{n=0}^\infty$ in $T$, we say they are \emph{cofinal},
and write $(x_n)_{n=0}^\infty \sim (y_n)_{n=0}^\infty$ if there exist integers $k$ and $N$ such that $y_n=x_{n+k}$ for all $n>N$.
Define the boundary $\partial T$ of the Bass-Serre tree $T$ as the set of equivalence classes of cofinal rays.

For $e\in E$, define $Z_0(e)=\{v\in V : d(v,s(e))>d(v,r(e))\}$, i.e., the set of all vertices that are closer to the endpoint of $e$ than the starting point.
Moreover, define $Z_\infty(e)$ as the set of all rays $(x_n)_{n=0}^\infty$ such that $x_j=s(e)$ and $x_{j+1}=r(e)$ for some $j\geq 0$,
and then define $Z_B(e)\subset\partial T$ as $Z_\infty(e)/\sim$.
Finally, set $Z(e)=Z_0(e) \cup Z_B(e)$.
The family of all finite intersections of sets from the collection $\{Z(e) : e\in E\}$
forms a base of compact clopen sets for a totally disconnected compact Hausdorff topology on $V\cup\partial T$,
sometimes called the ``shadow topology'' on $V\cup\partial T$.
We refer to \cite[Section~4, especially Proposition~4.4]{MS} in this regard
(there it is assumed that $T$ is countable, but their proofs hold also without this hypothesis, although then the topology is not metrizable).

Moreover, by removing an edge from $T$, we get two components, so-called \emph{half-trees}.
An \emph{extended half-tree} is a half-tree together with all its associated boundary points.
In this terminology, as explained in \cite[Section~4.3]{Boudec-Bon}, the shadow topology is generated by all the extended half-trees of $V\cup\partial T$.

Next, define $F\subseteq V$ as the set of all vertices $v$ such that $s^{-1}(v)=r^{-1}(v)$ is finite, i.e., only finitely many edges start and end in $v$.
The following can be deduced from sections of \cite{Boudec-Bon,MS} mentioned above:
\begin{proposition*}[new]
The closure $\overline{\partial T}$ of $\partial T$ in $V\cup\partial T$ is $(V\setminus F)\cup\partial T$, and is compact, minimal, and $G$-invariant.
Moreover, $\partial T$ is closed in $V\cup\partial T$ if and only if $F=V$, if and only if $T$ is locally finite, if and only if $H$ has finite index in both $G_0$ and $G_1$.
\end{proposition*}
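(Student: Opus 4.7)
The plan is to handle the three assertions in turn: identifying $\overline{\partial T}$ as $(V \setminus F) \cup \partial T$, verifying that this set is compact, minimal, and $G$-invariant, and finally deducing the equivalences in the moreover clause.

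The key observation for identifying the closure is that a vertex $v \in V$ lies in $F$ if and only if $\{v\}$ is open in $V \cup \partial T$. On one hand, if $v$ has finite valency with incident edges $e_1, \dotsc, e_n$, then the intersection of the $n$ extended half-trees containing $v$ (one per $e_i$) is exactly $\{v\}$, since every other point of $V\cup\partial T$ is separated from $v$ by some $e_i$. On the other hand, if $v$ has infinite valency, then any basic open neighborhood of $v$ is a finite intersection of extended half-trees, cutting off only finitely many branches at $v$; the remaining (infinitely many) branches contain rays by nondegeneracy of the amalgam, hence boundary points, so $\{v\}$ is not open and $v\in\overline{\partial T}$. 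Consequently $F$ is open, $F\cap\overline{\partial T}=\varnothing$, and $V\setminus F\subseteq\overline{\partial T}$, giving $\overline{\partial T}=(V\setminus F)\cup\partial T$.

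Compactness of $\overline{\partial T}$ is then immediate, as a closed subset of the compact Hausdorff space $V\cup\partial T$; $G$-invariance is also immediate, since tree automorphisms preserve valencies (hence $F$) and obviously preserve $\partial T$. For minimality, I would reduce to showing that for every $x\in\overline{\partial T}$ and every extended half-tree $\tilde H$, there exists $g\in G$ with $gx\in\tilde H$. The two ingredients are (a)~every $x$ lies in some extended half-tree $\tilde H'$ of the same orientation type as $\tilde H$ (for $x$ a boundary point, take a side cut out by any edge traversed by a representative ray; for $x$ a vertex of infinite valency, take any sufficiently distant edge of appropriate orientation and the side containing $x$), and (b)~$G$ acts transitively on the edge set $E=G/H$ preserving the canonical orientation, hence transitively on extended half-trees of each orientation type. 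Combining (a) and (b) produces $g\in G$ with $g\tilde H'=\tilde H$, whence $gx\in\tilde H$, proving density of the orbit of $x$.

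For the moreover clause, $\partial T$ is closed iff $V\setminus F=\varnothing$, iff every vertex has finite valency, iff $T$ is locally finite. Since the vertex $gG_i$ has exactly $[G_i:H]$ neighbors (indexed by the nontrivial left cosets of $H$ in $G_i$), local finiteness is equivalent to $H$ having finite index in both $G_0$ and $G_1$. The only substantive step in the plan is minimality, and the potential subtlety there is ensuring that the orientation of $\tilde H'$ matches that of $\tilde H$ when invoking edge-transitivity; the remaining parts are routine topology once the characterization of isolated points of $V\cup\partial T$ as elements of $F$ is in hand.
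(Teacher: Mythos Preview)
Your plan is sound, and in fact the paper does not supply a proof of this proposition at all: it simply states that ``the following can be deduced from sections of \cite{Boudec-Bon,MS} mentioned above.'' So your self-contained argument is more than the paper offers, and there is nothing to compare it against beyond noting that it recovers what those references establish.

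A couple of small remarks. First, in the moreover clause you write that the vertex $gG_i$ has $[G_i:H]$ neighbors ``indexed by the nontrivial left cosets of $H$ in $G_i$''; drop the word \emph{nontrivial}, since the trivial coset $H$ itself also gives an incident edge (the count $[G_i:H]$ is correct, only the parenthetical is off). Second, your reduction for minimality---that it suffices to hit every extended half-tree---deserves one sentence of justification: any nonempty open subset of $\overline{\partial T}$ meets $\partial T$ by density, and at a boundary point the sets $Z(e)$ for edges $e$ along any ray to that point form a nested neighborhood base, so the open set contains some $Z(e)\cap\overline{\partial T}$. With that said, your treatment of the orientation issue (that $G$ is transitive on oriented edges of each of the two types, not on all oriented edges) is exactly the right care to take, and the rest is routine.
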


Henceforth, we fix sets $S_0$ and $S_1$ of representatives for the nontrivial left cosets of $G/G_0$ and $G/G_1$, respectively.
Then every $x\in \partial T$ can be uniquely represented by an infinite word $(g_0)g_1g_2\dotsm$,
where $g_i\in S_{i\pmod 2}$,
that is, we take either $x_0=g_0G_1$, $x_1=g_0g_1G_0$ etc.\ or $x_0=G_1$, $x_1=g_1G_0$, etc.,
i.e., every class of cofinal rays will be represented by the unique ray in the class starting with $gG_1$ for $g\in S_0\cup\{e\}$.

The boundary $\partial T$ becomes a totally disconnected locally compact Hausdorff space when equipped with the subspace topology coming from the shadow topology on $V\cup\partial T$.
This topology is generated by basic clopen sets $U((g_0)g_1\dotsm g_n)$, where $n\geq 0$ and $g_i\in S_{i\pmod 2}$,
consisting of all equivalence classes of cofinal rays that are identified with infinite words starting with $(g_0)g_1\dotsm g_n$.

The amalgam $G$ acts on its Bass-Serre tree $T$ by left translation, that is,
the action of $s\in G$ on vertices is given by $s\cdot gG_0=sgG_0$, $s\cdot gG_1=sgG_1$ and on edges by $s\cdot gH=sgH$.
Clearly, this also induces an action of $G$ on the boundary of its Bass-Serre tree $\partial T$.

\begin{lemma*}[new]
Let $g,s\in G$, and suppose that $s$ fixes $U(g)$ pointwise.
Then $sgH=gH$.
Consequently, $s$ fixes every vertex in any ray coming from an infinite word starting with $g$.
\end{lemma*}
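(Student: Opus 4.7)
The plan is to exploit the fact that $s$ fixes uncountably many boundary points of the Bass--Serre tree $T$ to conclude that $s$ acts elliptically on $T$, with fixed subtree $T^s$ containing the entire forward half-tree at the edge $gH$, and then use a short edge-permutation argument at the vertex $v:=gG_?$ at the end of the finite word $g$ to obtain $sgH=gH$.

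First I show $s$ is elliptic on $T$. Nondegeneracy $([G_0:H]-1)([G_1:H]-1)\geq 2$ forces $U(g)$ to be uncountable, so we may pick three distinct boundary points in it; since a hyperbolic tree automorphism has exactly two fixed boundary points (the two endpoints of its axis), $s$ cannot be hyperbolic, so it is elliptic and its fixed vertex set $T^s$ forms a non-empty subtree of $T$. Moreover, for any two distinct $x,y\in U(g)$ the bi-infinite geodesic $[x,y]$ is preserved by $s$ and $s$ acts on it as an isometry fixing both ends in $\partial T$; ellipticity then forces the translation length to be zero, so $s$ fixes $[x,y]$ pointwise and $[x,y]\subseteq T^s$.

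Next I show $T^s$ contains every vertex of the forward half-tree $H_g$ (the component of $T\setminus\{gH\}$ containing $v$). For each $u\in H_g$, if $u$ has at least two children in $H_g$ (viewing $H_g$ as rooted at $v$), I extend in two different directions to obtain distinct $x,y\in U(g)$ with $u\in[x,y]$. Otherwise $u$ has a unique child, which happens precisely when $[G_j:H]=2$ for the $j$ with $u\in G/G_j$; in that case I pass to the unique child $u_1$, which by nondegeneracy satisfies $[G_{1-j}:H]\geq 3$ and hence has at least two children, pick $x,y$ diverging at $u_1$, and observe that both rays share the edge through $u$, so again $u\in[x,y]\subseteq T^s$. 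In particular $v$ and each of its children belong to $T^s$.

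To conclude, $s$ fixes $v$ and hence permutes the $[G_?:H]$ edges incident to $v$. The $[G_?:H]-1$ forward edges at $v$ have both endpoints in $T^s$, so $s$ fixes each of them; the one remaining edge at $v$ is $gH$, which therefore must also be fixed, yielding $sgH=gH$. Consequently $s$ fixes the other endpoint $v':=gG_{?'}$ of $gH$, so every vertex of $\{v'\}\cup H_g$ lies in $T^s$. This gives the consequence, since the vertices of a ray coming from an infinite word starting with $g$ that lie in $v'$, $v$, or beyond (equivalently, the vertices from position $n-1$ onwards) are all contained in $\{v'\}\cup H_g$ and thus fixed by $s$. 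The main technical subtlety is the case $[G_j:H]=2$, where some vertices of $H_g$ are bivalent and one must descend one extra step to locate divergent rays; nondegeneracy is precisely what makes this descent always sufficient.
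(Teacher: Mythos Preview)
Your geometric approach via the elliptic/hyperbolic dichotomy and the fixed subtree $T^s$ is sound in outline and genuinely different from the paper's sketch, which proceeds combinatorially: assuming $sgH\neq gH$, it exhibits an explicit infinite word beginning with $g$ whose cofinality class is moved by $s$, splitting into cases according to the last letter of $g$ and the first and last letters of $g^{-1}sg$. Your route trades that normal-form case analysis for standard tree geometry, which is a reasonable exchange. However, one step in your argument is incorrect as written.

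In the bivalent case you pick $x,y\in U(g)$ diverging at the unique child $u_1$ of $u$ and assert that ``both rays share the edge through $u$, so again $u\in[x,y]$''. This implication is backwards. The bi-infinite geodesic $[x,y]$ is the union of the two rays emanating from the divergence point $u_1$ towards $x$ and towards $y$; the common initial segment of the representing rays, which is exactly where $u$ sits, is the part that is \emph{not} contained in $[x,y]$. So this step does not place $u$ in $T^s$, and in particular when $v$ itself is bivalent you have not yet shown $v\in T^s$, which your final edge-count at $v$ requires.

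The repair is short. For a bivalent $u\in H_g$ with $u\neq v$, the parent $p$ of $u$ also lies in $H_g$ and is a branching vertex by nondegeneracy, hence $p\in T^s$ by your geodesic argument; likewise the child $u_1\in T^s$. Convexity of the subtree $T^s$ then gives $u\in T^s$. If $v$ itself is bivalent, first run the edge-permutation argument one level down, at $v_1$: every forward neighbour of $v_1$ lies in $T^s$ by the previous sentence, so all forward edges at $v_1$ are fixed, whence the single backward edge $v_1v$ is fixed and $v\in T^s$. Now your edge-permutation at $v$ yields $sgH=gH$ as desired.
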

For a complete proof that holds in a more general case, see \cite[Lemma~3.6]{BIO}.
\begin{proof}[Sketch of proof]
The idea is to assume that $sgH\neq gH$, and then construct an infinite word starting with $g$, that gives rise to a ray that is not fixed (up to cofinality) by $s$.
The argument involves division into several subcases, where the infinite word depends upon the last letter of $g$, and the first and last letter of $g^{-1}sg$.
\end{proof}

Recall \eqref{eq:K0K1} and define the set
\begin{equation}\label{eq:fixators}
K((g_0)g_1\dotsm g_n)=(g_0)g_1\dotsm g_n K_{n+1\pmod 2} g_n^{-1}\dotsm g_1^{-1}(g_0^{-1}),
\end{equation}
where $g_i\in S_{i\pmod 2}$.
This is the subgroup of $G$ consisting of all elements that fix the basic open set $U((g_0)g_1\dotsm g_n)$ pointwise.

\begin{lemma}\label{lem:interior}
We have that $\ker G = \ker(G\curvearrowright T) = \ker(G\curvearrowright \partial T) = \ker(G\curvearrowright\overline{\partial T})$,
and that $\operatorname{int}(G\curvearrowright \partial T)$ equals $\operatorname{int}(G\curvearrowright\overline{\partial T})$ and coincides with the normal closure of $K_0 \cup K_1$.
\end{lemma}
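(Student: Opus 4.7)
The plan is to treat the kernel identifications and the interior identifications as two largely separate verifications, in both cases translating the algebraic definitions of $\ker G$ and $K_0, K_1$ into the geometric language of pointwise stabilizers of edges, vertices, and half-trees of the Bass--Serre tree. First I would check $\ker(G\curvearrowright T)=\ker G$ using that the stabilizer of the edge $gH$ is $gHg^{-1}$, so the kernel of the action on edges equals $\bigcap_{g\in G}gHg^{-1}=\ker G$, while elements of $\ker G\subseteq H\subseteq G_0\cap G_1$ automatically fix all vertices as well. The inclusions $\ker G\subseteq\ker(G\curvearrowright\overline{\partial T})\subseteq\ker(G\curvearrowright\partial T)$ are then immediate, the first because $\ker G$ fixes all of $V\cup\partial T$ and the second because $\partial T\subseteq\overline{\partial T}$. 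For the remaining direction $\ker(G\curvearrowright\partial T)\subseteq\ker G$, I would invoke the Lemma stated just above: if $s$ fixes $\partial T$ pointwise then $s$ fixes every basic open set $U((g_0)g_1\dotsm g_n)$ pointwise, and the Lemma yields $s\cdot(g_0)g_1\dotsm g_n H=(g_0)g_1\dotsm g_n H$. Writing any $g\in G$ in normal form $(g_0)g_1\dotsm g_n h$ and using $hHh^{-1}=H$, this forces $s\in gHg^{-1}$ for every $g\in G$, so $s\in\ker G$.

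For the interior on $\partial T$ I would apply the identification \eqref{eq:fixators}: the pointwise stabilizer of the basic open set $U((g_0)g_1\dotsm g_n)$ is $K((g_0)g_1\dotsm g_n)$, a conjugate of $K_0$ or $K_1$. Since the $U$-sets form a base, the set of $s\in G$ with $\partial T^s$ having nonempty interior equals $\bigcup K((g_0)g_1\dotsm g_n)$, and the subgroup $\operatorname{int}(G\curvearrowright\partial T)$ that it generates lies in the normal closure $N$ of $K_0\cup K_1$. For the reverse inclusion I would use that $\operatorname{int}(G\curvearrowright\partial T)$ is automatically normal together with \eqref{eq:K0-K1-relation}, which gives $K_0\subseteq g_0K_1g_0^{-1}=K(g_0)$ for any $g_0\in S_0$ and symmetrically $K_1\subseteq K(g_1)$; hence $K_0\cup K_1\subseteq\operatorname{int}(G\curvearrowright\partial T)$, and normality yields $N\subseteq\operatorname{int}(G\curvearrowright\partial T)$.

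Finally I would compare the two interiors via density of $\partial T$ in $\overline{\partial T}$. If $\overline{\partial T}^s$ contains a nonempty open set $U$ of $\overline{\partial T}$, then $U\cap\partial T$ is a nonempty relatively open subset of $\partial T^s$, so $s$ lies in $\operatorname{int}(G\curvearrowright\partial T)$. Conversely, an element $s\in K((g_0)g_1\dotsm g_n)$ fixes not only the boundary set $U((g_0)g_1\dotsm g_n)$ but also every vertex of the corresponding extended half-tree, so $s$ fixes a nonempty open subset $Z(e)\cap\overline{\partial T}$ of $\overline{\partial T}$, making $s$ a generator of $\operatorname{int}(G\curvearrowright\overline{\partial T})$. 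The main obstacle I foresee is cleanly deploying \eqref{eq:fixators}---in particular, checking that the index $n+1\pmod 2$ in the definition of $K((g_0)g_1\dotsm g_n)$ matches the geometric picture in which $K_i$ is the pointwise stabilizer of the $G_i$-side half-tree, and ensuring that the ``only if'' direction of this identification follows cleanly from the Lemma stated just above (whose proof is only sketched here).
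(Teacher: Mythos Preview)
Your proposal is correct and follows essentially the same route as the paper's proof: both parts rest on the new Lemma and the identification \eqref{eq:fixators}, and the kernel and interior arguments match closely. The one small difference is in showing $\operatorname{int}(G\curvearrowright\partial T)\subseteq\operatorname{int}(G\curvearrowright\overline{\partial T})$: the paper argues abstractly via density and continuity (if $U\subseteq(\partial T)^s$ is open, take $V$ open in $\overline{\partial T}$ with $U=V\cap\partial T$ and use that $V\subseteq\overline{U}\subseteq(\overline{\partial T})^s$), whereas you argue concretely that an element of $K((g_0)g_1\dotsm g_n)$ fixes the full extended half-tree and hence an open set of $\overline{\partial T}$; both are valid and equally short.
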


\begin{proof}
It should be clear that $\ker G=\ker(G\curvearrowright T)\subseteq\ker(G\curvearrowright \partial T)\subseteq H$.
Indeed, the latter inclusion follows from the new lemma above.
Suppose that $h\in H\setminus \ker G$.
Then there exists $g$ such that $ghg^{-1}\notin H$.
Moreover, we can find a $g$ of the form $(g_0)g_1\dotsm g_n$ with $g_i\in S_{i\pmod 2}$ with this property.
Let $x$ be any ray starting with the corresponding vertices.
Then $hx\not\sim x$, so $h\notin\ker(G\curvearrowright \partial T)\subseteq H$
The equality $\ker(G\curvearrowright\partial T)=\ker(G\curvearrowright\overline{\partial T})$ follows from continuity of the action.

To see that $\operatorname{int}(G\curvearrowright\partial T)=\operatorname{int}(G\curvearrowright\overline{\partial T})$, note first that if 
$(\overline{\partial T})^g$ has nonempty interior, i.e., there exists open nonempty $V\subseteq(\overline{\partial T})^g$,
then $V\cap\partial T\subseteq(\partial T)^g$ is open nonempty in $\partial T$.
Next, if $(\partial T)^g$ has nonempty interior, i.e., there exists open nonempty $U\subseteq(\partial T)^g$, then there exists open $V\subseteq\overline{\partial T}$ such that $U=V\cap\partial T$.
Using density and continuity of the action, it follows that $V\subseteq\overline{U}\subseteq(\overline{\partial T})^g$.

Next, $K_0$ fixes $U(g_0)$, i.e., all sequences of vertices starting with $g_0G_1$ for any $g_0\in S_0$, pointwise,
and $K_1$ fixes $U(g_1)$, i.e., all sequences of vertices starting with $G_1,g_1G_0$ for any $g_1\in S_1$, pointwise.
Hence, $K_0 \cup K_1 \subseteq \operatorname{int}(G\curvearrowright \partial T)$.
Therefore, as the latter is normal, the same inclusion holds for the normal closure of $K_0 \cup K_1$.

Pick $g\in G$ and suppose that $(\partial T)^h$ has nonempty interior.
Then $h$ must fix some basic open set pointwise, say $U(g)$ for $g=(g_0)g_1\dotsm g_n$.
This means that $h\in K(g)$, as defined in \eqref{eq:fixators}, so $h\in gK_{n+1\pmod 2}g^{-1}$,
that is, $h$ belongs to the normal closure of $K_{n+1\pmod 2}$.
Since $\operatorname{int}(G\curvearrowright \partial T)$ is generated by $\{h\in G:(\partial T)^h\text{ has nonempty interior}\}$,
the conclusion follows.

Note that $h$ also fixes $U((g_0)g_1\dotsm g_{n+1})$, so $h$ belongs to the normal closure of the other $K_i$ as well.
In fact, the normal closures of $K_0$, $K_1$, and $K_0 \cup K_1$ are all the same.
\end{proof}

\begin{definition}\label{def:int-G}
The subgroup $\operatorname{int}(G\curvearrowright\partial T)=\langle\{g\in G:(\partial T)^g\text{ has nonempty interior}\}\rangle$
of $G$, or equivalently, the normal closure of $K_0\cup K_1$ in $G$,
will be called the \emph{interior} of $G$ and denoted $\operatorname{int} G$.
\end{definition}

In Proposition~\ref{prop:cstarsimple-int} below, we show that $G$ is $C^*$-simple if and only if $\operatorname{int} G$ is $C^*$-simple,
giving an analog of Proposition~\ref{prop:amalgam ut} (however $\operatorname{int} G$ can be all of $G$).

\begin{proposition}\label{prop:slender}
The following are equivalent:
\begin{itemize}
\item[(i)] $\operatorname{int} G=\{e\}$,
\item[(ii)] $G\curvearrowright T$ is strongly faithful,
\item[(iii)] $G\curvearrowright \partial T$ is strongly faithful,
\item[(iv)] $G\curvearrowright \partial T$ is topologically free, i.e., $G\curvearrowright T$ is slender in the sense of \cite{HP}.
\end{itemize}
\end{proposition}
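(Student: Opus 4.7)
The plan is to prove (i)$\Leftrightarrow$(iv) directly from the definitions and then close the cycle (iv)$\Rightarrow$(iii)$\Rightarrow$(ii)$\Rightarrow$(i). The equivalence of (i) and (iv) is immediate: by Definition~\ref{def:int-G}, $\operatorname{int} G$ is the subgroup generated by $\{g\in G:(\partial T)^g\text{ has nonempty interior}\}$, so its triviality is literally the statement that no nontrivial $g$ has that property, which is topological freeness.

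For (iv)$\Rightarrow$(iii) the idea is Baire category. The boundary $\partial T$ is locally compact Hausdorff (each basic clopen $U(g)$ is compact in the shadow topology), hence a Baire space, and each fixed-point set $(\partial T)^g$ is closed by continuity of the action on a Hausdorff space. Under (iv) these closed sets are nowhere dense for $g\neq e$, so for any finite $F\subseteq G\setminus\{e\}$ the complement of $\bigcup_{f\in F}(\partial T)^f$ is dense open in $\partial T$, hence nonempty, and any of its points witnesses strong faithfulness. For (iii)$\Rightarrow$(ii) I would use tree geometry: given a finite $F$ and a boundary point $x=[(x_n)]$ with $fx\neq x$ for all $f\in F$, each such $f$ can fix only finitely many of the $x_n$, because any two fixed vertices in a tree force the whole geodesic between them to be pointwise fixed, so infinitely many fixed $x_n$ would give $fx_n=x_n$ for all $n$ past the smallest fixed index and hence $fx\sim x$, contradicting $fx\neq x$. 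Picking $n$ larger than these finitely many thresholds produces a single vertex $x_n$ unfixed by every element of $F$.

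The conceptual heart, and what I expect to be the main obstacle, is (ii)$\Rightarrow$(i), which I would handle by contrapositive. Suppose $\operatorname{int} G\neq\{e\}$. By Lemma~\ref{lem:interior} this is equivalent to $K_0\cup K_1\neq\{e\}$, and the observation just after \eqref{eq:K0-K1-relation} that $K_0=\{e\}$ iff $K_1=\{e\}$ then forces both $K_0$ and $K_1$ to be nontrivial; fix $h_j\in K_j\setminus\{e\}$ for $j=0,1$. Let $T_0$ and $T_1$ denote the two half-trees obtained by removing the edge $eH$ from $T$, with $G_j\in T_j$; these partition $V(T)$ disjointly. The key claim is that $h_0$ fixes every vertex of $T_0$, and symmetrically $h_1$ every vertex of $T_1$. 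A vertex of $T_0$ has the form $sG_i$ where either $s=e$ and $i=0$ (handled by $h_0\in H\subseteq G_0$), or $s$ is in normal form starting with a letter of $G_0\setminus H$; in the latter case the defining intersection~\eqref{eq:K0K1} yields $K_0\subseteq sHs^{-1}$, so $s^{-1}h_0 s\in H\subseteq G_i$ and therefore $h_0\cdot sG_i=sG_i$. Consequently $F=\{h_0,h_1\}$ leaves no vertex of $T$ unfixed, so $G\curvearrowright T$ fails to be strongly faithful. The delicate point here is pairing $K_0$ with the half-tree $T_0$ (and $K_1$ with $T_1$) via the normal form; this pairing relies on the precise form of~\eqref{eq:K0K1} where conjugation runs over words of arbitrary length starting on one fixed side of the amalgam, not on the coarser relation~\eqref{eq:K0-K1-relation}.
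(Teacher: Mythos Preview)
Your proof is correct and takes a genuinely different route from the paper's. The paper does not argue the cycle directly: it observes that (i), (iv), and condition~(i) of Theorem~\ref{thm:trivial-Ks} coincide via Lemma~\ref{lem:interior}, then identifies your (ii) with condition~(ii) of Theorem~\ref{thm:trivial-Ks} (the ``(SF)'' condition of \cite{Harpe}) so that the already-proved Theorem~\ref{thm:trivial-Ks} handles (i)$\Leftrightarrow$(ii)$\Leftrightarrow$(iv), and finally outsources (ii)$\Leftrightarrow$(iii) to \cite[Lemma~9]{Harpe}. Your approach is more self-contained: the Baire argument for (iv)$\Rightarrow$(iii) and the geodesic-convexity argument for (iii)$\Rightarrow$(ii) replace the citation of \cite{Harpe}, and your (ii)$\Rightarrow$(i) via the two half-tree fixators $\{h_0,h_1\}$ is a clean geometric recasting of the (iii)$\Rightarrow$(i) step inside Theorem~\ref{thm:trivial-Ks}. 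What the paper's route buys is brevity and a direct link back to Theorem~\ref{thm:trivial-Ks}; what yours buys is that the reader sees the tree mechanism explicitly without chasing references. One small caution: your parenthetical ``each basic clopen $U(g)$ is compact'' need not hold when $T$ is not locally finite (cf.\ the paper's new proposition on $\overline{\partial T}$), but this is harmless since the paper already records that $\partial T$ is locally compact Hausdorff, which is all you need for Baire.
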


\begin{proof}
The equivalences between (i), (iv), and condition~(i) of Theorem~\ref{thm:trivial-Ks} follow directly from Lemma~\ref{lem:interior} and Definition~\ref{def:int-G}.
Moreover, condition~(ii) of Theorem~\ref{thm:trivial-Ks} coincides with condition~(SF) from \cite[p.~245]{Harpe}.
Remark that in \cite{Harpe}, the notations $\textup{Edg}_+X$ and $Y$ are used for $T$ and $\partial T$, respectively.
In particular, (ii) is the same as condition~(SF), and thus the above means that (ii) is equivalent with (i) and (iv).
Finally, it follows from \cite[Lemma~9]{Harpe} that (ii) implies (iii),
and it is stated immediately after the proof of \cite[Lemma~9]{Harpe} that its converse holds as well.
\end{proof}

Using the terminology of \cite{Boudec}, we see from the proof of Lemma~\ref{lem:interior}
that $K_0$ and $K_1$ are precisely the fixators of the half-trees of $T$ obtained by removing the edge $H$.

\begin{proposition}\label{prop:cstarsimple-int}
Let $G$ be a nondegenerate free product with amalgamation.
Then $G$ is $C^*$-simple if and only if $\operatorname{int} G$ is $C^*$-simple.
\end{proposition}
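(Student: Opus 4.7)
The biconditional is trivial when $\operatorname{int} G = \{e\}$: in that case $K_0 = K_1 = \{e\}$ (they are subgroups of $\operatorname{int} G$), so Theorem~\ref{thm:trivial-Ks} yields that $G$ is a Powers group and hence $C^*$-simple, while $C^*_r(\{e\}) = \C$ is simple. So I may assume from now on that $\operatorname{int} G$ is a nontrivial normal subgroup of $G$.

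For the forward direction, I would invoke \cite[Theorem~1.4]{BKKO}: every nontrivial normal subgroup of a $C^*$-simple group is itself $C^*$-simple. Applied to the normal subgroup $\operatorname{int} G$ (Definition~\ref{def:int-G}), this gives the $C^*$-simplicity of $\operatorname{int} G$ as soon as $G$ is $C^*$-simple.

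For the reverse direction, suppose $\operatorname{int} G$ is $C^*$-simple. First observe that the chain $AR(G) \subseteq \ker G \subseteq \operatorname{int} G$ exhibits $AR(G)$ as a normal amenable subgroup of $\operatorname{int} G$; hence $AR(G) \subseteq AR(\operatorname{int} G) = \{e\}$, and $G$ already has the unique trace property by \cite[Theorem~1.3]{BKKO}. To upgrade this to $C^*$-simplicity, I would argue by contradiction via Kennedy's URS criterion \cite[Theorem~1.1]{Kennedy}: if $G$ failed to be $C^*$-simple, it would admit a nontrivial amenable uniformly recurrent subgroup $\mathcal{H}$. Since $\operatorname{int} G$ is normal in $G$, one can restrict $\mathcal{H}$ to $\operatorname{int} G$ (taking intersections $H \cap \operatorname{int} G$ for $H \in \mathcal{H}$), producing an amenable URS of $\operatorname{int} G$, which by $C^*$-simplicity of $\operatorname{int} G$ must be trivial; the goal is to promote this to a contradiction with the nontriviality of $\mathcal{H}$.

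The main obstacle is the last step: ruling out that $\mathcal{H}$ has ``trivial restriction'' to $\operatorname{int} G$ for the wrong reason, namely that its members avoid $\operatorname{int} G$ altogether. Here I would exploit the geometric description from Lemma~\ref{lem:interior}, which identifies $\operatorname{int} G$ as the group generated by the fixators of the extended half-trees of the Bass-Serre tree $T$. Combined with the tree-action machinery of Section~\ref{sec:actions} and the URS techniques for groups acting on trees developed in \cite{Boudec,Boudec-Bon,MS}, this should force any nontrivial amenable URS of $G$ to have nontrivial interaction with $\operatorname{int} G$, yielding the desired contradiction.
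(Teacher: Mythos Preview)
Your forward direction and the trivial case $\operatorname{int} G=\{e\}$ are fine and match the paper. The gap is in the reverse direction when $\operatorname{int} G$ is nontrivial.

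First, a technical problem: intersecting the members of a uniformly recurrent subgroup $\mathcal{H}$ of $G$ with a normal subgroup $N$ does yield a closed $N$-invariant subset of $\operatorname{Sub}(N)$, but there is no reason it should be \emph{minimal} under the $N$-action, so you have not actually produced a URS of $\operatorname{int} G$. Second, and more seriously, your ``main obstacle'' paragraph is not an argument. Even if every $H\in\mathcal{H}$ satisfied $H\cap\operatorname{int} G=\{e\}$, you would need to show this is impossible, and invoking ``tree-action machinery'' and references without saying which statement applies and how is not a proof. Nothing in the cited sources gives this conclusion off the shelf.

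The paper avoids URS entirely and instead uses the full strength of \cite[Theorem~1.4]{BKKO}, namely that $G$ is $C^*$-simple if and only if both $N$ and the centralizer $C_G(N)$ are $C^*$-simple, applied with $N=\operatorname{int} G$. The whole point then becomes to control $C_G(\operatorname{int} G)$ combinatorially. If $C_G(\operatorname{int} G)\subseteq H$, then (being normal in $G$) it lies in $\ker G\subseteq\operatorname{int} G$, hence equals $Z(\operatorname{int} G)=\{e\}$, and one is done. If instead some $g\in C_G(\operatorname{int} G)$ lies outside $H$, a short word-length argument using $gK_ig^{-1}=K_i$ and the inclusions $g_iK_{i+1\bmod 2}g_i^{-1}\supseteq K_i$ forces $K_0=K_1=\ker G=\operatorname{int} G\subseteq H$; then $G/\operatorname{int} G$ has trivial one-sided kernels, is $C^*$-simple by Theorem~\ref{thm:trivial-Ks}, and the extension half of \cite[Theorem~1.4]{BKKO} finishes. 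This centralizer analysis is the missing idea in your attempt.
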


\begin{proof}
Suppose that $\operatorname{int} G$ is $C^*$-simple.

First, assume that $\operatorname{int} G\subseteq H$.
For the moment, write $K_i(G)$ and $K_i(G/\ker G)$ for the $K_i$'s corresponding to the amalgams $G$ and $G/\ker G$, respectively.
Clearly, a word in $G$ starts with a letter in $G_i\setminus H$ if and only if its image in $G/\ker G$ starts with a letter in $(G_i/\ker G)\setminus (H/\ker G)$.
Thus, we see that $K_i(G)/\ker G\cong K_i(G/\ker G)$.

Because $\operatorname{int} G$ is a normal subgroup of $G$ contained in $H$, we must have $\operatorname{int} G=\ker G$, and then also $K_0=K_1=\ker G$.
Thus, $G/\operatorname{int} G$ is $C^*$-simple by Theorem~\ref{thm:trivial-Ks}.
Since we have assumed that $\operatorname{int} G$ is $C^*$-simple, it follows from \cite[Theorem~1.4]{BKKO} that $G$ is $C^*$-simple.

Next, let $C_G(\operatorname{int} G)$ denote the centralizer of $\operatorname{int} G$ in $G$, and suppose that $g\in C_G(\operatorname{int} G)\setminus H$.

In particular, this means that $g$ commutes with all elements in $K_0$ and $K_1$, so $gK_ig^{-1}=K_i$ for $i=0,1$. Moreover, for all $g_i\in G_i\setminus H$,
we always have that $g_iK_{i+1\pmod 2}g_i^{-1}\supseteq K_i$.
From this it follows that $K_0=K_1$, which means that $\ker G=\operatorname{int} G$.

Indeed, let $g$ have length $n$ and denote by $g_n$ the last letter of $g$.
If $n$ is odd, then
\[
K_{n \pmod 2}=gK_{n \pmod 2}g^{-1}\subseteq\dotsb\subseteq g_nK_{n \pmod 2}g_n^{-1}\subseteq K_{n+1 \pmod 2},
\]
which easily implies $K_0 = K_1$, and hence $\ker G=\operatorname{int} G$.
If $n$ is even, then
\[
K_{n+1 \pmod 2}=gK_{n+1 \pmod 2}g^{-1}\subseteq\dotsb\subseteq g_nK_{n \pmod 2}g_n^{-1}\subseteq K_{n+1 \pmod 2},
\]
and therefore all containments are equalities.
Assume that $[G_j:H]\geq 3$, for $j=0$ or $1$, pick a letter $g_j\in G_j\setminus H$ of $g$,
and choose another element $g_j'\in G_j\setminus H$ such that $g_j^{-1}g_j'\in G_j\setminus H$.
Then
\[
(g_j')^{-1}g_jK_{j\pmod 2}g_j^{-1}g_j' = (g_j')^{-1}K_{j+1\pmod 2}g_j' \supseteq K_{j\pmod 2}.
\]
Moreover,
\[
(g_j')^{-1}g_jK_{j\pmod 2}g_j^{-1}g_j' \subseteq K_{j+1\pmod 2},\quad\text{so}\quad K_{j\pmod 2} \subseteq K_{j+1\pmod 2},
\]
meaning that $\ker G=\operatorname{int} G$.

Finally, if $C_G(\operatorname{int} G)\subseteq H$, then $C_G(\operatorname{int} G)\subseteq\ker G$, since it is normal in $G$ and contained in $H$.
Then $C_G(\operatorname{int} G)=Z(\operatorname{int} G)$, since it is contained in $\operatorname{int} G$, so it must be trivial since $\operatorname{int} G$ is assumed to be $C^*$-simple.
Hence, it follows from \cite[Theorem~1.4]{BKKO} that $G$ is $C^*$-simple.

The converse holds by \cite[Theorem~1.4]{BKKO} because $\operatorname{int} G$ is a normal subgroup of $G$.
\end{proof}

\begin{proposition}
Suppose that $\ker G$ is trivial.
Then $G$ is a weak$^*$~Powers group.
\end{proposition}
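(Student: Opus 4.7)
The plan is to imitate the derivation of the Powers property from condition~(iii) in the proof of Theorem~\ref{thm:trivial-Ks}, restricted throughout to one-element sets $F = \{f\}$. The weak$^*$~Powers property differs from the Powers property exactly in that only singletons are required to admit a Powers partition, and the sole step of Theorem~\ref{thm:trivial-Ks}'s proof where the full hypothesis $K_0 = K_1 = \{e\}$ is used rather than just $\ker G = \{e\}$ is the cascade of conjugations $r_1, r_2, \dotsc$ needed to push several elements out of $H$ simultaneously. Hence the weaker hypothesis $\ker G = \{e\}$ should suffice for the singleton version.

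First I would establish the single-element analog of Theorem~\ref{thm:trivial-Ks}(iii): for every $f \in G \setminus \{e\}$ there exists $g \in G$ with $gfg^{-1} \notin H$. If $f \notin H$, take $g = e$. If $f \in H \setminus \{e\}$, then $f \notin \ker G = \bigcap_{g' \in G} g'Hg'^{-1}$, so some $g'$ satisfies $f \notin g'Hg'^{-1}$, and $g = (g')^{-1}$ works. No iteration is required since only one element is being treated. Combined with the conjugation-invariance of the weak$^*$~Powers condition---if $(D, E, g_1, \dotsc, g_k)$ witnesses it for $f$, then $(sD, sE, sg_1s^{-1}, \dotsc, sg_ks^{-1})$ witnesses it for $sfs^{-1}$---this allows us to assume $f \notin H$ at the outset.

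Second, I would appeal to the singleton case of \cite[Proposition~10]{Harpe}, the tool invoked in Theorem~\ref{thm:trivial-Ks}'s proof to produce the Powers partition from the conjugation condition. Its construction, which uses the Bass-Serre-tree action of the nondegenerate amalgam $G$ together with the abundance of hyperbolic elements, depends on $F$ only through the property that each of its elements admits a conjugate outside $H$; hence it adapts without modification to $F = \{f\}$ and produces the required partition $G = D \sqcup E$ along with elements $g_1, \dotsc, g_k$. The main obstacle is to verify that Harpe's construction genuinely decouples over the elements of $F$, so that the singleton case is a clean specialization; this is plausible because the individual pieces of the partition are built from tree-dynamical data attached to each element separately, but care is required in the delicate subcase where the given $f$ (after the initial conjugation) lies in $G_0 \setminus H$ or $G_1 \setminus H$ and is therefore elliptic rather than hyperbolic on the Bass-Serre tree, so that one cannot directly exploit an axis of $f$ itself.
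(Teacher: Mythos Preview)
Your approach is correct and runs parallel to the paper's. Both arguments observe that $\ker G=\{e\}$ gives the singleton version of condition~(SF) --- every nontrivial element has a conjugate outside $H$ --- and then invoke the relevant result from \cite{Harpe}, checking that its proof survives the restriction to one-element sets $F$. The only real difference is packaging: the paper phrases the intermediate step dynamically, using Lemma~\ref{lem:interior} to say that the action on $\partial T$ is faithful (rather than strongly faithful) and then adapting \cite[Lemma~4]{Harpe}, whereas you stay on the combinatorial side and head straight for \cite[Proposition~10]{Harpe}. Since Proposition~10 is itself proved via Lemma~4, the underlying mechanism is identical; the paper's route has the mild advantage that the elliptic-versus-hyperbolic worry you raise does not arise, because Lemma~4 works with the boundary action and uses hyperbolic elements of $G$ supplied by strong hyperbolicity rather than any dynamical property of $f$ itself.
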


\begin{proof}
The action of $G$ on the boundary of its Bass-Serre tree is always minimal and strongly hyperbolic (see \cite[Proposition~19]{HP}).
If $\ker G$ is trivial, then the action is also faithful by Lemma~\ref{lem:interior}.
In \cite[Lemma~4]{Harpe} we now replace ``strongly faithful'' by ``faithful'',
and the first part of the proof still works, under the assumption that $F\neq\{e\}$ is a one-element set.
The rest of the argument goes along the same lines.
\end{proof}

\begin{example}
The group $\Gamma$ from Section~\ref{sec:example} is a weak$^*$~Powers group that is not $C^*$-simple.
\end{example}

We complete this section by some facts about boundary actions and refer to \cite{BKKO,KK,Ozawa} for further details.
An action of $G$ on a compact space $X$ is called a \emph{boundary action} if it is
minimal (i.e., the orbits are dense) and
strongly proximal (i.e., the orbit-closure in $P(X)$ of every probability measure on $X$ contains a point mass).
In this case we also say that the space $X$ is a $G$-boundary.

For every group $G$ there is a universal $G$-boundary called the \emph{Furstenberg boundary} and denoted $\partial_F G$.
If $X$ is any other $G$-boundary, then there exists a continuous surjective $G$-equivariant map $\partial_F G\to X$.
Moreover, we remark that for every $g\in G$, the set $(\partial_F G)^g$ is always clopen, cf.\ \cite[Lemma~3.3]{BKKO}.

In Section~\ref{sec:aish} we will often make use of the following observation from \cite{BKKO}.

\begin{lemma}\label{lem:boundary-subquotient}
Let $G$ be a group, $N$ a normal subgroup of $G$, and $L$ a normal subgroup of $N$.
Suppose that $g\in N$ is such that $(\partial_F G)^g\neq\varnothing$.
Then $(\partial_F(N/L))^{gL}\neq\varnothing$.
\end{lemma}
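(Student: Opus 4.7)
The plan is to reduce the claim to two implications that factor through the Furstenberg boundary of $N$, namely
\[
(\partial_F G)^g \neq \varnothing \;\Longrightarrow\; (\partial_F N)^g \neq \varnothing \;\Longrightarrow\; (\partial_F(N/L))^{gL} \neq \varnothing.
\]
The second implication is the routine direction. Since $L$ is normal in $N$, the quotient homomorphism $N \to N/L$ allows one to regard $\partial_F(N/L)$ as an $N$-space, and the resulting $N$-action remains minimal and strongly proximal; hence $\partial_F(N/L)$ is an $N$-boundary. By the universal property of $\partial_F N$, there is a continuous $N$-equivariant surjection $\pi \colon \partial_F N \to \partial_F(N/L)$, and any $y \in (\partial_F N)^g$ projects to the desired $gL$-fixed point $\pi(y)$.

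For the first implication, I would first argue that the restriction of the $G$-action on $\partial_F G$ to the normal subgroup $N$ is still minimal and strongly proximal, so that $\partial_F G$ is itself an $N$-boundary; this step makes essential use of the normality of $N$ in $G$. Universality of $\partial_F N$ then provides a continuous $N$-equivariant surjection $\phi \colon \partial_F N \to \partial_F G$, and the task reduces to lifting a given fixed point $x \in (\partial_F G)^g$ to a fixed point $y \in (\partial_F N)^g$. Note that by normality of $N$ together with $g \in N$, the fiber $\phi^{-1}(x)$ is a non-empty compact $g$-invariant subset of $\partial_F N$; moreover, by \cite[Lemma~3.3]{BKKO} both $(\partial_F G)^g$ and (if non-empty) $(\partial_F N)^g$ are clopen, so $\phi^{-1}((\partial_F G)^g)$ is a clopen $g$-invariant subset of $\partial_F N$ in which we search for the lift.

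The main obstacle is precisely this lifting step: a continuous equivariant surjection does not in general lift fixed points, since a $\langle g\rangle$-action on a compact invariant set need not have fixed points for abstract reasons. To push through one has to exploit the particular structure of the universal $N$-boundary. The natural strategy is to use amenability of $\langle g\rangle$ to produce, via a Markov--Kakutani argument, a $g$-invariant probability measure on the compact fiber $\phi^{-1}(x)$, and then invoke strong proximality of $N \curvearrowright \partial_F N$ together with the essential/injective nature of $C(\partial_F N)$ as the $N$-injective envelope of $\C$ to force this measure to be a point mass at a $g$-fixed point, giving the sought lift in $(\partial_F N)^g$.
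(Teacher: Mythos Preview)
Your second implication is exactly the paper's $\psi\colon\partial_F N\to\partial_F(N/L)$, so that part is fine. The problem is the first implication: your map $\phi$ points the wrong way, and the claim underpinning it is false. In general the restricted $N$-action on $\partial_F G$ is \emph{not} minimal, so $\partial_F G$ is not an $N$-boundary and there is no $N$-equivariant surjection $\partial_F N\to\partial_F G$. For a concrete obstruction, take any non-amenable $G$ with a nontrivial normal amenable subgroup $N$ (or even $N=\{e\}$): then $N$ acts trivially on $\partial_F G$, so orbits are singletons in a space with more than one point, and no surjection from the one-point space $\partial_F N$ to $\partial_F G$ can exist. Normality of $N$ does buy you that $G$-translates of $N$-minimal subsets are again $N$-minimal, but it does not force uniqueness of such subsets.

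The paper avoids the whole lifting issue by reversing the arrow. By \cite[Lemma~5.2]{BKKO}, the $N$-action on $\partial_F N$ extends to a $G$-action making $\partial_F N$ a $G$-boundary; universality of $\partial_F G$ then yields a $G$-equivariant surjection $\phi\colon\partial_F G\to\partial_F N$. With the map going this direction, a fixed point $x\in(\partial_F G)^g$ simply pushes forward to $\phi(x)\in(\partial_F N)^g$, and then on to $\psi(\phi(x))\in(\partial_F(N/L))^{gL}$. No Markov--Kakutani step, no injectivity argument, no lifting---the composite $\partial_F G\to\partial_F N\to\partial_F(N/L)$ is $N$-equivariant and that is all one needs.
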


\begin{proof}
First, the quotient map $N\to N/L$ gives rise to an action of $N$ on $\partial_F(N/L)$,
which makes $\partial_F(N/L)$ an $N$-boundary.
Thus, there exists an $N$-equivariant continuous surjective map $\psi\colon\partial_F N\to\partial_F(N/L)$.
Next, \cite[Lemma~5.2]{BKKO} says that the $N$-action on $\partial_F N$ extends to an action of $G$,
such that $\partial_F N$ becomes a $G$-boundary.
Therefore, there is a $G$-equivariant continuous surjective map $\phi\colon\partial_F G\to\partial_F N$,
so altogether we have surjections
\[
\partial_F G\overset{\phi}{\longrightarrow}\partial_F N\overset{\psi}{\longrightarrow}\partial_F(N/L).
\]
Now, if $g\in N$ and $(\partial_F G)^g\neq\varnothing$, there exists $x\in \partial_F G$ such that $gx=x$.
It follows that $g\psi(\phi(x))=\psi(\phi(gx))=\psi(\phi(x))$, so $\psi(\phi(x))\in(\partial_F(N/L))^g=(\partial_F(N/L))^{gL}$,
which is therefore nonempty.
In particular, we have $\psi(\phi((\partial_F G)^g))\subseteq (\partial_F(N/L))^{gL}$.
\end{proof}

\begin{lemma}\label{lem:T-boundary}
Suppose that $G$ is a nondegenerate amalgam.
Then $\overline{\partial T}$ is a $G$-boundary.
\end{lemma}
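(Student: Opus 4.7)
The plan is to verify the one remaining property needed to conclude that $\overline{\partial T}$ is a $G$-boundary. By the new Proposition recalled above, $\overline{\partial T}$ is already known to be compact, minimal, and $G$-invariant, so it suffices to show that the $G$-action is strongly proximal, i.e., that for every Borel probability measure $\mu$ on $\overline{\partial T}$, the weak-$*$ closure $\overline{G\cdot\mu}$ in $P(\overline{\partial T})$ contains some Dirac mass.

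First I would produce the dynamical engine. Since $G$ is nondegenerate, I can pick $g_0\in G_0\setminus H$ and $g_1\in G_1\setminus H$ and set $a=g_0g_1\in G$. This element is a hyperbolic isometry of $T$ with translation length $2$, whose axis is the bi-infinite geodesic through the vertices $\dotsc,a^{-1}G_0,a^{-1}g_0G_1,G_0,g_0G_1,g_0g_1G_0,\dotsc$, and whose two fixed points in $\partial T$ are the infinite words
\[
\xi^+=g_0g_1g_0g_1\dotsm
\quad\text{and}\quad
\xi^-=g_1^{-1}g_0^{-1}g_1^{-1}g_0^{-1}\dotsm.
\]
A direct check using the basis of extended half-trees that generates the shadow topology (as recalled earlier from \cite{Boudec-Bon,MS}) shows that $a$ acts with north--south dynamics on $\overline{\partial T}$: for every $x\in\overline{\partial T}\setminus\{\xi^-\}$, we have $a^nx\to\xi^+$. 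Indeed, for any fixed extended half-tree $U$ containing $\xi^+$, one checks that all but finitely many iterates $a^nx$ lie in $U$ as soon as $x$ avoids the complementary extended half-tree collapsing onto $\xi^-$.

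Next I would use this dynamics to prove strong proximality. If $\mu\in P(\overline{\partial T})$ satisfies $\mu(\{\xi^-\})=0$, then dominated convergence gives $a^n\mu\to\delta_{\xi^+}$ weakly, so $\delta_{\xi^+}\in\overline{G\cdot\mu}$. For a general $\mu$, the set $A_\mu$ of its atoms is at most countable, while nondegeneracy forces $\partial T$ to be uncountable: varying the choice of $g_0\in G_0\setminus H$ and $g_1\in G_1\setminus H$ already yields uncountably many distinct repelling points $\xi^-_{g_0g_1}$, and more generally the set of repelling fixed points of hyperbolic elements is dense in $\partial T$. Hence I can choose $a$ so that $\xi^-\notin A_\mu$, and the previous step produces a Dirac mass in $\overline{G\cdot\mu}$.

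The main technical nuisance is checking that the north--south convergence really holds on all of $\overline{\partial T}=(V\setminus F)\cup\partial T$ and not merely on $\partial T$: one must make sure vertices in $V\setminus F$ that lie off the axis of $a$ get swept toward $\xi^+$ in the shadow topology. This is handled by the fact that each basic neighbourhood of $\xi^+$ is an extended half-tree on the positive side of some edge of the axis, and once $n$ is large enough, $a^n$ maps any fixed $x\neq\xi^-$ inside this half-tree. With strong proximality established, combined with the already known compactness, minimality, and $G$-invariance, it follows that $\overline{\partial T}$ is a $G$-boundary.
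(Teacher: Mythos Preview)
Your overall strategy (verify strong proximality via north--south dynamics of hyperbolic elements, the other three conditions being supplied by the new Proposition) is sound, but the step where you dispose of atoms has a genuine gap. You claim that ``varying the choice of $g_0\in G_0\setminus H$ and $g_1\in G_1\setminus H$ already yields uncountably many distinct repelling points $\xi^-_{g_0g_1}$''. This is false in general: when $G$ is countable there are only countably many pairs $(g_0,g_1)$, hence only countably many such repelling points; and even for uncountable $G$, the repelling point of $g_0g_1$ depends only on the cosets $g_0H$ and $g_1H$, so if both indices $[G_i:H]$ are finite you get only finitely many repelling points of this special form. Density of the set of repelling points does not help either, since the atom set $A_\mu$ can itself be a countable dense subset. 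Consequently the sentence ``Hence I can choose $a$ so that $\xi^-\notin A_\mu$'' is not justified.

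The standard repair is a two-step argument. Nondegeneracy guarantees two hyperbolic elements $a,b$ with pairwise disjoint fixed-point sets $\{\xi^+_a,\xi^-_a\}$ and $\{\xi^+_b,\xi^-_b\}$ in $\partial T$ (this is exactly ``strong hyperbolicity'' in \cite{HP}). Given any $\mu$, applying $a^n$ and passing to a weak-$*$ limit yields a measure supported on $\{\xi^+_a,\xi^-_a\}$; since $\xi^-_b\notin\{\xi^+_a,\xi^-_a\}$, applying $b^n$ then pushes this measure to $\delta_{\xi^+_b}$. With this correction your direct argument goes through.

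For comparison, the paper does not argue directly at all: it quotes \cite[Proposition~19]{HP} to get minimality and strong hyperbolicity (i.e., a ``general type'' action), then invokes \cite[Proposition~4.26]{Boudec-Bon} to obtain \emph{extreme} proximality of $G\curvearrowright\overline{\partial T}$, and finally Glasner's theorem \cite[Theorem~2.3]{Glasner} that extreme proximality implies strong proximality. Your approach unpacks the same mechanism by hand; it is more self-contained, while the paper's route is shorter but relies on the cited machinery.
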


\begin{proof}
The action of a nondegenerate amalgam $G=G_0*_H G_1$ on its Bass-Serre Tree $T$ is minimal and strongly hyperbolic (see \cite[Proposition~19]{HP}),
that is, of ``general type'' in the sense of \cite[Section~4.3]{Boudec-Bon}.
It follows that the action of $G$ on $\overline{\partial T}$ is minimal by \cite[Proposition~19]{HP} and Lemma~\ref{lem:interior},
extremely proximal by \cite[Proposition~4.26]{Boudec-Bon} (see \cite[Section~2.1]{Boudec-Bon} for terminology),
and thus strongly proximal by \cite[Theorem~2.3~(3.3)]{Glasner}.
Hence, $\overline{\partial T}$ is a $G$-boundary.
\end{proof}

\begin{theorem}\label{thm:K0K1-amenable}
Suppose that $K_0$ or $K_1$ is amenable.
Then $G$ is $C^*$-simple if and only if both $K_0$ and $K_1$ are trivial.
\end{theorem}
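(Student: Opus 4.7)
The plan is to handle the two directions of the biconditional separately. The ``if'' direction is immediate: if $K_0=K_1=\{e\}$ then Theorem~\ref{thm:trivial-Ks} makes $G$ a Powers group, hence $C^*$-simple, and no amenability hypothesis is used here.

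For the ``only if'' direction I would argue by contrapositive, generalizing the first proof of Proposition~\ref{prop:cstarsimple-amalgam} so as to trade amenability of all of $H$ for the weaker amenability of $K_0$ or $K_1$. Suppose one of $K_0,K_1$ is amenable and at least one of them is nontrivial. The remark after \eqref{eq:K0-K1-relation} gives $K_0=\{e\}\iff K_1=\{e\}$, and \eqref{eq:K0-K1-relation} embeds each of $K_0,K_1$ into a conjugate of the other, so both are nontrivial and both are amenable. Set $L=K_0K_1$. Normality of $K_0,K_1$ in $H$ makes $L$ a normal subgroup of $H$, and the extension $1\to K_0\to L\to K_1/(K_0\cap K_1)\to 1$ of amenable groups shows $L$ is amenable; clearly $L\supseteq K_0\neq\{e\}$. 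I would then show $L$ is a nontrivial amenable recurrent subgroup of $G$ in the sense of \cite[Definition~5.1]{Kennedy}, i.e.\ the orbit closure $\overline{\{gLg^{-1}:g\in G\}}$ in $\operatorname{Sub}(G)$ (Chabauty topology) avoids the trivial subgroup; then \cite[Theorem~1.1]{Kennedy} forces $G$ to fail $C^*$-simplicity.

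The main technical step, and the part I expect to be the hardest, is the following Key Lemma: for every $g\in G$ the conjugate $gLg^{-1}$ contains at least one of the fixed nontrivial subgroups $L$, $K_0$, or $K_1$. The case $g\in H$ is immediate from normality of $L$ in $H$. For $g\in T_{0,k}$ with $k\geq 1$, I would prove by induction on $k$ the sharper statement $gK_{k\bmod 2}g^{-1}\supseteq K_0$, which gives $gLg^{-1}\supseteq K_0$: the base case $k=1$ is the containment $g_0K_1g_0^{-1}\supseteq K_0$ read off directly from \eqref{eq:K0-K1-relation}, and the inductive step peels off the last letter $g_{k-1}$ and feeds in whichever of the two containments $g_0K_1g_0^{-1}\supseteq K_0$ or $g_1K_0g_1^{-1}\supseteq K_1$ matches the parity of $k-1$. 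The symmetric argument handles $g\in T_{1,k}$ and gives $gLg^{-1}\supseteq K_1$.

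Given the Key Lemma, recurrence of $L$ then reduces to the trichotomy that opens the first proof of Proposition~\ref{prop:cstarsimple-amalgam}: any sequence $(g_n)\subseteq G$ has a subsequence lying entirely in one of $H$, $\bigcup_k T_{0,k}$, or $\bigcup_k T_{1,k}$; in each case the Key Lemma places a fixed nontrivial element of $L$, $K_0$, or $K_1$ (respectively) into $g_{n_k}Lg_{n_k}^{-1}$ for every $k$, which prevents Chabauty convergence to $\{e\}$.
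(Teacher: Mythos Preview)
Your argument is correct and takes a genuinely different route from the paper's. The paper argues through the action of $G$ on the compactified boundary $\overline{\partial T}$ of its Bass--Serre tree: it observes that for every $x\in\partial T$ the group $G_x^o$ is an increasing union of conjugates of $K_0$ and $K_1$, hence amenable; since $\overline{\partial T}$ is a $G$-boundary (Lemma~\ref{lem:T-boundary}), $C^*$-simplicity of $G$ forces $C(\overline{\partial T})\rtimes_r G$ to be simple via \cite[Corollary~7.5]{BKKO}, and then \cite[Theorem~14~(2)]{Ozawa} makes the action on $\overline{\partial T}$ topologically free, i.e.\ $\operatorname{int}G=\{e\}$, whence $K_0=K_1=\{e\}$. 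Your approach sidesteps the tree boundary and the crossed-product machinery altogether: you work directly with Kennedy's recurrence criterion applied to $L=K_0K_1$, and your Key Lemma --- that every $G$-conjugate of $L$ contains one of the three fixed nontrivial subgroups $K_0$, $K_1$, $L$ --- is a clean combinatorial sharpening of the first proof of Proposition~\ref{prop:cstarsimple-amalgam}. The introduction of $L$ rather than $K_0$ alone is exactly the right move, since it lets you absorb whichever of $K_0,K_1$ the parity of the word length selects.

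One caveat worth flagging: the paper's statement carries no countability hypothesis on $G$, and its boundary-action proof works in that generality, whereas Proposition~\ref{prop:cstarsimple-amalgam} (whose first proof you are extending) explicitly assumes $G$ countable precisely because it invokes \cite[Theorem~1.1]{Kennedy}. Your Key Lemma is in fact strong enough that the orbit closure of $L$ in $\operatorname{Sub}(G)$ avoids $\{e\}$ even in the net sense (any subnet of conjugates eventually contains a fixed $K_i$, and Chabauty limits inherit this), so the obstruction is only in whether Kennedy's implication ``nontrivial amenable recurrent subgroup $\Rightarrow$ not $C^*$-simple'' is stated for arbitrary discrete groups; you should either verify this or add countability as a standing assumption.
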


\begin{proof}
Let $x\in\partial T$ be represented by a sequence of vertices $g_0G_1$, $g_0g_1G_0$, etc., where $g_i\in S_{i\pmod 2}$
(of course, the argument is similar if it starts with $G_1$, $g_1G_0$, etc).
Then $G_x^o$ is the direct limit of the sequence $K(g_0)\subseteq K(g_0g_1)\subseteq\dotsb$, that is, of
\[
K_0 \subseteq g_0K_1g_0^{-1} \subseteq g_0g_1K_0g_1^{-1}g_0^{-1} \subseteq \dotsb.
\]
Clearly, $K_0$ is amenable if and only if $K_1$ is amenable,
since either of them is a subgroup of a conjugate of the other, see \eqref{eq:K0-K1-relation}.
Therefore, all $K((g_0)g_1\dotsm g_n)$ are also amenable, since they are conjugates of $K_0$ or $K_1$, see \eqref{eq:fixators}.
As the class of amenable groups is closed under direct limits, we have that $G_x^o$ is amenable.

Assume that $G$ is $C^*$-simple.
Since $\overline{\partial T}$ is a $G$-boundary by Lemma~\ref{lem:T-boundary} and $G$ is assumed to be $C^*$-simple,
we may use \cite[Corollary~7.5]{BKKO} to say that $C(\overline{\partial T})\rtimes_r G$ is simple.
Then it follows from \cite[Theorem~14~(2)]{Ozawa} that $G \curvearrowright \overline{\partial T}$ is topologically free,
so $G \curvearrowright \partial T$ is topologically free by Lemma~\ref{lem:interior}, that is, $\operatorname{int}G=\{e\}$.
Hence, Proposition~\ref{prop:slender} gives that both $K_0$ and $K_1$ are trivial.

Conversely, if $K_0=K_1=\{e\}$, then $G$ is $C^*$-simple by Theorem~\ref{thm:trivial-Ks}.
\end{proof}

A similar argument shows that $G_x^o$ is nonamenable for all $x\in\partial T$ if $K_0$ or $K_1$ is nonamenable,
and it seems likely that this implies $C^*$-simplicity of $G$.

\begin{remark}
Any amalgamated free product where $K_0$ and $K_1$ are nontrivial, amenable, and $K_0 \cap K_1=\{e\}$ is not $C^*$-simple, but has the unique trace property.
As noted in Remark~\ref{rmkGamma}, the group $\Gamma$ of Section~\ref{sec:example} is isomorphic to one of Le~Boudec's examples from \cite[Section~5]{Boudec}.
However, if the groups $G_0$ and $G_1$ are nonisomorphic, then such an amalgamated product will not be covered by \cite[Section~5]{Boudec}.  
\end{remark}

\section{Radical and residual classes of groups}\label{sec:radical}

In this section, by a \emph{class of groups}, we will always mean a class $X$ of groups that contains the trivial group
and is closed under isomorphisms, i.e., if $G\in X$ and $H\cong G$, then $H\in X$.
In \cite{Robinson}, this is called a \emph{group theoretical class}.

Let $X$ be a class of groups and let $G$ be any group.
Define $\rho(G)$ as the normal subgroup of $G$ generated by all normal subgroups of $G$ that belong to $X$,
and $\rho^*(G)$ as the intersection of all normal subgroups of $G$ with quotient belonging to $X$, i.e.,
\[
\begin{split}
\rho(G)&=\prod\{N : N\triangleleft G \text{ and } N\in X\},\\
\rho^*(G)&=\bigcap\{N : N\triangleleft G \text{ and } G/N\in X\}.
\end{split}
\]
These are both normal subgroups of $G$, called join and meet of the respective families.
Whenever more than one class is around, we will often write $\rho_X$ and $\rho_X^*$.

\begin{definition}\label{def:radical-residual}
A class of groups $X$ is called a \emph{radical} class if it is closed under quotients (i.e., closed under homomorphic images),
and if for any group $G$ we have
\begin{itemize}
\item[(i)] $\rho(G)\in X$,
\item[(ii)] $\rho(G/\rho(G))=\{e\}$.
\end{itemize}
A class of groups $X$ is called a \emph{residual} (or \emph{coradical} or \emph{semisimple}) class if it is closed under normal subgroups,
and if for any group $G$ we have
\begin{itemize}
\item[(i*)] $G/\rho^*(G)\in X$,
\item[(ii*)] $\rho^*(\rho^*(G))=\rho^*(G)$.
\end{itemize}
\end{definition}

Clearly, if $X$ is radical, then for any $G$ we have $\rho(\rho(G))=\rho(G)$, and $G\in X$ if and only if $\rho(G)=G$.
Moreover, if $X$ is residual, then for any $G$ we have $\rho^*(G/\rho^*(G))=\{e\}$, and $G\in X$ if and only if $\rho^*(G)=\{e\}$.

The above definitions are not completely consistent within references;
some say that a class is radical if it is closed under quotients and (i) holds,
and \emph{strict} radical if (ii) holds as well (similarly for residual),
and there are possibly other variations.

\begin{proposition}
A class of groups is radical if and only if it is closed under quotients, extensions, and satisfies \textup{(i)}.
Moreover, a class of groups is residual if and only if it is closed under normal subgroups, extensions, and satisfies \textup{(i*)}.
\end{proposition}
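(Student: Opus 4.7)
The plan is to treat the radical and residual cases in parallel, exploiting the formal duality between the join $\rho$ and the meet $\rho^*$ of normal subgroups. In each case, one direction shows that a radical (resp.\ residual) class is automatically closed under extensions, while the other direction shows that closure under extensions (together with the remaining listed hypotheses) forces the strict axiom (ii) or (ii*). Note that the ``closed under quotients'' and ``closed under normal subgroups'' conditions are already part of the hypotheses on both sides, so only extension-closure and the strict axiom need to be exchanged.

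For the radical statement, I would prove extension-closure first: given a short exact sequence $1 \to N \to G \to Q \to 1$ with $N, Q \in X$, the inclusion $N \subseteq \rho(G)$ makes $G/\rho(G)$ a quotient of $Q$, hence an element of $X$ by quotient-closure. But $G/\rho(G)$ then coincides with its own radical, so by (ii) it is trivial, giving $G = \rho(G) \in X$ by (i). For the converse, assuming only quotient-closure, extension-closure, and (i), I would verify (ii) by lifting $\overline{N} := \rho(G/\rho(G))$ to $N \triangleleft G$; then $N$ fits into $1 \to \rho(G) \to N \to \overline{N} \to 1$ with both ends in $X$ by (i), so $N \in X$ by extension-closure, forcing $N \subseteq \rho(G)$ and hence $\overline{N} = \{e\}$.

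The residual statement runs dually. If $X$ is residual and $1 \to N \to G \to Q \to 1$ is given with $N, Q \in X$, then $\rho^*(G) \subseteq N$ (since $G/N = Q \in X$), and since $\rho^*(G) \triangleleft G$ sits inside $N \in X$, closure under normal subgroups yields $\rho^*(G) \in X$; but then $\rho^*(\rho^*(G)) = \{e\}$, and (ii*) forces $\rho^*(G) = \{e\}$, i.e., $G \in X$. For the converse, assuming closure under normal subgroups, extension-closure, and (i*), I would set $L = \rho^*(G)$ and note that $\rho^*(L)$ is characteristic in $L$, hence normal in $G$. The short exact sequence $1 \to L/\rho^*(L) \to G/\rho^*(L) \to G/L \to 1$ has both ends in $X$ by (i*) (applied to $L$ and to $G$), so extension-closure gives $G/\rho^*(L) \in X$, whence $L = \rho^*(G) \subseteq \rho^*(L) \subseteq L$, which is exactly (ii*).

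I do not expect any serious obstacle; the arguments are essentially bookkeeping within the definitions. The one small point worth stating explicitly is the claim in the residual backward direction that $\rho^*(L)$ is characteristic in $L$, which is immediate because $X$ is closed under isomorphisms, so every automorphism of $L$ permutes the defining family of normal subgroups of $L$ with quotient in $X$ and therefore preserves their intersection.
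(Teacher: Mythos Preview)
Your argument is correct in all four implications. The paper itself does not give a direct proof: it simply refers to Robinson's \emph{Finiteness conditions and generalized soluble groups}, Theorems~1.32 and~1.35, after noting that conditions (i) and (i*) translate into Robinson's $NX=X$ and $RX=X$. So rather than differing in mathematical strategy, your proposal supplies the self-contained argument that the paper delegates to the reference; the lift-and-use-extension-closure manoeuvres you give are exactly the standard ones found there. The only addition worth making explicit (which you already flagged) is why $\rho^*(L)$ is normal in $G$ in the residual backward direction; your characteristic-subgroup justification via isomorphism-closure of $X$ is the right one.
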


\begin{proof}
This is explained in \cite[Theorems~1.32 and~1.35]{Robinson},
where (i) and (i*) are equivalent to the properties $NX=X$ and $RX=X$ of \cite[p.~20 and p.~23]{Robinson}, respectively,
see \cite[p.~19]{Robinson}.
\end{proof}

Let $X$ be a class of groups closed under quotients, and define $X^*$ to be all groups satisfying $\rho(G)=\{e\}$,
that is, the class of groups with no nontrivial normal subgroups in $X$, i.e.,
\[
X^*=\{G : \rho(G)=\{e\}\}=\{G : N\triangleleft G, N\neq \{e\}\Rightarrow N\notin X\}.
\]
Similarly, if $X$ is a class of groups closed under normal subgroups, define $X_*$ to be all groups satisfying $\rho^*(G)=G$,
that is, the class of groups with no nontrivial quotients in $X$, i.e.,
\[
X_*=\{G : \rho^*(G)=G\}=\{G : N\triangleleft G, N\neq G\Rightarrow G/N\notin X\}.
\]

\begin{proposition}\label{prop:radical duality}
Let $X$ be a class of groups closed under quotients.
Then $X$ is radical if and only if $X=(X^*)_*$ if and only if $X=Y_*$ for some class of groups $Y$ that is closed under normal subgroups.

Let $X$ be a class of groups closed under normal subgroups.
Then $X$ is residual if and only if $X=(X_*)^*$ if and only if $X=Y^*$ for some class of groups $Y$ that is closed under quotients.
\end{proposition}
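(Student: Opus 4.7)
The plan is to prove the first characterization in detail, and then note that the second follows by a formally dual argument, interchanging the roles of ``normal subgroup'' and ``quotient'' (and correspondingly $\rho$ and $\rho^*$). I would establish the cycle $X$ radical $\Longrightarrow X=(X^*)_* \Longrightarrow X=Y_*$ for some $Y$ closed under normal subgroups $\Longrightarrow X$ radical.

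For the first implication, both inclusions should follow directly from the definitions. If $G\in X$ and $N$ is a proper normal subgroup of $G$, then $G/N\in X$ by closure under quotients, so $\rho_X(G/N)\supseteq G/N\neq\{e\}$, and hence $G/N\notin X^*$; this gives $G\in(X^*)_*$. Conversely, if $G\in(X^*)_*$, property~(ii) of a radical class yields $G/\rho_X(G)\in X^*$, and since no proper normal quotient of $G$ may lie in $X^*$, we must have $\rho_X(G)=G$, i.e.\ $G\in X$. The second implication reduces to checking that $X^*$ is closed under normal subgroups when $X$ is radical: if $N\triangleleft G$ with $G\in X^*$, then $\rho_X(N)$ is characteristic in $N$ (since $X$ is closed under isomorphisms), hence normal in $G$, and belongs to $X$ by property~(i), so $\rho_X(N)\subseteq\rho_X(G)=\{e\}$, which shows $N\in X^*$.

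The main obstacle will be the third implication: showing that $Y_*$ is a radical class whenever $Y$ is closed under normal subgroups. Closure of $Y_*$ under quotients is routine, since a quotient of a quotient of $G$ is again a quotient of $G$. The key technical step is closure of $Y_*$ under extensions: given a short exact sequence $1\to A\to M\to B\to 1$ with $A,B\in Y_*$ and any $K\triangleleft M$ with $M/K\in Y$, the subgroup $AK/K\cong A/(A\cap K)$ is normal in $M/K$, hence lies in $Y$; but being a quotient of $A\in Y_*$ it must be trivial, so $A\subseteq K$, and then $M/K$ becomes a quotient of $B\in Y_*$, forcing $M/K=\{e\}$.

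With extension-closure in hand, both axioms in the definition of a radical class for $Y_*$ follow. For~(i), if $R:=\rho_{Y_*}(G)$ admitted a nontrivial quotient $R/K\in Y$, one picks $N\triangleleft G$ with $N\in Y_*$ and $N\not\subseteq K$; then $NK/K$ is a nontrivial normal subgroup of $R/K\in Y$, hence in $Y$, while being a quotient of $N$ it lies in $Y_*$---contradicting that $Y\cap Y_*$ contains only the trivial group. For~(ii), any normal subgroup $M/R$ of $G/R$ lying in $Y_*$ gives, by extension-closure applied to $1\to R\to M\to M/R\to 1$, that $M\in Y_*$; but then $M\triangleleft G$ forces $M\subseteq R$ by the definition of $R$, so $M/R=\{e\}$. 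This closes the cycle, and the residual statement is obtained by the exact same scheme after dualizing every step.
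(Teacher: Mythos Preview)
The paper's own proof is merely a citation to Robinson's book, so your self-contained argument goes well beyond what the paper does. The third implication and the first are fine; the problem is the second.

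You set up the cycle $X$ radical $\Rightarrow X=(X^*)_* \Rightarrow X=Y_*$ (some $Y$ closed under normal subgroups) $\Rightarrow X$ radical, and for the middle arrow you take $Y=X^*$ and argue that $X^*$ is closed under normal subgroups ``when $X$ is radical'', invoking property~(i). But at this point in the cycle you only know $X=(X^*)_*$, not that $X$ is radical, so this is circular. And the issue is genuine: for a class merely closed under quotients, $X^*$ need not be closed under normal subgroups. Take $X=\{\{e\},\Z/2\Z\}$: then $S_4\in X^*$ (its only nontrivial proper normal subgroups are $V_4$ and $A_4$), yet the normal subgroup $V_4\triangleleft S_4$ has three normal subgroups isomorphic to $\Z/2\Z$, so $V_4\notin X^*$.

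The repair is short: show that the hypothesis $X=(X^*)_*$ alone forces $X^*$ to be closed under normal subgroups. Suppose $G\in X^*$, $N\triangleleft G$, and $\{e\}\neq M\triangleleft N$ with $M\in X$. Let $M^G$ be the normal closure of $M$ in $G$; each conjugate $gMg^{-1}$ is normal in $N$, hence in $M^G\subseteq N$, and lies in $X$. If $K\triangleleft M^G$ with $M^G/K\in X^*$ and $K\neq M^G$, then some $gMg^{-1}\not\subseteq K$, and its image in $M^G/K$ is a nontrivial normal subgroup lying in $X$ (as a quotient of $M$), contradicting $M^G/K\in X^*$. Hence $M^G\in(X^*)_*=X$, giving a nontrivial normal subgroup of $G$ in $X$ and contradicting $G\in X^*$. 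With this in place your cycle closes and the rest of your argument stands.
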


\begin{proof}
This follows from \cite[Theorems~1.38 and~1.39]{Robinson}, see \cite[p.~6-7]{Robinson} for notation.
\end{proof}

\begin{example}\label{ex:AR}
Let $X$ be the class of amenable groups, which is known to be a radical class.
Then \cite[Theorem~1.3]{BKKO} shows that the class of groups with the unique trace property coincides with $X^*$,
and is therefore residual by Proposition~\ref{prop:radical duality}.
Thus, a group is amenable if and only if it does not have any nontrivial quotient with the unique trace property.
\end{example}

\begin{example}
Let $X$ be the class of Powers (resp.\ weak Powers, weak$^*$~Powers) groups,
which is not closed under extensions, so it is not a residual class.
However, $X$ is closed under normal subgroups by Lemma~\ref{lem:normal Powers},
meaning that we can define the class $X_*$ of groups
with no nontrivial quotient which is a Powers (resp.\ weak Powers, weak$^*$~Powers) group.
Moreover, $X_*$ is a radical class, but $\rho_{X_*}(G)=\{e\}$ does \emph{not} imply that $G\in X$.

If $X$ is the class of all Powers groups,
then $P=(X_*)^*$ is the residual closure of $X$,
that is, the smallest residual class containing all Powers groups.
In Section~\ref{sec:aish} we will see that the class of $C^*$-simple groups is residual,
and hence it contains $P$ (it could possibly coincide with $P$).
\end{example}

\begin{example}\label{ex:NF}
Some radical classes of groups are e.g.\ locally finite groups, elementary amenable groups, amenable groups,
and groups that do not contain any nonabelian free subgroup.
We denote the latter class by $NF$,
which gives rise to the residual class $AF=(NF)^*$,
consisting of all groups for which every nontrivial normal subgroup contains a free nonabelian subgroup,
and $G\in NF$ if and only if $\rho^*_{AF}(G)=G$.
Since every amenable group belongs to $NF$, every group in $AF$ has the unique trace property.
Moreover, \eqref{eq:chain} gives that every amalgamated free product with trivial kernel belongs to $AF$.
\end{example}

Both the class of amenable groups and $NF$ are closed under subgroups,
but in general, radical classes are not necessarily closed even under normal subgroups.
A radical class that is closed under normal subgroups is sometimes called \emph{hereditary}.

\begin{lemma}\label{lem:normal radical}
Let $X$ be a class of groups satisfying (i).
Then $X$ is closed under normal subgroups if and only if for any group $G$ and normal subgroup $N$ of $G$ we have
\[
\rho(N)=\rho(G)\cap N.
\]
In particular, this implies that $\rho(N)$ is a normal subgroup of $G$.
\end{lemma}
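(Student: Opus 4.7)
The plan is to prove both implications directly from the definition of $\rho$, using (i) in two different ways. The forward direction splits into showing each inclusion in $\rho(N)=\rho(G)\cap N$. For the reverse direction, I expect to show that if $H\in X$ and $N\triangleleft H$, then applying the assumed identity with $G=H$ forces $N=\rho(N)\in X$. The last assertion is immediate from the formula.

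For the forward direction, assume $X$ is closed under normal subgroups and fix $N\triangleleft G$. To see $\rho(G)\cap N\subseteq\rho(N)$, note that by (i), $\rho(G)\in X$, and $\rho(G)\cap N$ is normal in $\rho(G)$ (since both $\rho(G)$ and $N$ are normal in $G$), so the hereditary hypothesis yields $\rho(G)\cap N\in X$; as it is also normal in $N$, it is contained in $\rho(N)$. For the reverse inclusion, the main step is to check that $\rho(N)$ is normal in $G$: for every $g\in G$, the conjugate $g\rho(N)g^{-1}$ is a normal subgroup of $gNg^{-1}=N$ that belongs to $X$ (by closure under isomorphism), so by maximality of $\rho(N)$ it sits inside $\rho(N)$. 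Since $\rho(N)\in X$ by (i) applied to $N$ and is normal in $G$, it sits inside $\rho(G)$; combined with $\rho(N)\subseteq N$ this gives the other inclusion.

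For the reverse direction, suppose the identity $\rho(N)=\rho(G)\cap N$ holds whenever $N\triangleleft G$. Let $H\in X$ and let $N\triangleleft H$; I want to conclude $N\in X$. Since $H$ is itself a normal subgroup of $H$ belonging to $X$, we have $\rho(H)=H$. Applying the assumed identity with $G=H$ gives $\rho(N)=H\cap N=N$, and then (i) applied to $N$ yields $N=\rho(N)\in X$. Finally, the ``in particular'' clause is immediate: $\rho(G)$ and $N$ are both normal in $G$, hence so is their intersection $\rho(N)$.

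I do not expect any substantive obstacle; the only subtle point is remembering to verify that $\rho(N)$ is $G$-normal (using invariance of $X$ under isomorphism together with the maximality characterization of $\rho(N)$), which is what makes the containment $\rho(N)\subseteq\rho(G)$ legitimate.
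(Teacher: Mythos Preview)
Your proof is correct. The reverse direction matches the paper's one-line argument verbatim, and for the forward direction the paper simply cites Robinson's \emph{Finiteness conditions and generalized soluble groups} (Lemma~1.31, Corollaries~1 and~2), so you have supplied precisely the details the paper omits by reference; in particular your verification that $\rho(N)$ is $G$-normal via conjugation and maximality is the standard argument and is what makes the inclusion $\rho(N)\subseteq\rho(G)$ go through.
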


\begin{proof}
If $G\in X$ and $N$ is normal in $G$, then $\rho(N)=\rho(G)\cap N=G\cap N=N$, so $N\in X$.

The converse is explained in \cite[Lemma~1.31, Corollaries~1 and~2]{Robinson}.
\end{proof}

A direct proof in the special case of countable amenable groups is given in \cite[Corollary~B.4]{TD}.
The result below is similar to \cite[Corollary~B.6]{TD}, using Lemma~\ref{lem:normal radical}.

For a group $G$ and a subset $N$, we let $Z_G(N)$ and $Z(G)$ denote the centralizer of $N$ in $G$,
and the center of $G$, respectively.

\begin{lemma}\label{lem:centralizer1}
Let $X$ be a class of groups that satisfies (i) and is closed under normal subgroups.
Assume that $G$ is any group and $N$ is a normal subgroup of $G$ such that $\rho(N)=\{e\}$.
Then~$\rho(G)=\rho(Z_G(N))$.
\end{lemma}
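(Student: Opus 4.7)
The plan is to apply Lemma~\ref{lem:normal radical} twice and exploit the standard commutator trick for disjoint normal subgroups. First I would observe that since $N \triangleleft G$, its centralizer $Z_G(N)$ is also normal in $G$. Hence Lemma~\ref{lem:normal radical} applies to both $N$ and $Z_G(N)$ as normal subgroups of $G$, yielding
\[
\rho(N) = \rho(G) \cap N \qquad\text{and}\qquad \rho(Z_G(N)) = \rho(G) \cap Z_G(N).
\]

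The key step is then to show that $\rho(G) \subseteq Z_G(N)$, from which the desired equality $\rho(Z_G(N)) = \rho(G)$ follows immediately from the second identity above. To prove this inclusion, I would use the hypothesis $\rho(N) = \{e\}$ together with the first identity to conclude $\rho(G) \cap N = \{e\}$. Now $\rho(G)$ and $N$ are two normal subgroups of $G$ with trivial intersection, so for any $a \in \rho(G)$ and $n \in N$ the commutator $[a,n] = (ana^{-1})n^{-1}$ lies in $N$ (since $N$ is normal) and also equals $a(na^{-1}n^{-1})$, which lies in $\rho(G)$ (since $\rho(G)$ is normal). Thus $[a,n] \in \rho(G) \cap N = \{e\}$, so every element of $\rho(G)$ commutes with every element of $N$, i.e., $\rho(G) \subseteq Z_G(N)$.

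Combining the inclusion $\rho(G) \subseteq Z_G(N)$ with $\rho(Z_G(N)) = \rho(G) \cap Z_G(N)$ gives $\rho(Z_G(N)) = \rho(G)$, completing the argument. There is no genuine obstacle here: the whole proof is a two-line application of Lemma~\ref{lem:normal radical} plus the elementary commutator identity, and the only thing to check carefully is that $Z_G(N)$ is normal in $G$, which follows from normality of $N$ since conjugation in $G$ permutes $N$ and so also permutes $Z_G(N)$.
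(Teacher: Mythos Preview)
Your proposal is correct and follows essentially the same approach as the paper's proof. The paper compresses the argument into two sentences---using $\rho(N)=\rho(G)\cap N=\{e\}$ to conclude that the normal subgroups $\rho(G)$ and $N$ commute, hence $\rho(G)\subseteq Z_G(N)$, and then applying Lemma~\ref{lem:normal radical} again to get $\rho(Z_G(N))=\rho(G)\cap Z_G(N)=\rho(G)$---but the underlying logic, including the commutator trick and the normality of $Z_G(N)$, is identical to what you wrote out explicitly.
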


\begin{proof}
Since $\{e\}=\rho(N)=\rho(G)\cap N$, the normal subgroups $\rho(G)$ and $N$ commute, so $\rho(G)\subseteq Z_G(N)$.
Hence, $\rho(Z_G(N))=\rho(G)\cap Z_G(N)=\rho(G)$.
\end{proof}

\begin{lemma}\label{lem:centralizer2}
Let $X$ be a class of groups that satisfies (i), is closed under normal subgroups, and contains all abelian groups.
Assume that $G$ is any group and $N$ is a normal subgroup of $G$ such that both $\rho(N)$ and $\rho(G/N)$ are trivial.
Then $\rho(G)=\{e\}$.
\end{lemma}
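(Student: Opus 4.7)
The plan is to reduce the problem via Lemma~\ref{lem:centralizer1} to analyzing $\rho(Z_G(N))$, then show that this group meets $N$ trivially and has trivial image in $G/N$. Since $\rho(N) = \{e\}$, Lemma~\ref{lem:centralizer1} gives $\rho(G) = \rho(Z_G(N))$, so it suffices to prove $\rho(Z_G(N)) = \{e\}$.

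First I would control $Z_G(N) \cap N$. This intersection is exactly the center $Z(N)$, which is abelian and therefore belongs to $X$ by hypothesis. As $Z(N)$ is a normal (in fact characteristic) subgroup of $N$ lying in $X$, it is contained in $\rho(N) = \{e\}$. Hence $Z_G(N) \cap N = \{e\}$, and in particular $\rho(Z_G(N)) \cap N = \{e\}$.

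Next I would exploit $\rho(G/N) = \{e\}$. A routine check shows that $Z_G(N)$ is normal in $G$ because $N$ is, so Lemma~\ref{lem:normal radical} promotes $\rho(Z_G(N))$ to a normal subgroup of $G$. The quotient map $G \to G/N$ then sends $\rho(Z_G(N))$ onto a normal subgroup of $G/N$ that is isomorphic to $\rho(Z_G(N))$ itself, because the kernel of this restriction is contained in $Z_G(N) \cap N = \{e\}$. By condition~(i), $\rho(Z_G(N)) \in X$, so this isomorphic image also lies in $X$ and is therefore contained in $\rho(G/N) = \{e\}$. Thus $\rho(Z_G(N)) \subseteq N$, and combining with $\rho(Z_G(N)) \subseteq Z_G(N)$ yields $\rho(Z_G(N)) \subseteq Z_G(N) \cap N = \{e\}$, as required.

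The main point to be careful about is bookkeeping the various normalities: we need Lemma~\ref{lem:normal radical} to promote $\rho$ applied to a normal subgroup of $G$ from normality in the ambient subgroup to normality in $G$ itself, so that the defining property of $\rho(G/N)$ can actually be invoked at the end. The hypothesis that $X$ contains all abelian groups enters in exactly one place, namely to force $Z(N) \in X$ so that it gets absorbed into the trivial $\rho(N)$.
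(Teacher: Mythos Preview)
Your proof is correct and follows essentially the same route as the paper's: reduce via Lemma~\ref{lem:centralizer1} to $\rho(Z_G(N))$, use that $Z_G(N)\cap N=Z(N)$ is abelian hence in $X$ and so trivial, and then push $\rho(Z_G(N))$ into $G/N$ to kill it. The paper phrases the last step slightly differently---noting that the map $Z_G(N)\to G/N$ is injective and then deducing $\rho(Z_G(N))=\{e\}$ from $\rho(G/N)=\{e\}$---but your more explicit bookkeeping (using Lemma~\ref{lem:normal radical} to get normality of $\rho(Z_G(N))$ in $G$ and condition~(i) to get membership in $X$) is a perfectly good unpacking of the same idea.
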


\begin{proof}
First, $Z(N)$ is normal in $N$, so Lemma~\ref{lem:normal radical} gives that $\rho(Z(N))\subseteq\rho(N)=\{e\}$,
and since $Z(N)\in X$, we must have $Z(N)=\{e\}$.
Thus, the map $Z_G(N)\to G/N$, $x\mapsto xN$ is injective, and $\rho(G/N)=\{e\}$ implies $\rho(Z_G(N))=\{e\}$.
By Lemma~\ref{lem:centralizer1}, we thus get $\rho(G)=\{e\}$.
\end{proof}

\begin{lemma}
Let $X$ be a residual class, and suppose that $X_*$ is closed under normal subgroups and contains all abelian groups.
Let $G$ be any group and $N$ a normal subgroup of $G$.
Then $G$ belongs to $X$ if and only if both $N$ and $Z_G(N)$ belong to $X$.
\end{lemma}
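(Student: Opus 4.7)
The first step is to reformulate membership in $X$: since $X$ is residual, Proposition~\ref{prop:radical duality} yields $X = (X_*)^*$, so $G \in X$ precisely when $G$ has no nontrivial normal subgroup lying in $X_*$. I will work with this characterization throughout.

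The forward direction is then immediate. Both $N$ and $Z_G(N)$ are normal in $G$ (the centralizer of a normal subgroup is always normal), and since $X$ is closed under normal subgroups (being residual), $G \in X$ yields $N, Z_G(N) \in X$.

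For the converse, my plan is to argue by contrapositive. Assume $G \notin X$, so there is a nontrivial $M \triangleleft G$ with $M \in X_*$; I want to exhibit a nontrivial normal subgroup of $Z_G(N)$ in $X_*$, and I claim $M$ itself will do. The crux is to show $M \subseteq Z_G(N)$. First, $M \cap N$ is normal in $G$, hence normal in $M$, so the hypothesis that $X_*$ is closed under normal subgroups gives $M \cap N \in X_*$. But $M \cap N$ is also normal in $N$, and $N \in X = (X_*)^*$ forces $M \cap N = \{e\}$. The standard commutator trick now applies: for $m \in M$, $n \in N$, we have $[m,n] = (mnm^{-1})n^{-1} \in N$ and $[m,n] = m(nm^{-1}n^{-1}) \in M$, both by normality, so $[m,n] \in M \cap N = \{e\}$. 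Thus $M$ centralizes $N$, i.e., $M \subseteq Z_G(N)$; being normal in $G$ and contained in $Z_G(N)$, it is normal in $Z_G(N)$, and since $M$ is nontrivial and lies in $X_*$, we conclude $Z_G(N) \notin X$.

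The main obstacle is really just spotting the commutator argument; once $M \cap N = \{e\}$ is in hand, everything else is mechanical. One could alternatively try to invoke Lemma~\ref{lem:centralizer1} directly for the class $X_*$, taking $\rho_{X_*}(N) = \{e\}$ and deducing $\rho_{X_*}(G) = \rho_{X_*}(Z_G(N))$; but that route requires verifying that $X_*$ satisfies axiom~(i), which is not among the listed hypotheses, so the contrapositive argument above is cleaner. Interestingly, the hypothesis that $X_*$ contains all abelian groups does not appear to enter; it was essential for Lemma~\ref{lem:centralizer2} (to force $Z(N)$ into $X_*$), but here the triviality of $M \cap N$ is extracted directly from $N \in X$, so that step is bypassed.
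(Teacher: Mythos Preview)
Your proof is correct and shares the same core idea as the paper's, namely the commutator argument: once $M\cap N=\{e\}$ for a normal $M\in X_*$, normality forces $[M,N]\subseteq M\cap N=\{e\}$, so $M\subseteq Z_G(N)$. The paper packages this inside Lemma~\ref{lem:centralizer1}, applied to the class $X_*$, to conclude $\rho_{X_*}(G)=\rho_{X_*}(Z_G(N))$; you instead work directly with a single witness $M\triangleleft G$ in $X_*$ rather than with the whole radical $\rho_{X_*}(G)$.

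Your reservation about invoking Lemma~\ref{lem:centralizer1} is unnecessary, though: axiom~(i) for $X_*$ \emph{does} follow from the hypotheses. Since $X$ is residual, Proposition~\ref{prop:radical duality} gives $X=(X_*)^*$; but $X_*$ is always closed under quotients (an easy check from the definition), and applying the other half of Proposition~\ref{prop:radical duality} to the class $X_*$ shows that $X_*$ is radical, hence satisfies~(i). So the paper's route via Lemma~\ref{lem:centralizer1} is legitimate. Your direct argument has the minor virtue of not needing this extra step.

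Your closing observation is correct: the hypothesis that $X_*$ contain all abelian groups is not used. The paper's proof mentions both Lemmas~\ref{lem:centralizer1} and~\ref{lem:centralizer2}, but only the former is needed for this statement, and it does not require the abelian hypothesis.
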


\begin{proof}
Clearly, $X_*$ satisfies the conditions of Lemmas~\ref{lem:centralizer1} and~\ref{lem:centralizer2},
and by Proposition~\ref{prop:radical duality} we know that $G\in X$ if and only if $\rho_{X_*}(G)=\{e\}$.
\end{proof}

The above result is an analog of \cite[Theorem~1.4]{BKKO},
while the result below shows that when $X$ is a residual class and $G$ is a group,
then $\rho_{X_*}(G)$ is the smallest normal subgroup of $G$ that produces a quotient in $X$.

\begin{lemma}
Let $X$ be a class of groups closed under normal subgroups
and suppose that $G$ is a group with a normal subgroup $N$ such that $G/N\in X$.
Then $\rho_{X_*}(G)\subseteq N$.
\end{lemma}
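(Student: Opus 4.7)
The plan is to unwind the definition of $\rho_{X_*}(G)$ and reduce the statement to showing that every normal subgroup of $G$ lying in $X_*$ is already contained in $N$. Since $\rho_{X_*}(G)$ is, by definition, the subgroup generated by the family $\{M \triangleleft G : M \in X_*\}$, if every such $M$ sits inside $N$, then so does the subgroup they generate.

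So fix an arbitrary normal subgroup $M$ of $G$ with $M \in X_*$, and consider the intersection $M \cap N$. Since $N \triangleleft G$, certainly $M \cap N \triangleleft M$, so $M/(M \cap N)$ is a genuine quotient of $M$. The second isomorphism theorem gives
\[
M/(M \cap N) \;\cong\; MN/N,
\]
and since $M$ and $N$ are both normal in $G$, the product $MN$ is normal in $G$, so $MN/N$ is a normal subgroup of $G/N$. Now $G/N \in X$ by hypothesis, and $X$ is closed under normal subgroups, so $MN/N \in X$, and hence $M/(M \cap N) \in X$.

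Here is where the assumption $M \in X_*$ kicks in: a group in $X_*$ has no nontrivial quotient lying in $X$. Since $M/(M \cap N)$ is a quotient of $M$ that belongs to $X$, it must be trivial, which gives $M \subseteq N$. Since $M$ was an arbitrary normal subgroup of $G$ in the family defining $\rho_{X_*}(G)$, the conclusion $\rho_{X_*}(G) \subseteq N$ follows.

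There is no real obstacle here beyond correctly invoking the second isomorphism theorem and the closure of $X$ under normal subgroups; the whole argument is essentially three lines once the definitions of $X_*$ and $\rho_{X_*}$ are spelled out.
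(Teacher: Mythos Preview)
Your proof is correct and takes essentially the same approach as the paper: both use the second isomorphism theorem together with closure of $X$ under normal subgroups to see that the image in $G/N$ lies in $X$, forcing the quotient to be trivial. The only cosmetic difference is that you argue directly with each normal $M\in X_*$ generating $\rho_{X_*}(G)$, while the paper argues by contradiction with $\rho_{X_*}(G)$ itself; your version is arguably slightly cleaner since it does not need to justify that $(\rho_{X_*}(G)N)/N$ belongs to $X_*$.
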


\begin{proof}
If $G/N\in X$, then $\rho_{X_*}(G/N)=\{e\}$.
Suppose that $\rho_{X_*}(G)$ is not contained in $N$.
Then $\rho_{X_*}(G)/(\rho_{X_*}(G)\cap N)$ is isomorphic to $(\rho_{X_*}(G)N)/N$,
which is a normal nontrivial subgroup of $G/N$ that belongs to $X_*$.
This is a contradiction.
\end{proof}

\begin{lemma}\label{lem:finite-index}
Let $X$ be a class of groups that is closed under normal subgroups, extensions, and contains all finite groups.
Assume that $G$ is any group and $H$ is a subgroup of $G$ of finite index.
Then $G\in X$ if and only if $H\in X$.
\end{lemma}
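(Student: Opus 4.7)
The plan is to exploit the normal core of $H$ in $G$, namely $N=\bigcap_{g\in G}gHg^{-1}$, which is a normal subgroup of $G$ contained in $H$. Since $H$ has finite index in $G$, it has only finitely many distinct conjugates in $G$, so $N$ is a finite intersection of finite-index subgroups and therefore also has finite index in $G$. In particular, $[G:N]$ is finite and $[H:N]$ is finite, so both quotients $G/N$ and $H/N$ are finite groups. This reduces the statement to the following symmetric setup: $N$ is normal in both $G$ and $H$, with finite quotients on both sides.

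For the forward direction, assuming $G\in X$, I would note that $N\triangleleft G$, so closure under normal subgroups gives $N\in X$. Then $H$ is an extension of the finite group $H/N$ by $N$; since $X$ contains all finite groups and is closed under extensions, it follows that $H\in X$.

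For the backward direction, assuming $H\in X$, the same observation that $N\triangleleft H$ combined with closure under normal subgroups gives $N\in X$. Then $G$ is an extension of the finite group $G/N$ by $N$, so once again closure under extensions together with the inclusion of finite groups yields $G\in X$.

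There is no real obstacle here beyond verifying that the normal core has finite index, which is a standard fact: the conjugation action of $G$ on the finite coset space $G/H$ factors through a homomorphism $G\to\operatorname{Sym}(G/H)$ whose kernel is precisely $N$, so $[G:N]$ divides $[G:H]!$ and is finite. Once this is in hand, both implications are immediate from the three hypotheses on $X$ (closure under normal subgroups, closure under extensions, and inclusion of all finite groups), applied symmetrically.
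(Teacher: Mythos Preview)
Your proof is correct and follows exactly the same approach as the paper: pass to the normal core $N$ of $H$ in $G$, use its finite index to get finite quotients $G/N$ and $H/N$, and then apply closure under normal subgroups and extensions in both directions. You even supply the standard justification for $[G:N]<\infty$ via the action on cosets, which the paper simply cites as well-known.
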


\begin{proof}
Let $N$ be the normal core of $H$ in $G$, that is, the largest normal subgroup of $G$ contained in $H$.
It is well-known that $N$ also has finite index in $G$.
Suppose first that $G\in X$.
Then $N\in X$ and $H/N$ is finite, so $H/N\in X$, and hence $H\in X$.
The converse is similar;
if $H\in X$, then $N\in X$ and $G/N$ is finite, so $G\in X$.
\end{proof}

\section{Amenablish groups and the amenablish radical}\label{sec:aish}

Since the class of $C^*$-simple groups is closed under normal subgroups \cite[Theorem~1.4]{BKKO},
the following definition makes sense in light of Proposition~\ref{prop:radical duality}.

\begin{definition}
We call a group \emph{amenablish} if it has no nontrivial $C^*$-simple quotients.
The class of all amenablish groups is radical,
so every group $G$ has a unique maximal normal amenablish subgroup,
which will be called the \emph{amenablish radical} of $G$.
\end{definition}

It is clear that every amenable group is amenablish,
but not all amenablish groups are amenable, as explained in Corollary~\ref{cor:gamma-amenablish} below.

We will now show that the class of $C^*$-simple groups is residual,
which will imply that a group is $C^*$-simple precisely when its amenablish radical is trivial.
Since the class of $C^*$-simple groups is known to be closed under normal subgroups and extensions by \cite[Theorem~1.4]{BKKO},
we only have to prove that (i*) from Definition~\ref{def:radical-residual} holds.

\begin{proposition}\label{prop:cstarsimple-residual}
Suppose that $G$ is a group and $\{N_\alpha\}_{\alpha\in\Lambda}$ is a family of normal subgroups of $G$
such that $G/N_\alpha$ is $C^*$-simple for all $\alpha$.

Then $G/\bigcap_{\alpha\in\Lambda}N_\alpha$ is $C^*$-simple.
\end{proposition}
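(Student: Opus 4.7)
The plan is to reduce the statement to the free-action characterization of $C^*$-simplicity on the Furstenberg boundary (from \cite{BKKO,KK}), and then transfer fixed points through the canonical quotient maps using Lemma~\ref{lem:boundary-subquotient}.

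First, set $N = \bigcap_{\alpha\in\Lambda} N_\alpha$, which is a normal subgroup of $G$. For each $\alpha\in\Lambda$, the inclusion $N\subseteq N_\alpha$ shows that $N_\alpha/N$ is a normal subgroup of $G/N$, and there is a canonical isomorphism
\[
(G/N)\big/(N_\alpha/N)\cong G/N_\alpha.
\]
Our aim is to show that $G/N$ acts freely on $\partial_F(G/N)$, which by \cite{BKKO,KK} (every $(\partial_F G)^g$ is clopen, so topological freeness equals freeness) is equivalent to $C^*$-simplicity of $G/N$.

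Next, take $g\in G$ with $gN\neq N$, that is, $g\notin N$. By definition of $N$, there exists $\alpha\in\Lambda$ such that $g\notin N_\alpha$, so $gN_\alpha\neq N_\alpha$ in $G/N_\alpha$. Since $G/N_\alpha$ is assumed to be $C^*$-simple, it acts freely on its Furstenberg boundary, so $(\partial_F(G/N_\alpha))^{gN_\alpha}=\varnothing$. Now I invoke Lemma~\ref{lem:boundary-subquotient} with the group $G/N$ playing the role of both ``$G$'' and ``$N$'' in that lemma, and with $L=N_\alpha/N$: the contrapositive gives that $(\partial_F(G/N))^{gN}=\varnothing$, since the image in $\partial_F(G/N_\alpha)$ would otherwise be a $gN_\alpha$-fixed point. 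As this holds for every nontrivial $gN$, the action of $G/N$ on $\partial_F(G/N)$ is free, so $G/N$ is $C^*$-simple.

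The only real subtlety is bookkeeping the correct identification of subgroups when applying Lemma~\ref{lem:boundary-subquotient}; once that is set up, the result falls out immediately. No new machinery is needed beyond the free-action characterization and the already-established subquotient lemma.
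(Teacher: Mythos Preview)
Your proof is correct and considerably more economical than the paper's. The paper proceeds constructively: it first closes the family under finite intersections using \cite[Theorem~1.4]{BKKO}, well-orders it into a decreasing chain $G=N_0\supsetneq N_1\supsetneq\dotsb$ with trivial intersection, and then explicitly builds a $G$-boundary $X=\prod_\alpha\partial_F(N_\alpha/N_{\alpha+1})$, verifying strong proximality via \cite[Lemma~3]{Ozawa} and minimality by a careful hands-on argument exploiting the chain structure, before checking freeness on $X$ factor by factor. You bypass all of this by working directly on $\partial_F(G/N)$: for any nontrivial $gN$ you pick $\alpha$ with $g\notin N_\alpha$ and push a hypothetical fixed point forward along the canonical $G/N$-map $\partial_F(G/N)\to\partial_F(G/N_\alpha)$, which is precisely Lemma~\ref{lem:boundary-subquotient} in the degenerate case where the normal subgroup equals the ambient group. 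Freeness of $G/N_\alpha$ on its Furstenberg boundary then gives the contradiction. Your route needs only the free-action characterization from \cite{KK,BKKO} and one invocation of the universal property of $\partial_F$, whereas the paper's route, while longer and requiring the Axiom of Choice for the well-ordering step, has the minor advantage of producing an explicit free boundary rather than relying on universality.
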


\begin{proof}
For any two indicies $\alpha$ and $\beta$,
the group $(N_\alpha N_\beta)/N_\beta\cong N_\alpha/(N_\alpha\cap N_\beta)$ is a normal subgroup of $G/N_\beta$,
so it is $C^*$-simple again by \cite[Theorem~1.4]{BKKO}.
Moreover, $G/(N_\alpha\cap N_\beta) \to G/N_\alpha$ is surjective with kernel $N_\alpha/(N_\alpha\cap N_\beta)$.
Hence, applying \cite[Theorem~1.4]{BKKO} once more gives that $G/(N_\alpha\cap N_\beta)$ is $C^*$-simple.
We may therefore assume that the family $\{N_\alpha\}_{\alpha\in\Lambda}$ is closed under finite intersections.

It is easy to see, using transfinite induction and the Axiom of Choice,
that we can obtain a well-ordered set $\{ N_\beta \}_{ \beta \in I }$ of normal subgroups of $G$ with the property
\[
\bigcap_{\beta \in I} N_\beta = \bigcap_{\alpha \in \Lambda} N_\alpha \overset{def}{=} N.
\]
After factoring the whole family by $N$, we deduce the following equivalent reformulation of the above statement:
Suppose $G$ is a group with a decreasing (transfinite) sequence
\[
G=N_0\supsetneq N_1\supsetneq N_2\supsetneq \dotsb \supsetneq N_\alpha \supsetneq \dotsb \supsetneq N_\beta \supsetneq \dotsb
\]
that satisfies
\begin{itemize}
\item[(i)] $N_\alpha$ is normal in $G$ for all $\alpha \in I$,
\item[(ii)] $\bigcap_{\alpha \in I} N_\alpha =\{e\}$,
\item[(iii)] $G/N_\alpha$ is $C^*$-simple for all $\alpha \in I$.
\end{itemize}
Then $G$ is $C^*$-simple.

To prove the latter statement, set $X_\alpha=\partial_F(N_\alpha/N_{\alpha+1})$,
where $\partial_F$ denotes the Furstenberg boundary.
Then $X_\alpha$ is a $G$-boundary for all $\alpha$.
Indeed, by \cite[Lemma~5.2]{BKKO} and the normality assumption,
the action of $N_\alpha/N_{\alpha+1}$ on $X_\alpha$ extends uniquely to a boundary action of $G/N_{\alpha+1}$ on $X_\alpha$.
Then, composition with the quotient map gives a boundary action of $G$ on $X_\alpha$ (compare with \cite[Proof of Theorem~1.4]{BKKO}).

Now, set $X=\prod_\alpha X_\alpha$ (with the usual product topology).
We wish to show that the action of $G$ on $X$ is a boundary action, that is, strongly proximal and minimal.
First, \cite[Lemma~3]{Ozawa} gives that the action of $G$ on $X$ is strongly proximal, since it is strongly proximal on each factor.
Next, take an arbitrary point $x=(x_\alpha)\in X$ and an arbitrary basic open set $U=\prod_\alpha U_\alpha$,
where we have $U_\alpha=X_\alpha$ except for finitely many sets $U_{\alpha_1},\dotsc,U_{\alpha_n}$,
with $\alpha_1 < \dotsb < \alpha_n$.
Note that for all $\alpha$,
the set $(N_\alpha\setminus N_{\alpha+1})x_\alpha$ is dense in $X_\alpha$ and also that $N_\alpha$ acts minimally on $X_\alpha$,
while $N_{\alpha+1}$ acts trivially on $X_\alpha$.
Therefore there exists a group element $g_1 \in N_{\alpha_1}$, such that $g_1 x_{\alpha_1} \in U_{\alpha_1}$.
Also there exists $g_2 \in N_{\alpha_2}$, such that $g_2 g_1 x_{\alpha_2} \in U_{\alpha_2}$.
We continue the argument and finally deduce that there exists $g_n \in N_{\alpha_n}$ with $g_n \cdots g_1 x_{\alpha_n} \in U_{\alpha_n}$. This shows that $g_n \cdots g_1 x \in U$.
We conclude that $Gx$ is dense in $X$,
hence, $X$ is a $G$-boundary.
Note that the last argument does not require the Axiom of Choice,
since we just need the existence of $g_i$'s and not their concrete choice.

We wish to show that this action is free, i.e., that for all nontrivial $g\in G$, the set $X^g=\{x : gx=x\}$ is empty.
So pick an arbitrary $g\in G\setminus\{e\}$.
From the assumption it follows that there must be a unique $\alpha$ such that $g\in N_\alpha\setminus N_{\alpha+1}$.
Clearly, $N_\alpha/N_{\alpha+1}$ is $C^*$-simple, since it is a normal subgroup of $G/N_{\alpha+1}$, which is $C^*$-simple by assumption,
so \cite[Theorem~6.2]{KK} and \cite[Lemma~3.3]{BKKO}
imply that $N_\alpha/N_{\alpha+1}$ acts freely on $\partial_F(N_\alpha/N_{\alpha+1})$,
that is,
\[
X_\alpha^g=\partial_F(N_\alpha/N_{\alpha+1})^{gN_{\alpha+1}}=\varnothing
\]
Hence, $X^g=\prod_\alpha X_\alpha^g=\varnothing$.

The result now follows from \cite[Theorem~6.2]{KK}.
\end{proof}

In Proposition~\ref{prop:cstarsimple-residual} the use of the Axiom of Choice can be avoided for groups that are concretely given,
e.g.\ if $G$ is given by a totally ordered set of generators and relations,
or if $G$ is given by some concrete dynamical properties.

\begin{corollary}
The class of $C^*$-simple groups is a residual class.

Hence, a group $G$ is $C^*$-simple if and only if its amenablish radical $N$ is trivial,
and $N$ is the smallest normal subgroup of $G$ that produces a $C^*$-simple quotient.
\end{corollary}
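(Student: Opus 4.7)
The plan is to extract the corollary from Proposition~\ref{prop:cstarsimple-residual} together with the abstract framework developed in Section~\ref{sec:radical}. Let $X$ denote the class of $C^*$-simple groups. To show that $X$ is residual, I would invoke the proposition at the beginning of Section~\ref{sec:radical} characterizing residual classes as precisely those that are closed under normal subgroups, closed under extensions, and satisfy property~(i*). Closure under normal subgroups and under extensions is exactly \cite[Theorem~1.4]{BKKO}, while (i*) is the content of Proposition~\ref{prop:cstarsimple-residual}: if each $G/N_\alpha$ is $C^*$-simple, then so is $G/\bigcap_\alpha N_\alpha$.

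Having established that $X$ is residual, I would pass to the amenablish radical via Proposition~\ref{prop:radical duality}, which gives the duality $X=(X_*)^*$. By construction $X_*$ is the class of groups with no nontrivial $C^*$-simple quotients, which is the class of amenablish groups, and $\rho_{X_*}(G)$ is by definition the amenablish radical $N$ of $G$. Hence the equivalence $G\in X \iff \rho_{X_*}(G)=\{e\}$ furnished by the duality is exactly the statement that $G$ is $C^*$-simple if and only if $N$ is trivial.

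For the minimality assertion, I would combine two short observations. First, $X_*$ is a radical class, so property~(ii) of Definition~\ref{def:radical-residual} yields $\rho_{X_*}(G/N)=\{e\}$, i.e.\ $G/N$ is $C^*$-simple. Second, the (unnumbered) lemma immediately preceding Section~\ref{sec:aish} shows that $\rho_{X_*}(G)\subseteq M$ for every normal subgroup $M$ of $G$ with $G/M\in X$, which gives the desired minimality of $N$.

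I do not expect any serious obstacle; the corollary is a formal consequence of Proposition~\ref{prop:cstarsimple-residual} and the duality machinery already in place. The only point worth double-checking is that the hypotheses of Proposition~\ref{prop:radical duality} and of the lemma invoked for minimality (namely, closure of $X$ under normal subgroups) are met, but this is automatic by \cite[Theorem~1.4]{BKKO}.
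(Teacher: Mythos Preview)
Your proposal is correct and is precisely the argument the paper intends: the corollary is stated without proof as an immediate consequence of Proposition~\ref{prop:cstarsimple-residual} together with the radical/residual duality of Section~\ref{sec:radical}, and you have correctly identified all the ingredients. (A minor quibble: the minimality lemma you invoke is in fact numbered---it merely carries no \texttt{\textbackslash label} in the source---and it is the second-to-last, not the last, lemma of Section~\ref{sec:radical}.)
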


We will now describe the amenablish radical in terms of the Furstenberg boundary.

\begin{definition}\label{def:AH-radical}
Let $G$ be any group.
Set $N_0=\{e\}$ and $N_1=\operatorname{int}(G\curvearrowright\partial_F G)$, recall \eqref{eq:interior},
and moreover, for every ordinal $\alpha$, define a normal subgroup $N_{\alpha+1}$ of $G$ by
\[
N_{\alpha+1}/N_\alpha=\operatorname{int}(G/N_\alpha\curvearrowright\partial_F(G/N_\alpha)),
\]
and for every limit ordinal $\beta$, set $N_\beta=\bigcup_{\alpha<\beta}N_\alpha$, which is clearly also normal in $G$.
Then $\{N_\alpha\}_\alpha$ is an ascending normal series of $G$ which eventually stabilizes,
and we finally set $AH(G)=\bigcup_\alpha N_\alpha$.
\end{definition}

\begin{lemma}\label{lem:AH-quotient}
For any group $G$, the quotient $G/AH(G)$ is $C^*$-simple.
\end{lemma}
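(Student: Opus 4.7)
The plan is to show that when the transfinite ascending series $\{N_\alpha\}$ stabilizes, the terminal quotient $Q := G/AH(G)$ acts freely on its Furstenberg boundary, and then invoke the Kalantar--Kennedy characterization of $C^*$-simplicity.

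First I would verify the standard fact that the series stabilizes. Each $N_\alpha$ is a subgroup of the fixed ambient group $G$; along the well-ordered sequence, as long as $N_{\alpha+1} \supsetneq N_\alpha$ we continue strictly ascending, and at limit ordinals we take unions. Since there are only set-many subgroups of $G$, transfinite induction yields some ordinal $\lambda$ at which $N_{\lambda+1} = N_\lambda$. By the recursive definition this means
\[
\operatorname{int}\bigl(Q \curvearrowright \partial_F Q\bigr) \;=\; N_{\lambda+1}/N_\lambda \;=\; \{e\},
\]
where $Q = G/N_\lambda = G/AH(G)$.

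Next I would translate this triviality of the interior into freeness of the action. By \cite[Lemma~3.3]{BKKO}, for every $g \in Q$ the fixed-point set $(\partial_F Q)^g$ is clopen in $\partial_F Q$. If $g \neq e$, then the vanishing of the interior forces $(\partial_F Q)^g$ to have empty interior; a clopen set with empty interior is itself empty, so $(\partial_F Q)^g = \varnothing$. Hence $Q$ acts freely on $\partial_F Q$.

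Finally, by \cite[Theorem~6.2]{KK}, freeness of the $Q$-action on $\partial_F Q$ is equivalent to $Q$ being $C^*$-simple, which gives the claim. The only real obstacle is the first step, namely making precise that the transfinite sequence terminates; this is a soft cardinality argument, and the rest is an immediate consequence of the clopen-fixed-point-set observation together with the known boundary characterization of $C^*$-simplicity.
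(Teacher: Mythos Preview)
Your proposal is correct and follows essentially the same approach as the paper: both arguments use that stabilization of the series $\{N_\alpha\}$ forces $\operatorname{int}(Q\curvearrowright\partial_F Q)$ to be trivial, then combine the clopen-fixed-point-set fact \cite[Lemma~3.3]{BKKO} with the freeness characterization \cite[Theorem~6.2]{KK} to conclude $C^*$-simplicity. The paper phrases this as a contrapositive (``if $G/N_\beta$ were not $C^*$-simple, then $N_{\beta+1}/N_\beta$ would be nontrivial''), whereas you spell out the direct implication more carefully, but the content is the same.
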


\begin{proof}
Let $\{N_\alpha\}_\alpha$ be as in Definition~\ref{def:AH-radical}.
Then there exists some ordinal $\beta$ such that $AH(G)=N_\beta$.
If $G/N_\beta$ is not $C^*$-simple,
then $N_{\beta+1}/N_\beta=(\partial_F(G/N_\beta))^{gN_\beta}\neq\varnothing$ is nontrivial.
Hence, $N_\beta\subsetneq N_{\beta+1}$, contradicting the definition of $AH(G)$.
\end{proof}

Note that $G/\operatorname{int}(G\curvearrowright\partial_F G)$ is not necessarily $C^*$-simple, i.e.,
$AH(G)$ is in general bigger than $\operatorname{int}(G\curvearrowright\partial_F G)$.
Indeed, it was explained to us by Adrien Le~Boudec that by applying \cite[Theorem~1.11]{Boudec-Bon},
one can construct an example $G=G(F,F')$ such that $\operatorname{int}(G\curvearrowright\partial_F G)$ has index two in $G$
(the condition is that $F'$ is generated by its point stabilizers).
We refer to \cite{Boudec2,Boudec,Boudec-Bon} for more about this construction.

\begin{lemma}\label{lem:simple-quotient}
Suppose that $N$ is a normal subgroup of a group $G$ such that $G/N$ is $C^*$-simple.
Then $AH(G)\subseteq N$.
\end{lemma}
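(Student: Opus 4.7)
The plan is to prove, by transfinite induction on $\alpha$, that $N_\alpha \subseteq N$ for every term of the ascending normal series from Definition~\ref{def:AH-radical}; the conclusion $AH(G) = \bigcup_\alpha N_\alpha \subseteq N$ then follows immediately. The base case $N_0 = \{e\} \subseteq N$ is trivial, and the limit step is immediate since a union of subsets of $N$ is a subset of $N$, so the only real content is the successor step.

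For the successor step, assume $N_\alpha \subseteq N$, so that the quotient map factors as $G/N_\alpha \twoheadrightarrow G/N$. Since $G/N$ is $C^*$-simple, \cite[Theorem~6.2]{KK} (used in the same way as in the proof of Proposition~\ref{prop:cstarsimple-residual}) gives that $G/N$ acts freely on $\partial_F(G/N)$. Pulling this action back through the quotient $G/N_\alpha \twoheadrightarrow G/N$, the space $\partial_F(G/N)$ becomes a $(G/N_\alpha)$-space; minimality and strong proximality are preserved under such pullbacks through surjections, so $\partial_F(G/N)$ is a $(G/N_\alpha)$-boundary. Universality of the Furstenberg boundary then yields a $(G/N_\alpha)$-equivariant continuous surjection
\[
\pi \colon \partial_F(G/N_\alpha) \longrightarrow \partial_F(G/N).
\]

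Now pick $gN_\alpha \in G/N_\alpha$ such that $(\partial_F(G/N_\alpha))^{gN_\alpha}$ has nonempty interior. By \cite[Lemma~3.3]{BKKO} this fixed-point set is clopen, hence nonempty; choose $x$ in it. Then $\pi(x)$ is fixed by the image of $g$ in $G/N$, and freeness of the $(G/N)$-action on $\partial_F(G/N)$ forces $g \in N$, i.e., $gN_\alpha \in N/N_\alpha$. Since $N_{\alpha+1}/N_\alpha = \operatorname{int}(G/N_\alpha \curvearrowright \partial_F(G/N_\alpha))$ is, by definition, generated by exactly the elements with nonempty-interior fixed-point set, we obtain $N_{\alpha+1}/N_\alpha \subseteq N/N_\alpha$, hence $N_{\alpha+1} \subseteq N$, which closes the induction.

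The only delicate point is verifying that $\partial_F(G/N)$ really is a $(G/N_\alpha)$-boundary after pullback, so that universality can be invoked to produce $\pi$; once this is set up, the rest is a routine application of freeness of the action of a $C^*$-simple group on its Furstenberg boundary together with the clopen-fixed-point observation of \cite[Lemma~3.3]{BKKO}.
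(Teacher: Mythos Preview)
Your proof is correct and follows essentially the same route as the paper: transfinite induction along the series $\{N_\alpha\}$, with the successor step handled by pushing fixed points forward along a $(G/N_\alpha)$-equivariant map $\partial_F(G/N_\alpha)\to\partial_F(G/N)$ and invoking freeness of the $(G/N)$-action on its Furstenberg boundary. The only cosmetic difference is that the paper packages the existence of this equivariant map into Lemma~\ref{lem:boundary-subquotient} (applied with the outer group equal to $G/N_\alpha$), whereas you unwind that lemma inline via the pullback-and-universality argument; also, your appeal to \cite[Lemma~3.3]{BKKO} is harmless but unnecessary, since a set with nonempty interior is already nonempty without knowing it is clopen.
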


\begin{proof}
The action of $G/N$ on $X=\partial_F(G/N)$ is free by \cite[Theorem~6.2]{KK}.
Pick $g\in\operatorname{int}(G\curvearrowright\partial_F G)$ such that $(\partial_F G)^g\neq\varnothing$.
By Lemma~\ref{lem:boundary-subquotient}, we have $X^{gN}\neq\varnothing$, which means that $gN$ is trivial in $G/N$, i.e., $g\in N$.
Since the set of all $g$ with $(\partial_F G)^g\neq\varnothing$ generates $\operatorname{int}(G\curvearrowright\partial_F G)$,
it follows that $\operatorname{int}(G\curvearrowright\partial_F G)\subseteq N$.

We continue by transfinite induction.
Let $\{N_\alpha\}_\alpha$ be the series from Definition~\ref{def:AH-radical} associated with $G$.
We have shown that $N_1\subseteq N$.
Assume next that $N_\alpha\subseteq N$ for some ordinal $\alpha$ and note that there is a quotient map $G/N_\alpha\to G/N$.
Choose $g\in N_{\alpha+1}$ such that $(\partial_F(G/N_\alpha))^{gN_\alpha}\neq\varnothing$.
Then the same argument as above gives that $X^{gN}\neq\varnothing$, so $g\in N$.
Hence, we conclude that $N_{\alpha+1}\subseteq N$.
Finally, if $\beta$ is a limit ordinal and $N_\alpha\subseteq N$ for all $\alpha<\beta$, then clearly $N_\beta\subseteq N$.
\end{proof}

\begin{lemma}\label{lem:normal-aish}
Let $N$ be a normal subgroup of $G$.
Then $AH(N)=AH(G)\cap N$.
\end{lemma}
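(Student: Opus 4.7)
I would prove the two inclusions $AH(N)\subseteq AH(G)\cap N$ and $AH(G)\cap N\subseteq AH(N)$ separately. The easy inclusion follows quickly: $N/(N\cap AH(G))\cong NAH(G)/AH(G)$ is a normal subgroup of the $C^*$-simple group $G/AH(G)$ (by Lemma~\ref{lem:AH-quotient}), and since normal subgroups of $C^*$-simple groups are $C^*$-simple by \cite[Theorem~1.4]{BKKO}, the quotient $N/(N\cap AH(G))$ is $C^*$-simple; Lemma~\ref{lem:simple-quotient} applied to $N$ with kernel $N\cap AH(G)$ then gives $AH(N)\subseteq N\cap AH(G)$.

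For the reverse inclusion $AH(G)\cap N\subseteq AH(N)$, I would proceed by transfinite induction on $\alpha$, showing that $N\cap N_\alpha\subseteq AH(N)$ for every ordinal $\alpha$, where $\{N_\alpha\}_\alpha$ is the series from Definition~\ref{def:AH-radical}; taking the union then yields the desired inclusion. The base case $\alpha=0$ and the limit case are immediate. For the successor step, assume the inductive hypothesis $N\cap N_\alpha\subseteq AH(N)$. Since $AH(N)$ is characteristic in $N$ and therefore normal in $G$, the subgroup $L:=AH(N)N_\alpha/N_\alpha$ is normal in $G':=G/N_\alpha$. Setting $N':=NN_\alpha/N_\alpha$, the modular law together with the inductive hypothesis yields $N'/L\cong N/AH(N)$, which is $C^*$-simple by Lemma~\ref{lem:AH-quotient} and hence acts freely on its Furstenberg boundary by \cite[Theorem~6.2]{KK}. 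Applying Lemma~\ref{lem:boundary-subquotient} to the triple $(G',N',L)$, every $g\in N'$ with $(\partial_F G')^g\neq\varnothing$ is forced to lie in $L$. Following the generator-by-generator argument from the proof of Lemma~\ref{lem:simple-quotient}, one obtains $N\cap N_{\alpha+1}\subseteq AH(N)N_\alpha\cap N=AH(N)(N\cap N_\alpha)=AH(N)$, where the last equality uses the inductive hypothesis.

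The hard part will be the passage from generators to the full intersection in the successor step. Unlike Lemma~\ref{lem:simple-quotient}, where the role of $N'$ is played by all of $G'$ and ``lying in $N'$'' is a subgroup condition automatically preserved under products, here $N'$ is a proper normal subgroup of $G'$; one must show $N'\cap N_{\alpha+1}/N_\alpha\subseteq L$ from the boundary-subquotient estimate, which a priori only controls elements of $N'\cap S$, where $S=\{g\in G':(\partial_F G')^g\neq\varnothing\}$ is the conjugation-invariant generating set of $N_{\alpha+1}/N_\alpha$. Making this step rigorous would exploit the conjugation-invariance of $S$ (so that $N_{\alpha+1}/N_\alpha$ is the normal closure of $S$ in $G'$) together with the freeness of the $N'/L$-action on $\partial_F(N'/L)$, which prevents any nontrivial element of $N'$ from falling outside $L$.
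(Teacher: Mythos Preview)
Your proof parallels the paper's closely. The easy inclusion $AH(N)\subseteq AH(G)\cap N$ is handled identically. For the reverse inclusion you run the same transfinite induction along the $G$-series $\{N_\alpha\}$, with the minor variation that you target $AH(N)$ directly rather than the corresponding term $H_\alpha$ of the $N$-series as the paper does; this is a legitimate simplification (since $H_\alpha\subseteq AH(N)$), and it lets you invoke the $C^*$-simplicity of $N/AH(N)$ via Lemma~\ref{lem:AH-quotient} and freeness on its Furstenberg boundary, in place of tracking both series simultaneously.

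You are right to flag the passage from generators to the full intersection as the crux, and you should know that the paper makes exactly the same leap without comment: from ``$g\in N$ with $(\partial_F G)^g\neq\varnothing$ lies in $H_1$'' it jumps to ``$\operatorname{int}(G\curvearrowright\partial_F G)\cap N\subseteq H_1$''. However, your proposed resolution does not close the gap. Conjugation-invariance of $S$ does not force $\langle S\rangle\cap N'$ to be generated by $S\cap N'$ --- already in $G'=S_3$, $N'=A_3$, $S=\{\text{transpositions}\}$ one has $\langle S\rangle\cap N'=A_3$ while $S\cap N'=\varnothing$. And freeness of the $N'/L$-action on $\partial_F(N'/L)$ only tells you that an element of $N'$ with a fixed point lies in $L$; it says nothing about a product $s_1\cdots s_k\in N'$ whose factors $s_i\in S\setminus N'$ have fixed points in $\partial_F(N'/L)$ only via the \emph{extended} $G'$-action. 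So as written, neither your sketch nor the paper's one-line justification supplies the missing mechanism, and your final sentence does not point to one.
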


\begin{proof}
Pick $g\in \operatorname{int}(G\curvearrowright\partial_F G)\cap N$ such that $(\partial_F G)^g\neq\varnothing$.
It follows from Lemma~\ref{lem:boundary-subquotient} that $(\partial_F N)^g\neq\varnothing$,
so $g\in\operatorname{int}(N\curvearrowright\partial_F N)$.
Since the set of all $g$ with $(\partial_F G)^g\neq\varnothing$ generates $\operatorname{int}(G\curvearrowright\partial_F G)$,
we get that $\operatorname{int}(G\curvearrowright\partial_F G)\cap N\subseteq\operatorname{int}(N\curvearrowright\partial_F N)$.

We continue by transfinite induction.
Let $\{N_\alpha\}_\alpha$ and $\{H_\alpha\}_\alpha$ be the series from Definition~\ref{def:AH-radical} associated with $G$ and $N$,
respectively.
We have shown that $N_1\cap N\subseteq H_1$.
Let $\alpha$ be an ordinal number and assume that $N_\alpha\cap N\subseteq H_\alpha$.
Note that $N/(N\cap N_\alpha)\cong (NN_\alpha)/N_\alpha$ is a normal subgroup of $G/N_\alpha$,
and that $N/H_\alpha$ is a quotient of $N/(N\cap N_\alpha)$.
Choose $g\in N_{\alpha+1}\cap N$ such that $\partial_F(G/N_\alpha)^{gN_\alpha}\neq\varnothing$.
Then by Lemma~\ref{lem:boundary-subquotient} we have $(\partial_F(N/H_\alpha))^{gH_\alpha}\neq\varnothing$.
Hence, $gH_\alpha\in\operatorname{int}(N/H_\alpha\curvearrowright\partial_F(N/H_\alpha))=H_{\alpha+1}/H_\alpha$,
and it follows that $N_{\alpha+1}\cap N\subseteq H_{\alpha+1}$.
Finally, if $\beta$ is a limit ordinal and $N_\alpha\cap N\subseteq H_\alpha$ for all $\alpha<\beta$,
then clearly $N_\beta\cap N\subseteq H_\beta$.
Thus, we have $AH(G)\cap N\subseteq AH(N)$.

For the opposite inclusion, note that $G/AH(G)$ is $C^*$-simple by Lemma~\ref{lem:AH-quotient}, and that $(AH(G)N)/AH(G)$ is normal in $G/AH(G)$,
so $N/(AH(G)\cap N)\cong (AH(G)N)/AH(G)$ is $C^*$-simple by using \cite[Theorem~1.4]{BKKO}.
Hence, Lemma~\ref{lem:simple-quotient} gives that $AH(N)\subseteq AH(G)\cap N$.
\end{proof}

\begin{proposition}\label{prop:aish-rad}
For any group $G$, the amenablish radical of $G$ coincides with $AH(G)$.
\end{proposition}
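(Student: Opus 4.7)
The plan is to use the three lemmas (\ref{lem:AH-quotient}, \ref{lem:simple-quotient}, \ref{lem:normal-aish}) already established in tandem with \cite[Theorem~1.4]{BKKO} to verify both that $AH(G)$ is a normal amenablish subgroup and that it is the largest such subgroup.

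First I would verify that $AH(G)$ is itself amenablish. The crucial observation is an ``idempotence'' property: applying Lemma~\ref{lem:normal-aish} to the normal subgroup $AH(G) \triangleleft G$ yields
\[
AH(AH(G)) = AH(G) \cap AH(G) = AH(G).
\]
Now suppose $M$ is a normal subgroup of $AH(G)$ such that $AH(G)/M$ is $C^*$-simple. Applying Lemma~\ref{lem:simple-quotient} with the group $AH(G)$ in place of $G$ gives $AH(AH(G)) \subseteq M$, hence $AH(G) \subseteq M$, so $M = AH(G)$. This shows $AH(G)$ has no nontrivial $C^*$-simple quotient, i.e., $AH(G)$ is amenablish. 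It is normal in $G$ by construction.

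Next I would show $AH(G)$ contains every normal amenablish subgroup. Let $N$ be a normal amenablish subgroup of $G$. Consider the image of $N$ in $G/AH(G)$, which is $(N \cdot AH(G))/AH(G) \cong N/(N \cap AH(G))$. Since $G/AH(G)$ is $C^*$-simple by Lemma~\ref{lem:AH-quotient} and normal subgroups of $C^*$-simple groups are $C^*$-simple by \cite[Theorem~1.4]{BKKO}, this quotient of $N$ is $C^*$-simple. But $N$ is amenablish, so the only $C^*$-simple quotient is the trivial one, forcing $N \subseteq AH(G)$.

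Combining the two directions, $AH(G)$ is the unique maximal normal amenablish subgroup of $G$, i.e., the amenablish radical. The main subtlety is the first step: one needs to recognize that Lemma~\ref{lem:normal-aish} is strong enough to give $AH(AH(G)) = AH(G)$, which would otherwise require a separate transfinite argument mirroring Definition~\ref{def:AH-radical}. Once this is in hand, everything else is a direct application of the preceding lemmas and \cite[Theorem~1.4]{BKKO}.
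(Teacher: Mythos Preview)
Your proof is correct, and the second half (maximality) is essentially the same as the paper's, only phrased via the image in $G/AH(G)$ and \cite[Theorem~1.4]{BKKO} rather than via Lemma~\ref{lem:normal-aish} applied to $L$ followed by Lemma~\ref{lem:AH-quotient}.

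The first half, however, differs genuinely from the paper. The paper argues directly that $M=\operatorname{int}(G\curvearrowright\partial_F G)$ is amenablish using Lemma~\ref{lem:boundary-subquotient}, then runs a transfinite induction along the series $\{N_\alpha\}$ of Definition~\ref{def:AH-radical}: each successive quotient $N_{\alpha+1}/N_\alpha$ is amenablish for the same reason, and the radical class is closed under extensions and increasing unions, so $AH(G)$ is amenablish. You bypass this entirely by observing that Lemma~\ref{lem:normal-aish} with $N=AH(G)$ gives $AH(AH(G))=AH(G)$, after which Lemma~\ref{lem:simple-quotient} (applied to the group $AH(G)$) forces any $C^*$-simple quotient of $AH(G)$ to be trivial. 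This is shorter and conceptually cleaner: the transfinite work has already been absorbed into the proof of Lemma~\ref{lem:normal-aish}, and you are simply harvesting it. The paper's route, on the other hand, is more self-contained in that it shows amenablishness of $AH(G)$ without invoking Lemma~\ref{lem:normal-aish}, and in fact establishes the stronger intermediate fact that every $N_\alpha$ (not just the union) is amenablish.
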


\begin{proof}
We need to show that $AH(G)$ is amenablish,
and that it contains all normal amenablish subgroups of $G$.

Set $M=\operatorname{int}(G\curvearrowright\partial_F G)$.
Suppose first that $L$ is a normal subgroup of $M$ such that $L\neq M$.
Pick $g\in M\setminus L$ so that $(\partial_F G)^g\neq\varnothing$.
It follows from Lemma~\ref{lem:boundary-subquotient} that $(\partial_F(M/L))^{gL}\neq\varnothing$, so $M/L$ is not $C^*$-simple.
Hence, $M$ is amenablish.

Let $\{N_\alpha\}_\alpha$ be the series from Definition~\ref{def:AH-radical} associated with $G$.
Then it follows that $N_{\alpha+1}/N_\alpha$ is amenablish for every ordinal $\alpha$,
by using the same argument as above with $G/N_\alpha$ in place of $G$.
Since the class of amenablish groups is radical,
it is closed under extensions and under increasing unions of normal subgroups.
Since $N_1$ is amenablish, an argument by transfinite induction gives that $N_\alpha$ is amenablish for every ordinal $\alpha$.
Hence, $AH(G)$ is amenablish.

Next, let $L$ be an amenablish normal subgroup of $G$, and assume that $L$ is not contained in $AH(G)$.
Set $K=L\cap AH(G)$, then $K\neq L$ and $K=AH(L)$ by Lemma~\ref{lem:normal-aish}.
Hence, $L/K$ is $C^*$-simple by Lemma~\ref{lem:AH-quotient}.
\end{proof}

\begin{lemma}\label{lem:normal aish}
The class of amenablish groups is closed under normal subgroups.
\end{lemma}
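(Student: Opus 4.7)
The plan is to deduce this directly from the material already developed, in particular Lemma~\ref{lem:normal-aish} and Proposition~\ref{prop:aish-rad}. Indeed, amenablish-ness of a group $G$ is equivalent to the statement that its amenablish radical equals all of $G$, which by Proposition~\ref{prop:aish-rad} is the same as saying $AH(G)=G$.

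So let $G$ be an amenablish group and let $N$ be a normal subgroup of $G$. First I would observe that by Proposition~\ref{prop:aish-rad}, the assumption on $G$ amounts to $AH(G)=G$. Then I would apply Lemma~\ref{lem:normal-aish}, which gives the identity
\[
AH(N)=AH(G)\cap N = G\cap N = N.
\]
Invoking Proposition~\ref{prop:aish-rad} once more, this says precisely that $N$ is amenablish, which is what we wanted.

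There is essentially no obstacle: all the work has already been absorbed into Lemma~\ref{lem:normal-aish} (which was itself the transfinite-induction argument on the series $\{N_\alpha\}$ from Definition~\ref{def:AH-radical}, together with the ``downward'' use of \cite[Theorem~1.4]{BKKO}) and into Proposition~\ref{prop:aish-rad} (which identified $AH(G)$ with the amenablish radical). The only point worth a small comment is the philosophical one noted earlier in the section, namely that although radical classes are not \emph{a priori} closed under normal subgroups, in this particular case one gets closure for free from the explicit description of the radical via boundary actions.
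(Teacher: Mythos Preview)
Your proof is correct and is essentially the paper's own argument. The paper's proof cites Lemma~\ref{lem:normal radical}, Lemma~\ref{lem:normal-aish}, and Proposition~\ref{prop:aish-rad}; you use the latter two and, in place of citing Lemma~\ref{lem:normal radical}, simply write out its trivial direction (if $\rho(G)=G$ and $\rho(N)=\rho(G)\cap N$, then $\rho(N)=N$), which is exactly the one-line computation you gave.
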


\begin{proof}
This is a direct consequence of Lemma~\ref{lem:normal radical}, Lemma~\ref{lem:normal-aish}, and Proposition~\ref{prop:aish-rad}.
\end{proof}

\begin{lemma}\label{lem:finite index aish}
Let $G$ be any group and $H$ a subgroup of finite index.
Then $G$ is amenablish if and only if $H$ is amenablish.
\end{lemma}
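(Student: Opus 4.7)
The plan is to deduce this statement as an immediate corollary of Lemma~\ref{lem:finite-index}, applied to the class $X$ of amenablish groups. To do this I must verify the three hypotheses of that lemma: $X$ is closed under normal subgroups, $X$ is closed under extensions, and $X$ contains all finite groups.

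First, closure under normal subgroups is exactly the content of Lemma~\ref{lem:normal aish}, which has just been established. Second, closure under extensions is automatic from the fact that amenablish groups form a radical class: the proposition following Definition~\ref{def:radical-residual} asserts that every radical class is closed under extensions. Third, every finite group is amenable, hence amenablish, since amenable groups have no nontrivial $C^*$-simple quotients (a $C^*$-simple group has trivial amenable radical and is therefore nonamenable unless trivial).

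With these three properties in hand, Lemma~\ref{lem:finite-index} applies verbatim to give $G \in X$ if and only if $H \in X$, which is the desired equivalence. I expect no obstacle here: all of the real work has been done in the previous section on radical and residual classes and in Lemma~\ref{lem:normal aish}; this statement is simply the packaging of those results into the concrete form useful for applications. The proof will consist of essentially two or three sentences verifying the hypotheses and citing Lemma~\ref{lem:finite-index}.
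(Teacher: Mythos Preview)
Your proposal is correct and matches the paper's proof essentially line for line: the paper simply notes that the class of amenablish groups is closed under normal subgroups, closed under extensions, and contains all finite groups, then invokes Lemma~\ref{lem:finite-index}. Your justifications for each of the three hypotheses are accurate and, if anything, slightly more detailed than what the paper writes.
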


\begin{proof}
The class of amenablish groups is closed under normal subgroups, extensions, and contains all finite groups.
Hence, Lemma~\ref{lem:finite-index} applies.
\end{proof}

\begin{corollary}\label{cor:gamma-amenablish}
The group $\Gamma$ of Section~\ref{sec:example} is amenablish, but not amenable (and has trivial amenable radical).
\end{corollary}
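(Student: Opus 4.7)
The plan is to dispatch the three assertions separately. Non-amenability and triviality of the amenable radical both follow at once from material already in Section~\ref{sec:example}: the paper records the nonabelian free subgroup $\langle g_0h(1),g_1h(0)\rangle\cong\Z_3*\Z_3$ of $\Gamma$, so $\Gamma$ is not amenable; and Theorem~\ref{thm:gamma-nonsimple} furnishes the unique trace property, which by \cite[Theorem~1.3]{BKKO} is the same as having trivial amenable radical.

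The substantive claim is that $\Gamma$ is amenablish. My plan is to leverage the simple, finite-index subgroup $\Gamma'=\ker\theta$ produced by Proposition~\ref{prop:simple-by-finite}. By Lemma~\ref{lem:finite index aish}, $\Gamma$ is amenablish if and only if $\Gamma'$ is; since $\Gamma'$ is simple, its only nontrivial quotient is $\Gamma'$ itself, so being amenablish reduces to the single assertion that $\Gamma'$ is not $C^*$-simple. I would prove this by contradiction.

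Assume for contradiction that $\Gamma'$ is $C^*$-simple and consider $C_\Gamma(\Gamma')$, which is automatically normal in $\Gamma$. Its intersection with $\Gamma'$ equals $Z(\Gamma')$, which is either $\{e\}$ or $\Gamma'$ by simplicity. A quick check on the generators of the free subgroup above shows $\theta(g_0h(1))=\theta(g_1h(0))=(1,1)$, so $\langle g_0h(1),g_1h(0)\rangle\subseteq\Gamma'$; in particular $\Gamma'$ is nonabelian and $Z(\Gamma')=\{e\}$. Thus $C_\Gamma(\Gamma')$ injects into the finite quotient $\Gamma/\Gamma'\cong(\Z/2\Z)^2$, hence is finite and therefore amenable. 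The paragraph above gives that the amenable radical of $\Gamma$ is trivial, forcing $C_\Gamma(\Gamma')=\{e\}$. Applying \cite[Theorem~1.4]{BKKO} to $\Gamma'\triangleleft\Gamma$ (which is $C^*$-simple with trivial centralizer in $\Gamma$) then yields $\Gamma$ itself $C^*$-simple, contradicting Theorem~\ref{thm:gamma-nonsimple}. Hence $\Gamma'$ is not $C^*$-simple, so amenablish, so $\Gamma$ is amenablish.

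I do not anticipate a genuine obstacle: the argument is just a gluing of Lemma~\ref{lem:finite index aish}, the simplicity of $\Gamma'$, triviality of the amenable radical, and the normal-subgroup centralizer criterion from \cite[Theorem~1.4]{BKKO}; the only concrete computation is the one-line verification that the two free generators lie in $\Gamma'$.
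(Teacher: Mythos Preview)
Your proposal is correct and follows essentially the same route as the paper: reduce to the finite-index simple subgroup $\Gamma'$, show $\Gamma'$ is not $C^*$-simple (hence amenablish by simplicity), and then invoke Lemma~\ref{lem:finite index aish}. The only difference is in the step showing $\Gamma'$ is not $C^*$-simple: the paper dispatches this in one line by citing the finite-index stability result \cite[Proposition~19~(iv)]{Harpe2}, whereas you reprove the needed instance via the centralizer criterion \cite[Theorem~1.4]{BKKO} together with triviality of the amenable radical. Your argument is slightly longer but self-contained within the tools already used in the paper; the paper's version is shorter but requires the additional external reference.
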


\begin{proof}
By Theorem~\ref{thm:gamma-nonsimple}, the group $\Gamma$ is not $C^*$-simple, but has the unique trace property,
so it has trivial amenable radical and is icc.
The normal subgroup $\Gamma'$ from Proposition~\ref{prop:simple-by-finite} is not $C^*$-simple either,
because it has finite index in $\Gamma$ (see \cite[Proposition~19~(iv)]{Harpe2}).
Since $\Gamma'$ is simple and $AH(\Gamma')\neq\{e\}$, we must have $AH(\Gamma')=\Gamma'$, that is, $\Gamma'$ is amenablish.
Hence, it follows from Lemma~\ref{lem:finite index aish} that $\Gamma$ is amenablish.
\end{proof}

\begin{remark}
The class of amenablish groups is not closed under subgroups.
Indeed, by Corollary~\ref{cor:gamma-amenablish} the group $\Gamma$ is amenablish,
but it contains as subgroup a nonabelian free group, which is $C^*$-simple, and thus not amenablish.

Moreover, $AH(\Gamma)=\Gamma$ and $NF(\Gamma)=\{e\}$,
while recent work by Olshanskii and Osin \cite{O-O} presents a group $G$ with the property $AH(G)=\{e\}$ and $NF(G)=G$.
Hence, it seems to be no relation between the class of amenablish groups and $NF$ (except that both contain all amenable groups).
\end{remark}

\begin{remark}\label{AG}
Let $G$ be a countable group, and let $N$ be the subgroup of $G$ generated by all recurrent amenable subgroups in $G$ (see \cite[Definition~5.1]{Kennedy}). Does $N$ coincide with $AH(G)$?

It is mentioned in \cite[Remark~5.4]{Kennedy} that for every $x\in\partial_F G$ the subgroup $G_x$ is recurrent amenable in $G$,
so $\operatorname{int}(G\curvearrowright\partial_F G)=\langle G_x : x\in\partial_F G\rangle\subseteq N$.
Moreover, \cite[Theorem~1.1]{Kennedy} says that $N$ is trivial if and only if $G$ is $C^*$-simple,
that is, if and only if $\operatorname{int}(G\curvearrowright\partial_F G)$ is trivial.

Note also that a recent paper \cite[Section~4]{Boudec-Bon} introduces a unique maximal amenable uniformly recurrent subgroup $\mathcal{A}_G$ of $G$,
and \cite[Proposition~2.21~(ii)]{Boudec-Bon} states that $\langle H:H\in\mathcal{A}_G\rangle=\operatorname{int}(G\curvearrowright\partial_F G)$,
which in general is smaller than $AH(G)$, cf.\ comment after Lemma~\ref{lem:AH-quotient}.

\end{remark}

\begin{example}
If $G$ is a simple group, then $G$ is either $C^*$-simple or amenablish.
E.g.\ Thompson's group $T$ is known to be simple,
so it follows directly from \cite{Boudec-Bon,Haagerup-Olesen} that $T$ is amenablish if and only if Thompson's group $F$ is amenable.
\end{example}

\section*{Acknowledgements}

The second author is funded by the Research Council of Norway through FRINATEK, project no.~240913.
Part of the work of the second author was conducted during a research stay at Arizona State University, Fall 2015,
and another part when attending the research program ``Classification of operator algebras: complexity, rigidity, and dynamics''
at the Mittag-Leffler Institute outside Stockholm, January--February 2016.
In both cases, he would like to thank the hosts for providing excellent working conditions.
The second author is also grateful to Erik B\'edos for useful feedback on an early version of the article,
and to Adrien Le~Boudec for helpful e-mail correspondence at a later stage.
Finally, the authors would like to thank the referee for several valuable comments.

\section*{Update November 2017}

This paper was published in Journal of Functional Analysis in 2017 (272(9):3712--3741).
After that, it was pointed out to us by Rasmus Sylvester Bryder that we had incorrectly assumed that the boundary of the Bass-Serre tree is always compact,
and therefore the statement of Lemma~\ref{lem:T-boundary} was incorrect.
This did not cause any major problems, and the proof of the only result that depended on it, Theorem~\ref{thm:K0K1-amenable},
was easily fixable (it even turned out that the result can be generalized, see \cite[Theorem~3.9]{BIO}).
We still decided to reformulate certain paragraphs of Section~\ref{sec:actions} to accomodate for this mistake,
by inserting a new proposition and modify Lemma~\ref{lem:T-boundary} and the proof of Theorem~\ref{thm:K0K1-amenable}.
At the same time, we also inserted a new lemma used to clarify the proof of Lemma~\ref{lem:interior},
fixed an inaccuracy in the proof of Theorem~\ref{thm:trivial-Ks}, corrected some typos, and updated the reference list.

\bibliographystyle{plain}

\end{document}